\tikzstyle{vertex}=[circle, fill=black, draw, inner sep=0pt, minimum size=5pt]
\newcommand{\vertex}{\node[vertex]}
\DeclareMathOperator{\thin}{thin}
\DeclareMathOperator{\pthin}{pthin}
\DeclareMathOperator{\lmimw}{lmimw}
\DeclareMathOperator{\pw}{pw}
\DeclareMathOperator{\bw}{bw}
\newtheorem{theorem}{Theorem}[section]
\newtheorem{corollary}{Corollary}[section]
\newtheorem{proposition}{Proposition}[section]
\newtheorem{remark}{Remark}[section]
\newtheorem{definition}{Definition}[section]
\def\ps@pprintTitle{%
  \let\@oddhead\@empty
  \let\@evenhead\@empty
  \let\@oddfoot\@empty
  \let\@evenfoot\@oddfoot
}
\begin{document}

\begin{frontmatter}



\title{Trees with proper thinness 2}


\author[1,2]{Flavia Bonomo-Braberman} \ead{fbonomo@dc.uba.ar}

\author[1,2]{Ignacio Maqueda} \ead{imaqueda@dc.uba.ar} 

\author[3]{Nina Pardal} \ead{n.pardal@hud.ac.uk}

\address[1]{Universidad de Buenos Aires, Facultad de Ciencias Exactas y Naturales. Departamento de Computación, Buenos Aires, Argentina} 

\address[2]{CONICET-Universidad de Buenos Aires, Instituto de Investigación en Ciencias de la Computación (ICC), Buenos Aires, Argentina} 

\address[3]{University of Huddersfield, Queensgate, Huddersfield, United Kingdom}

\begin{abstract}
The proper thinness of a graph is an invariant that generalizes the concept of a proper interval graph. Every graph has a numerical value of proper thinness and the graphs with proper thinness~1 are exactly the proper interval graphs. A graph is proper $k$-thin if its vertices can be ordered in such a way that there is a partition of the vertices into $k$ classes satisfying that for each triple of vertices $r < s < t$, such that there is an edge between $r$ and $t$, it is true that if $r$ and $s$ belong to the same class, then there is an edge between $s$ and $t$, and if $s$ and $t$ belong to the same class, then there is an edge between $r$ and $s$. The proper thinness is the smallest value of $k$ such that the graph is proper $k$-thin. In this work we focus on the calculation of proper thinness for trees. We characterize trees of proper thinness~2, both structurally and by their minimal forbidden induced subgraphs. The characterizations obtained lead to a polynomial-time recognition algorithm. We furthermore show why the structural results obtained for trees of proper thinness~2 cannot be straightforwardly generalized to trees of proper thinness~3.
\end{abstract}

\begin{keyword}
forbidden induced subgraphs \sep proper thinness \sep trees



\end{keyword}

\end{frontmatter}



\section{Introduction}



Interval graphs and proper interval graphs are two of the most studied graph classes, since their introduction, respectively, in~\cite{Haj-int,Benser-int} and~\cite{Rob-uig}. One main reason for this interest is that
many real-world applications involve solving problems on graphs which are either
interval graphs themselves or are related to interval graphs in a natural way. Due to this and to their mathematical
and algorithmic properties, there are several generalizations and graph
parameters motivated by them, some of them already surveyed in~\cite{Golumbic-int,Fishburn-book}, and some others (very) recently defined~\cite{B-H-T-mp,Milanic-sim,H-int-wg}.  Among the graph width parameters that, in some sense, measure how close a graph is to being an interval graph, one finds thinness. This parameter was introduced by Mannino, Oriolo, Ricci, and Chandran in 2007 in the context of applications to cellular telephony, specifically related to the independent set problem~\cite{M-O-R-C-thinness}. Subsequently, thinness was also applied in the field of logistics~\cite{B-M-O-thin-tcs}.

A graph $G = (V, E)$ is \emph{$k$-thin} if there exists an ordering of the vertices $v_1, v_2, \dots, v_n$ and a partition $V^1, \dots, V^k$ of $V$ into $k$ classes such that for every triple $(r, s, t)$ with $r < s < t$, if $v_r, v_s$ are in the same class and $v_t v_r \in E$, then $v_t v_s \in E$.


The definition of 1-thin graphs is known to be a characterization of interval graphs~\cite{Ola-interval,R-PR-interval}, and the minimum $k$ such that $G$ is $k$-thin is called the \emph{thinness} of $G$ and is denoted as $\thin(G)$. When an ordering and a partition satisfy the definition of $k$-thin, we say that the ordering and the partition are \emph{consistent}. 

Most of the aforementioned parameters generalizing interval graphs admit a proper version (e.g.~\cite{CFGKZ21}), even if not always such a parameter was defined or studied. In~\cite{B-D-thinness}, a metatheorem-like result for thinness was obtained, i.e., a description of a large family of problems that can be solved in polynomial time on graphs of bounded thinness. Furthermore, the concept of \emph{proper thinness} was defined there, as well as a larger family of problems that can be solved in polynomial time on graphs of bounded proper thinness. 


A graph $G = (V, E)$ is \emph{proper $k$-thin} if there exists an ordering of the vertices $v_1, v_2, \dots, v_n$ and a partition $V^1, \dots, V^k$ of $V$ into $k$ classes such that for every triple $(r, s, t)$ with $r < s < t$ it holds that:
\begin{itemize}
\item If $v_r, v_s$ belong to the same class and $v_t v_r \in E$, then $v_t v_s \in E$
\item If $v_s, v_t$ belong to the same class and $v_t v_r \in E$, then $v_r v_s \in E$
\end{itemize}

Equivalently, both the ordering and its reverse are consistent with the partition. In this case, we say that they are \emph{strongly consistent}. 
The definition of proper 1-thin graphs is known to be a characterization of proper interval graphs~\cite{Rob-uig}, and the \emph{proper thinness} of a graph $G$, denoted as $\pthin(G)$, is the smallest number $k$ for which the graph is proper $k$-thin.  Examples of 3-thin and proper 3-thin graphs are depicted in Figure~\ref{fig:thinness}. 

\begin{figure}[H]
\begin{center}
    \begin{tabular}{ccc}
\begin{tikzpicture}[scale=0.5]
    \vertex(v1) at (-6,4) {};
    \vertex(v2) at (-6,5.5) {};
    \vertex(v3) at (-6,7) {};
    \vertex(v4) at (-6,8) {};
    
    \vertex(w1) at (-4,5) {};
    \vertex(w2) at (-4,6.5) {};
    \vertex(w3) at (-4,9) {};
    
    \vertex(z1) at (-2,4.5) {};
    \vertex(z2) at (-2,6) {};
    \vertex(z3) at (-2,7.5) {};
    \vertex(z4) at (-2,8.5) {};
    \vertex(z5) at (-2,9.5) {};

    \node[label=above:{$V^1$}] at (-6,2) {};
    \node[label=above:{$V^2$}] at (-4,2) {};
    \node[label=above:{$V^3$}] at (-2,2) {};

    \draw (v4) to [bend right=25] (v2);
    \draw (v4) to (v3);
    
    \draw (w3) to (w2);

    \draw (z4) to [bend left=25] (z2);
    \draw (z4) to (z3);
    \draw (z3) to (z2);

    \draw (v2) to (w1);
    \draw (v3) to (w2);
    \draw (v4) to (w1);
    \draw (v4) to (w2);
    \draw (v1) to (w2);
    \draw (v4) to (w3);
    \draw (v2) to (w2);
    \draw (v1) to (w1);

    \draw (v1) to (z1);
    \draw (v2) to (z2);
    \draw (v2) to [bend left=15] (z1);
    \draw (v1) to [bend left=15] (z2);
    \draw (v2) to [bend left=15] (z3);
    \draw (v4) to (z4);
    \draw (v3) to (z4);
    \draw (v3) to (z3);
    
    \draw (z2) to (w1);
    \draw (z3) to (w2);
    \draw (z5) to (w3);
    \draw (z5) to (w2);
    \draw (z2) to (w2);
    \draw (z1) to (w1);
    \draw (z1) to (w2);

\end{tikzpicture}  & \hspace{1cm} & \begin{tikzpicture}[scale=0.5]
    \vertex(v1) at (-6,4) {};
    \vertex(v2) at (-6,5.5) {};
    \vertex(v3) at (-6,7) {};
    \vertex(v4) at (-6,8) {};
    
    \vertex(w1) at (-4,5) {};
    \vertex(w2) at (-4,6.5) {};
    \vertex(w3) at (-4,9) {};
    
    \vertex(z1) at (-2,4.5) {};
    \vertex(z2) at (-2,6) {};
    \vertex(z3) at (-2,7.5) {};
    \vertex(z4) at (-2,8.5) {};
    \vertex(z5) at (-2,9.5) {};

    \node[label=above:{$V^1$}] at (-6,2) {};
    \node[label=above:{$V^2$}] at (-4,2) {};
    \node[label=above:{$V^3$}] at (-2,2) {};

    \draw (v4) to [bend right=25] (v2);
    \draw (v4) to (v3);
    \draw (v2) to (v3);
    
    \draw (w3) to (w2);

    \draw (z4) to [bend left=25] (z2);
    \draw (z4) to (z3);
    \draw (z3) to (z2);

    \draw (v2) to (w1);
    \draw (v3) to (w2);
    \draw (v4) to (w1);
    \draw (v4) to (w2);
    \draw (v1) to (w2);
    \draw (v4) to (w3);
    \draw (v2) to (w2);
    \draw (v1) to (w1);
    \draw (v3) to (w1);

    \draw (v1) to (z1);
    \draw (v2) to (z2);
    \draw (v2) to [bend left=15] (z1);
    \draw (v1) to [bend left=15] (z2);
    \draw (v2) to [bend left=15] (z3);
    \draw (v4) to (z4);
    \draw (v3) to (z4);
    \draw (v3) to (z3);
    
    \draw (z2) to (w1);
    \draw (z3) to (w2);
    \draw (z5) to (w3);
    \draw (z5) to (w2);
    \draw (z2) to (w2);
    \draw (z1) to (w1);
    \draw (z1) to (w2);
    \draw (z4) to (w2);

\end{tikzpicture} 
    \end{tabular}
    \vspace{-0.8cm}
\end{center}
\caption{A $3$-thin representation of a graph (left) and a proper $3$-thin representation of a graph (right). Vertices are ordered from bottom to top, and classes correspond to vertical lines.}\label{fig:thinness}
\end{figure}
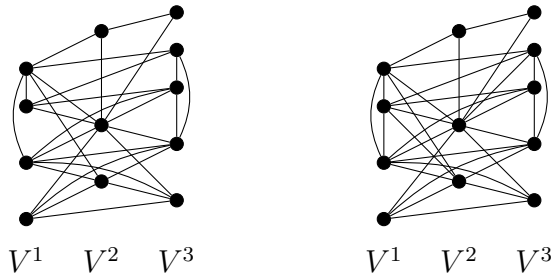

The definitions of thinness and proper thinness gave rise to the variants \emph{independent or complete}~\cite{BGOSS-thin-oper}, \emph{precedence}~\cite{BOSS-thin-prec-dam-lagos}, \emph{mixed or inversion-free mixed}~\cite{Mixed-thin-dm} (and their proper versions). These last variants were defined in 2022 by Balabán, Hlinený, and Jedelský, showing that the parameters have attracted interest in the community in recent years. Regarding the recognition of thinness and proper thinness, it is proved in~\cite{B-M-O-thin-tcs,B-D-thinness} that, given a vertex ordering, there  are $O(n^3)$ algorithms that find, respectively, a consistent or strongly consistent partition into a minimal number of classes. However, given a vertex partition, it is NP-complete to decide whether there exists a (strongly) consistent ordering with that partition~\cite{B-D-thinness}. In~\cite{Shitov-thin}, reducing from that problem, it is shown that deciding whether a graph is $k$-thin is NP-complete. The complexity remains open for fixed $k$ and for the proper variant. Since computing thinness is NP-hard in general, the question is valid for restricted classes of graphs. It has been shown that computing thinness is polynomial for cographs~\cite{B-D-thinness} and for trees~\cite{Agus-Eric-trees-dam}. In both cases, the thinness (and thus the proper thinness) can be arbitrarily large, and the complexity of computing the proper thinness remains open.

The algorithm for thinness of trees is inspired by the algorithm for linear mim-width of trees~\cite{Hogemo-trees-mimw}, which in turn is inspired by the algorithm for pathwidth of trees~\cite{E-S-T-pw-tree,Sch-pw-tree}. It is known~\cite{M-O-R-C-thinness,B-D-thinness} that, in general,

$$\lmimw(G) \leq \thin(G) \leq \pw(G)+1.$$

On the other hand, in~\cite{B-B-lagos21-dam}, it is proved that

$$\pthin(G) \leq \bw(G)+1.$$

However, it is not possible to draw inspiration from bandwidth algorithms in this case, since the problem is NP-complete even for certain fairly restricted subclasses of trees~\cite{Bw-cater} (the long-haired caterpillars with maximum degree~3). In fact, we will see that one of these subclasses of trees is properly contained in the class of trees with proper thinness of at most two.

From a structural point of view, a natural objective that arises is to achieve a characterization by forbidden induced subgraphs of the classes of graphs defined by their thinness or proper thinness. Also in this case, the question is still valid within particular graph classes.
A complete characterization by forbidden induced subgraphs is known for $k$-thin graphs within cographs~\cite{B-D-thinness} and a structural characterization for $k$-thin trees~\cite{Agus-Eric-trees-dam}. In both cases, the problem for proper thinness remains open.

In this work, we study both problems within the class of trees. As a step toward the general case, we provide a characterization that yields a polynomial-time recognition algorithm for trees with proper thinness at most~2.

\subsection{Definitions and basic properties}

All graphs in this work are finite, undirected, and have no loops or multiple edges.

Let $G$ be a graph. Let $V(G)$ denote the set of its vertices and $E(G)$ the set of its edges. For $v\in V(G)$, let $N(v)$ denote its \emph{neighborhood} in $G$, $d(v) = |N(v)|$ its degree, and $N[v] = N(v) \cup \{ v \}$ its \emph{closed neighborhood}. For $X \subseteq V(G)$, let $N(X)$ denote the set of vertices not in $X$ with at least one neighbor in $X$, $N[X] = N(X) \cup X$ its closed neighborhood, $G[X]$ the subgraph of $G$ induced by $X$, and $G - X = G[V(G) \setminus X]$.



If $G_1$ and $G_2$ are two vertex disjoint graphs, the \emph{disjoint union} of $G_1$ and $G_2$, denoted by $G = G_1 \cup G_2$, is such that $V(G) = V(G_1) \cup V(G_2)$ and $E(G) = E(G_1) \cup E(G_2)$.




The \emph{distance} between two vertices $v$ and $w$ in a connected graph $G$ is the length of a shortest path between $v$ and $w$ (the length of a path is measured by the number of edges it comprises) and is denoted $d_G(v,w)$. If the context is unambiguous, it is abbreviated $d(v,w)$.

Given a graph $G$ and an edge $uv \in E(G)$, the graph obtained by the \emph{subdivision} of the edge $uv$ is the graph $G'$
such that $V(G') = V(G) \cup \{w\}$ ($w \not \in V(G)$) and $E(G') = E(G) \setminus \{uv\} \cup \{uw,wv\}$. That is, the edge $uv$ is replaced by a path of length two with endpoints $u$ and $v$. A graph $H$ is said to be \emph{a subdivision} of the graph $G$ if $H$ can be obtained from $G$ by applying a finite number of edge subdivisions. 

A \emph{tree} is a connected graph with no cycles. A \emph{leaf} of a tree $T$ is a vertex of $T$ with degree one.
A tree $T$ is a \emph{caterpillar} if it contains an induced path $P$ such that $V(T) = N[P]$. A \emph{long-haired caterpillar} is a subdivision of a caterpillar. 
A \emph{forest} is a graph with no cycles, that is, the disjoint union of trees.

A graph $H$ is a \emph{forbidden induced subgraph} for a given class of graphs if a graph in that class cannot contain $H$ as an induced subgraph. The forbidden induced subgraph is \emph{minimal} if any proper induced subgraph of it belongs to the class.


An \emph{interval graph} is the intersection graph of intervals on a line. A \emph{proper interval graph} is an interval graph admitting an intersection model in which no interval is properly contained in another.

Interval graphs, and therefore 1-thin graphs, can be recognized in polynomial time~\cite{B-L-PQtrees} and were characterized by forbidden induced subgraphs~\cite{L-B-interval-AT}.

\begin{theorem}\emph{\cite{L-B-interval-AT}}\label{th:LekkerkerkerBoland}
The minimal forbidden induced subgraphs for the class of interval graphs are: bipartite claw, $n$-net for $n\geq 2$, umbrella,
$n$-tent for $n\geq 3$, and $C_n$ for $n\geq 4$ (Figure~\ref{fig:LekkerkerkerBoland}).
\end{theorem}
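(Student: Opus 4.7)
The plan is to prove the theorem using the equivalent characterization that a graph is interval if and only if it is both chordal and asteroidal-triple-free; recall that an asteroidal triple (AT) is a set of three vertices $\{a,b,c\}$ such that between any two there is a path avoiding the closed neighborhood of the third. The argument then splits into two halves: first, verifying that each graph listed is non-interval and minimal (so the list is contained in the set of minimal forbidden induced subgraphs); and second, showing that every non-interval graph contains one of the listed graphs as an induced subgraph (completeness).

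For the first half, I would observe that $C_n$ with $n\ge 4$ is non-interval since interval graphs are chordal, and minimality is immediate because removing any vertex yields a path. For the bipartite claw, $n$-net, umbrella, and $n$-tent, I would identify a natural asteroidal triple (typically the three outermost degree-one vertices) and use the fact that interval graphs are AT-free, which follows directly from ordering intervals by their left endpoints. Minimality of each family reduces to a finite inspection: for every vertex deletion I would need to exhibit an explicit interval representation of the resulting graph.

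The substantive part is the converse: given a non-interval graph $G$, exhibit an induced copy of one of the listed obstructions. If $G$ is not chordal, an induced $C_n$ with $n\ge 4$ is already present. Otherwise $G$ is chordal but not interval, so it must contain an asteroidal triple $\{a,b,c\}$ (proving this implication itself requires a clique-tree argument showing that an AT-free chordal graph admits a linear ordering of its maximal cliques). Given such a triple, I would pick shortest avoiding paths $P_{ab}, P_{bc}, P_{ac}$ between its pairs, examine the minimal $(a,b)$-, $(b,c)$-, and $(a,c)$-separators (which are cliques by chordality), and study how these paths and separators interact with $N(a)$, $N(b)$, $N(c)$. The structural restrictions imposed by chordality and by the minimality of the obstruction should eventually force the configuration to match exactly one of the four chordal templates on the list.

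The main obstacle is precisely this last reduction. An asteroidal triple in a chordal graph admits many combinatorial configurations, and one must show that after contracting to a minimal induced subgraph only the bipartite claw, $n$-net, umbrella, and $n$-tent survive. The case analysis naturally organizes by the lengths of the shortest avoiding paths and by the pairwise intersections of $N(a), N(b), N(c)$, but proving that no additional minimal obstruction can appear requires careful bookkeeping and is essentially the case enumeration carried out in the original Lekkerkerker--Boland paper.
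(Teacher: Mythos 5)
The paper does not prove this statement at all: it is the classical Lekkerkerker--Boland characterization, quoted from the cited reference and used as a known black box, so there is no in-paper argument to compare yours against. Judged on its own terms, your proposal correctly identifies the standard route (interval $=$ chordal $+$ asteroidal-triple-free, followed by a case analysis of asteroidal triples in chordal graphs), and the first half --- verifying that each listed graph is non-interval and minimal --- is routine and would go through essentially as you describe, modulo writing down the explicit interval models after each vertex deletion.

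There is, however, a genuine gap exactly where the content of the theorem lives. The completeness direction --- that a chordal graph containing an asteroidal triple, and minimal with that property, must be a bipartite claw, an $n$-net, the umbrella, or an $n$-tent --- is only described as a configuration that ``should eventually force'' itself into one of the four templates, with the enumeration explicitly deferred to the original paper. Nothing in the sketch excludes a fifth family of minimal chordal obstructions; the interaction of the three avoiding paths with the minimal separators and the neighborhoods $N(a)$, $N(b)$, $N(c)$ is precisely the part that must be carried out case by case, and it is the bulk of the original proof. A secondary gap: the implication ``chordal and not interval implies the existence of an asteroidal triple'' is itself a nontrivial theorem (the other half of the Lekkerkerker--Boland characterization); you flag the need for a clique-tree argument but do not supply it, and it cannot be assumed, since the statement that AT-free chordal graphs admit a consecutive linear ordering of their maximal cliques is essentially equivalent to the interval-graph characterization one is trying to establish. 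As written, the proposal is a correct plan rather than a proof.
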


\begin{figure}[ht]
\begin{center} 
\epsfig{file=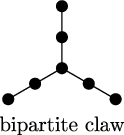}\quad\epsfig{file=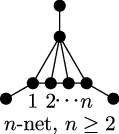}\quad\epsfig{file=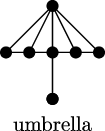} 
\quad\epsfig{file=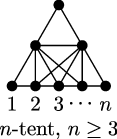}\quad\epsfig{file=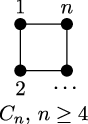}
\caption{Minimal forbidden induced subgraphs for the class of interval graphs.}\label{fig:LekkerkerkerBoland}
\end{center}
\end{figure}

Proper interval graphs, and therefore proper 1-thin graphs, can also be recognized in polynomial time~\cite{F-M-M-prop-int} and were characterized by forbidden induced subgraphs~\cite{Rob-uig}.

\begin{theorem}\emph{\cite{Rob-uig}}\label{th:Roberts}
A graph is a proper interval graph if and only if it is an interval graph and does not have the claw as an induced subgraph. Equivalently, the minimal forbidden induced subgraphs for the class of proper interval graphs are: claw, net, tent, and $C_n$ for $n\geq 4$ (Figure~\ref{fig:Roberts}).
\end{theorem}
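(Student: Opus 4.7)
The plan is to prove the first biconditional (proper interval iff interval and claw-free) and then deduce the forbidden subgraph list from it combined with Theorem~\ref{th:LekkerkerkerBoland}.

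For the forward direction, a proper interval graph is an interval graph by definition, so I only need to show that the claw $K_{1,3}$ admits no proper interval representation. Suppose $v$ is the center and $u_1,u_2,u_3$ are the leaves, with intervals $I_v, I_{u_1}, I_{u_2}, I_{u_3}$. Since the leaves are pairwise nonadjacent, the $I_{u_i}$ are pairwise disjoint; since each $u_i$ is adjacent to $v$, each $I_{u_i}$ meets $I_v$ and hence contains a point outside $I_v$ (otherwise $I_{u_i}$ is properly contained in $I_v$). By pigeonhole, two of the leaf intervals stick out on the same side of $I_v$, so after ordering their outer endpoints one is properly contained in the other, contradicting the proper representation.

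For the reverse direction, let $G$ be an interval graph with no induced claw, and take an interval representation that is \emph{minimal} in the sense that no interval can be shrunk without changing the intersection graph. I would argue that in such a representation no interval is properly contained in another. Indeed, if $I_v \subsetneq I_u$, then $u$ has a neighbor $a$ whose interval extends strictly to the left of $I_v$, and a neighbor $b$ whose interval extends strictly to the right of $I_v$; by minimality these extensions are forced by actual non-neighbors of $v$, so $a$ and $b$ can be chosen non-adjacent to $v$ and (by picking leftmost and rightmost witnesses) non-adjacent to each other. Then $\{u,a,b,v\}$ induces a claw centered at $u$, a contradiction. Hence the minimal representation is already proper. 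An alternative route is a direct inductive construction using a simplicial vertex at one end of a PQ-tree ordering of the maximal cliques.

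For the equivalence with the explicit list, the first part reduces the task to identifying the minimal elements of $\{\text{claw}\} \cup \mathcal{F}$ under induced subgraph containment, where $\mathcal{F}$ is the list of Theorem~\ref{th:LekkerkerkerBoland}. A routine inspection shows the net, the tent, and the cycles $C_n$ with $n\geq 4$ are claw-free, and that the bipartite claw, each $n$-net with $n\geq 3$, the umbrella, and each $n$-tent with $n\geq 4$ contain an induced claw (for example, in the bipartite claw, the central vertex together with its three neighbors induces a claw). Minimality of each surviving member is clear because removing a vertex yields a proper interval graph.

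The main obstacle is the reverse direction of the first biconditional: one must turn the local combinatorial assumption of claw-freeness into a global geometric statement about the representation. The clean way is to start from a minimal representation and derive a claw from any proper containment, as sketched above; the care is in choosing the witnesses $a$ and $b$ non-adjacent to $v$ and to each other, which is where claw-freeness is genuinely used.
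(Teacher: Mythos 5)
This theorem is stated as a citation to Roberts~\cite{Rob-uig}; the paper contains no proof of it, so there is nothing in-paper to compare against. On its own terms, your overall strategy is the standard one, and the reduction of the explicit forbidden-subgraph list to the biconditional combined with Theorem~\ref{th:LekkerkerkerBoland} is structurally sound: the minimal non-proper-interval graphs are exactly the minimal elements of the claw together with the Lekkerkerker--Boland family, and the inspection of which members are claw-free is indeed routine. However, both directions of the biconditional as written have defects. In the forward direction, the final sentence is wrong: two \emph{disjoint} intervals can never be nested, so ``one is properly contained in the other'' cannot be the conclusion. What actually happens is that both leaf intervals sticking out on the same side of $I_v$ must contain the corresponding endpoint of $I_v$, hence they intersect, contradicting the non-adjacency of the two leaves. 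The contradiction is with disjointness, not with properness; the fix is one line, but the step as written does not close.

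The more serious gap is in the reverse direction. Your witness argument produces the left witness $a$ only when $I_u$ extends \emph{strictly} to the left of $I_v$, and $b$ only when it extends strictly to the right. If the proper containment $I_v \subsetneq I_u$ shares an endpoint, say $\ell_u = \ell_v$ and $r_v < r_u$, you obtain only one witness and $\{u,v,b\}$ is not a claw, so the argument stalls precisely on a case that your minimality condition does not exclude. You need to first normalize the representation so that all $2n$ endpoints are distinct (a standard perturbation), and you also need to justify that a ``minimal'' representation exists at all: in the continuous setting ``no interval can be shrunk'' is not obviously attainable, so one should work with endpoints in $\{1,\dots,2n\}$ and minimize, say, the total length or the number of containment pairs. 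With those repairs the rest does go through: any left witness lies entirely to the left of $\ell_v$ and any right witness entirely to the right of $r_v$, so the two witnesses are automatically non-adjacent to $v$ and to each other (your extra care in ``picking leftmost and rightmost witnesses'' is unnecessary), and $u$ together with $v$, $a$, $b$ induces the claw. As it stands, the hard direction is a correct sketch of the classical argument rather than a complete proof.
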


\input{img/proh-int-prop}

Notice that since (proper) 1-thin graphs are equivalent to (proper) interval graphs, each class in a vertex partition that is (strongly) consistent with some ordering induces a (proper) interval graph.

\section{Trees with proper thinness~2}

\subsection{Interval and proper interval trees and forests}

It is easy to see that every caterpillar is an interval graph. Furthermore, the \emph{bipartite claw} (see Figure~\ref{fig:LekkerkerkerBoland}), which is not a caterpillar, is known to be one of the minimal forbidden induced subgraphs for the class of interval graphs (Theorem~\ref{th:LekkerkerkerBoland}). Therefore, interval trees are exactly the caterpillars, and interval forests turn out to be the disjoint union of caterpillars.

In turn, the \emph{claw} or $K_{1,3}$ (see Figure~\ref{fig:Roberts}), is the only minimal forbidden induced subgraph for the class of proper interval graphs, within the class of interval graphs (Theorem~\ref{th:Roberts}). Therefore, proper interval trees are just paths, and proper interval forests turn out to be the disjoint union of paths.

Thus, in the partition of a thin representation of a tree, each of the classes is a disjoint union of caterpillars and, for a proper thin representation, a disjoint union of paths.

\subsection{Structural characterization}

In~\cite{Agus-Eric-trees-dam}, it was shown that the following result is valid for the thinness of trees: Given a tree $T$ with thinness equal to $k$, one can always obtain a simple path $C$ such that all the connected components of $T - N[C]$ have thinness less than $k$ and, furthermore, the vertices of $N[C]$ form one of the classes of a $k$-thin partition of $T$.

In this work, we attempt to obtain a similar result for the proper thinness of trees. In this case, taking a simple path $C$ in $T$ and observing the proper thinness of the connected components of $T - C$. We will show that this result is valid for the case of trees of proper thinness~2, that is, there always exists a simple path $C$ such that one can take a partition (which is strongly consistent with some ordering) into two classes: one with the vertices of $C$ and another one with the vertices of $T - C$.

\subsubsection{Structural characterization theorem}

The main result of this work is the following.

\begin{theorem}\label{theorem:characterization_pthin2}
A tree $T$ has proper thinness~2 if and only if the following hold simultaneously:
    \begin{enumerate}
        \item There exists some vertex in $T$ with degree greater than or equal to~3.
        \item If $v$ is a vertex of $T$ with degree greater than or equal to~5, then at most~4 of its neighbors have degree greater than~1.
        \item There exists a simple path $C_0$ in $T$ such that:
        \begin{enumerate}
            \item All vertices of $T$ with degree greater than or equal to~4 are in $C_0$.
            \item If some vertex $v$ of degree~3 is not in $C_0$, then:
            \begin{enumerate}
                \item $v$ is adjacent to a vertex $w$ of $C_0$.
                \item The degree of $w$ is less than or equal to~3. 
            \end{enumerate} 
        \end{enumerate} 
    \end{enumerate}
\end{theorem}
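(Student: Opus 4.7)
I would prove the equivalence by handling the two directions separately. Condition~1 for necessity is immediate: if every vertex of $T$ has degree at most~2 then $T$ is a path, hence a proper interval graph, so $\pthin(T)=1$. For the rest of the necessity, I would fix a proper $2$-thin representation with partition $(V^1,V^2)$ and strongly consistent ordering $\sigma=v_1,\dots,v_n$. Since each class induces a proper interval subgraph of a tree, each class is a disjoint union of paths. I would extract the candidate path $C_0$ from this representation, most likely as a path in $T$ traced by combining an initial segment of a path-component of $V^1$ with a suitable path-component of $V^2$ (or vice versa), guided by the positions of the first and last high-degree vertices in $\sigma$. Conditions 3(a) and 3(b) are then proved by contradiction: assuming a vertex $v\notin C_0$ violates them, I would locate three vertices $r<s<t$ witnessing a failure of strong consistency, exploiting the fact that in a tree, two vertices in the same class that share a common neighbor outside the class force a very rigid configuration. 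For condition~2, a vertex $v$ of degree $\ge 5$ with five neighbors of degree $\ge 2$ yields an induced subdivision of the graph obtained from $K_{1,5}$ by subdividing every edge once; I would argue directly that this configuration has proper thinness at least~3 by an exhaustive case analysis on which class $v$ and each of its five neighbors belong to, using that each class in a proper $2$-thin representation is a disjoint union of paths.

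For sufficiency, assume the three conditions hold and fix $C_0$. The natural partition is $(C_0, V(T)\setminus C_0)$. The class $C_0$ is a path, hence proper interval. For the other class, every vertex of degree $\ge 4$ lies in $C_0$ by~3(a), and every vertex of degree $3$ outside $C_0$ has a neighbor in $C_0$ by~3(b), so its degree in $T-C_0$ is at most~2; hence $T-C_0$ is a forest of maximum degree at most~2, i.e.\ a disjoint union of paths, so the second class also induces a proper interval graph. It remains to build a single ordering of $V(T)$ that is strongly consistent with this partition. Writing $C_0=u_1u_2\cdots u_m$, every component of $T-C_0$ is a path attached to $C_0$ through a unique edge (because $T$ is a tree). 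The plan is to traverse $C_0$ from $u_1$ to $u_m$ and, for each $u_i$, place its attached pendant paths in a local window around $u_i$, splitting the window between ``just before $u_i$'' and ``just after $u_i$''. Within each pendant component, the attachment vertex (possibly an interior vertex, in which case condition 3(b) limits $u_i$'s degree to at most~$3$) determines how the two sides of that component are distributed; condition~2 caps the number of ``long'' pendants per $u_i$ so that the local placement is feasible.

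The main obstacle is the verification step for sufficiency: checking that the constructed ordering together with the two-class partition is strongly consistent for every triple $r<s<t$ with $v_rv_t\in E(T)$. This decomposes into cases according to which of $v_r,v_s,v_t$ lie in $C_0$ and, for those outside, which pendant component they belong to; the delicate cases involve an edge from a pendant vertex to a $C_0$ vertex and an intermediate vertex in a different pendant, where one must ensure that the local window sizes do not interleave incompatibly. Here conditions 2 and 3(b) enter essentially, because they control exactly how much ``room'' each $u_i$ needs for its pendants. On the necessity side, the main subtlety is producing the correct $C_0$ from the abstract representation; I expect this $C_0$ must be defined algorithmically in terms of $\sigma$ (for example as the unique path in $T$ joining the first and last degree-$\ge 3$ vertex of $\sigma$, or a modification thereof), and then shown to absorb all vertices required by 3(a) using the rigidity imposed by strong consistency.
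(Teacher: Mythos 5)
Your sufficiency argument is essentially the paper's: take $V^0=C_0$ (the paper first extends $C_0$ to a maximal path ending in two leaves, a detail worth keeping, since it guarantees the endpoints carry no pendants), observe via 3(a) and 3(b) that $T-C_0$ has maximum degree at most $2$, and build the ordering by sweeping along $C_0$ and inserting each pendant path in a local window just before and just after its attachment vertex $u_i$, with condition~2 guaranteeing at most two ``long'' pendants per $u_i$. Your route to the necessity of condition~2 differs from the paper's but is sound: a degree-$\geq 5$ vertex with five non-leaf neighbors contains the once-subdivided $K_{1,5}$ (the paper's $T_0$) as an induced subtree, and a finite case analysis shows $\pthin(T_0)\geq 3$. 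The paper instead derives condition~2 directly from two order lemmas (a vertex has at most two same-class neighbors, one on each side; and among three or more neighbors in the opposite class, only the extremal two in the ordering may have degree $>1$), which is cleaner but not essentially different.

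The genuine gap is the necessity of condition~3. Your plan is to read $C_0$ off the path-components of the two classes of an arbitrary strongly consistent representation, ``guided by the first and last high-degree vertices of $\sigma$.'' This is not justified and is unlikely to work as stated: in an arbitrary proper $2$-thin representation neither class need contain, or even approximate, the required path (the fact that some representation has a path as one class is Corollary~2.1, which the paper deduces \emph{from} the theorem via the sufficiency construction, not from an arbitrary representation). The paper's actual argument is purely combinatorial and rests on a toolkit you do not supply: no path can connect a vertex lying ``before'' an edge $v_0v_1$ (with $v_0,v_1$ in different classes) to a vertex lying ``after'' it without passing through $v_0$ or $v_1$ (Proposition~A.6); consequently any vertex $v_2$ of degree $\geq 4$ with $v_1<v_2<v_3$ in $\sigma$ must lie on the tree path from $v_1$ to $v_3$, so no vertex can be a ``nexus'' of three degree-$\geq 4$ vertices and they all lie on one simple path; analogous but weaker statements hold for degree-$3$ vertices (they must lie on or be adjacent to the path). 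One then needs an iterative path-exchange argument to absorb the degree-$3$ vertices one at a time while preserving containment of all degree-$\geq 4$ vertices, and a separate argument ruling out $d(w)\geq 4$ for the attachment vertex $w$ of a pendant degree-$3$ vertex. None of these steps is present or replaceable by the rigidity remark you gesture at, so as written the necessity of condition~3 is unproved.
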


The following corollaries follow from the previous theorem:

\begin{corollary}\label{coro:pthin2_connected_class}
Let $T = (V, E)$ be a tree of proper thinness~2. Then, there exists a simple path $C_0 = v_1, \dots, v_k$ in $T$ and an ordering of $V$ that is strongly consistent with the partition $S = \{ V^0, V^1 \}$, where $V^0 = \{ v_1, \dots, v_k \}$ and $V^1 = V - V^0$.
\end{corollary}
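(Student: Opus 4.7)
The plan is to derive this corollary directly from Theorem~\ref{theorem:characterization_pthin2}. Since $T$ has proper thinness~2, the theorem provides a simple path $C_0 = v_1, \ldots, v_k$ satisfying conditions 3(a) and 3(b); I set $V^0 = \{v_1,\ldots,v_k\}$ and $V^1 = V \setminus V^0$ and aim to construct a vertex ordering strongly consistent with the partition $\{V^0, V^1\}$.

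The first step is to verify that $V^1$ induces a proper interval graph. Any $u \in V^1$ satisfies $d_T(u) \leq 3$ by condition 3(a), and if $d_T(u) = 3$ then by 3(b)(i) at least one neighbor of $u$ lies in $C_0$. Hence every vertex of $V^1$ has at most two neighbors inside $T[V^1]$, and since $T$ is acyclic, each component of $T[V^1]$ is a path. Moreover, the tree property forces each such component to be joined to $C_0$ by a single edge.

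The second step is to construct the ordering. I list $C_0$ from $v_1$ to $v_k$ and, for each $v_i$, insert the vertices of the hanging paths attached to $v_i$ around $v_i$ itself: a hanging path attached at an endpoint is placed just before $v_i$, ordered so that its attachment vertex sits nearest to $v_i$; a hanging path attached at an internal vertex $u_q$ (necessarily of degree~3 in $T$) is split at $u_q$ and placed on either side of $v_i$. When $v_i$ has several hanging components, their vertices are interleaved layer by layer by distance from $v_i$, so that vertices farther from $v_i$ come earlier in the block preceding $v_i$.

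The main obstacle will be the verification of strong consistency. Triples lying entirely inside $V^0$ or entirely inside $V^1$ cause no difficulty, since both classes induce proper interval graphs whose natural path orderings agree with the chosen one. The delicate cases are mixed triples involving a vertex of $C_0$ together with vertices from hanging paths; the layered interleaving is chosen precisely so that property~(i) holds because the nearest vertex of each hanging bristle to $v_i$ appears immediately before $v_i$, while property~(ii) is guaranteed, upon reading the ordering in reverse, by the symmetric layering. This construction parallels the sufficiency direction of Theorem~\ref{theorem:characterization_pthin2}, which already exhibits a strongly consistent ordering for precisely the partition $\{V^0, V^1\}$, so the corollary essentially records that fact.
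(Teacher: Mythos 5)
Your overall strategy coincides with the paper's: the corollary is obtained by re-running the sufficiency construction from the proof of Theorem~\ref{theorem:characterization_pthin2}, which already produces a strongly consistent ordering for exactly the partition $\{V^0,V^1\}$ with $V^0$ the vertex set of a simple path. Your first step (every component of $T[V^1]$ is a path, attached to $C_0$ by a single edge) is correct and matches the paper.

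However, the explicit ordering you describe contains a step that fails. You place every hanging path that is attached at one of its endpoints entirely in the block \emph{preceding} $v_i$, and when several hanging components are present you interleave them by distance from $v_i$. Take $v_i\in V^0$ with two hanging paths $a_1a_2$ and $b_1b_2$ attached to $v_i$ at $a_1$ and $b_1$ respectively (this occurs, for instance, at a degree-$4$ vertex of $C_0$ carrying two pendant paths of length~$2$). Your rule yields $a_2<b_2<a_1<b_1<v_i$, and the triple $a_2<b_2<a_1$ violates consistency: $a_2,b_2\in V^1$ and $a_1a_2\in E$, but $a_1b_2\notin E$. In fact no ordering that puts all four of $a_1,a_2,b_1,b_2$ on the same side of $v_i$ can be consistent, so the remedy is not a different interleaving; the paper's construction places one non-trivial branch before $v_i$ and the other after it. A second, related omission is that you take $C_0$ verbatim from condition~3 of the theorem, whereas the paper first extends it to a maximal path $C_0'$ ending in two leaves before setting $V^0=V(C_0')$. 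This extension is not cosmetic: for a spider with center $v_1$ of degree~$4$ and four legs of length~$5$, the single-vertex path $C_0=\{v_1\}$ satisfies condition~3, yet the partition $V^0=\{v_1\}$, $V^1=V\setminus\{v_1\}$ admits \emph{no} strongly consistent ordering (four non-trivial branches cannot be distributed over the two sides of $v_1$). Only after extending the path through two of the legs, and using condition~2 to bound the number of non-trivial branches at internal vertices of degree at least~$5$, does every path vertex carry at most two non-trivial branches, which is what makes the before/after placement possible.
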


\begin{corollary}\label{coro:pthin2_subtreespthin1}
Let $T = (V, E)$ be a tree of proper thinness~2. Then, there exists a simple path $C_0$ such that all connected components of $T - C_0$ have proper thinness equal to 1.
\end{corollary}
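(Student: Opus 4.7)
The plan is to derive this corollary directly from Corollary~\ref{coro:pthin2_connected_class} together with the general observation, noted earlier in the paper, that each class of a strongly consistent partition induces a proper interval graph.

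First, I would apply Corollary~\ref{coro:pthin2_connected_class} to obtain a simple path $C_0 = v_1, \dots, v_k$ in $T$ and an ordering of $V$ that is strongly consistent with the partition $\{V^0, V^1\}$, where $V^0 = V(C_0)$ and $V^1 = V \setminus V^0$. The key property to exploit is that strong consistency of the partition with an ordering forces each class to induce a proper interval graph: this is the remark made right after Theorem~\ref{th:Roberts} (and is immediate from the definition, since the restriction of the ordering to a single class is a strongly consistent ordering for that class, i.e., a proper interval ordering). Applying this to the class $V^1$ gives that $T[V^1]$ is a proper interval graph.

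Next, I would invoke the structural description from Section~2.1: since $T$ is a tree, $T[V^1]$ is a forest, and the only proper interval forests are disjoint unions of paths. Hence every connected component of $T[V^1]$ is a path. Since $T - C_0 = T[V \setminus V(C_0)] = T[V^1]$, each connected component of $T - C_0$ is a path. Finally, paths have proper thinness equal to~$1$, because they are proper interval graphs and proper $1$-thin graphs coincide with proper interval graphs (this includes the degenerate cases of single-vertex components, which are trivially proper $1$-thin).

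There is essentially no obstacle here beyond unpacking definitions: the substantive work is all contained in Corollary~\ref{coro:pthin2_connected_class} (which in turn rests on Theorem~\ref{theorem:characterization_pthin2}). The only point deserving a brief check is that ``proper thinness equal to~$1$'' is meant to include the empty or single-vertex cases that can arise among the components of $T - C_0$; under the convention that any non-empty graph admitting a strongly consistent ordering with a one-class partition has proper thinness~$1$, this is immediate.
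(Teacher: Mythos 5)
Your proposal is correct and follows essentially the paper's intended route: the paper derives both corollaries from the path $C_0$ and the two-class strongly consistent partition constructed in the proof of Theorem~\ref{theorem:characterization_pthin2} (equivalently, Corollary~\ref{coro:pthin2_connected_class}), combined with the observation in Section~2.1 that each class of a strongly consistent partition of a tree induces a disjoint union of paths. Your abstract justification of that last step (restricting the ordering to $V^1$ yields a proper interval graph, and proper interval forests are unions of paths) is exactly the remark the paper records after Theorem~\ref{th:Roberts}, so no gap remains.
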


\subsection{Characterization by minimal forbidden induced subgraphs}

From Theorem \ref{theorem:characterization_pthin2}, we also obtained a characterization by minimal forbidden induced subgraphs.

\input{img/forbidden_pt2.tex}

\begin{theorem}\label{theorem:forbidden_subtrees}
    Let $T$ be a tree. Then, $T$ has proper thinness~2 if and only if $T$ has at least one vertex of degree greater than or equal to~3 and $T$ does not contain any of the trees in Figure~\ref{fig:prohibidos_pt2} as an induced subgraph.
\end{theorem}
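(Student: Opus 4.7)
My proof proposal reduces Theorem~\ref{theorem:forbidden_subtrees} to the structural characterization of Theorem~\ref{theorem:characterization_pthin2}, together with the easy observation that proper thinness is monotone under taking induced subgraphs (restricting a strongly consistent ordering and partition to $X \subseteq V(G)$ stays strongly consistent on $G[X]$ with no more classes). In the easy direction, if $\pthin(T) = 2$ then $T$ is not a path, so $T$ has a vertex of degree~$\geq 3$; and each tree in Figure~\ref{fig:prohibidos_pt2} itself violates some clause of Theorem~\ref{theorem:characterization_pthin2}, hence has proper thinness~$\geq 3$. Concretely, $T_0$ is a spider with five legs of length~$2$ whose center violates~(2); $T_1$ has three degree-$4$ vertices hanging off a common degree-$3$ hub, so no simple path contains them all and (3a) fails; each of $T_2^{i,j,k}, T_3^{i,j}, T_4^i, T_5^{i,j}$ realizes a different failure of~(3), indexed by the subdivision lengths that the dashed edges encode. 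Monotonicity then prevents any of them from appearing as an induced subgraph of $T$.

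The backward direction is the substantive content. Assume $T$ has a vertex of degree~$\geq 3$ but $\pthin(T) \neq 2$, so $\pthin(T) \geq 3$, and by Theorem~\ref{theorem:characterization_pthin2} at least one of (2), (3a), (3b) must fail. I would split into three cases. Failure of~(2) yields $T_0$ immediately: a vertex $v$ of degree~$\geq 5$ with five neighbors $u_1,\dots,u_5$ of degree~$\geq 2$, together with one chosen further neighbor $w_i$ of each $u_i$ (all distinct since $T$ is a tree), induces a copy of $T_0$. Failure of~(3a) means the Steiner subtree spanned by the set of degree-$\geq 4$ vertices is not a path, so it has a branch point $b$; taking three degree-$\geq 4$ vertices meeting at $b$ along with the shortest paths in $T$ connecting them to $b$ and the leaves around them yields $T_1$ when $b$ itself has degree~$\geq 4$, and a subdivided analogue from the family $T_2^{i,j,k}$ otherwise. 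Failure of~(3b) while~(3a) holds is the most delicate: every path $C_0$ that satisfies~(3a) has a degree-$3$ witness $v \notin C_0$ that is either non-adjacent to $C_0$ or attached to $C_0$ only at degree-$\geq 4$ vertices. I would fix a canonical such $C_0$ (for instance, one maximizing the number of degree-$\geq 4$ vertices on it, and then minimizing the distance to a worst offending off-path degree-$3$ vertex) and analyze how $v$ and the nearby degree-$3$ and degree-$\geq 4$ vertices attach to $C_0$; the resulting local structure must match one of $T_3^{i,j}, T_4^i, T_5^{i,j}$.

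The principal obstacle will be this last case, since ruling out~(3b) for every admissible path requires reasoning about all paths simultaneously. The planned device is to transfer the burden from \emph{every path} to \emph{one canonical path} via a minimality argument: if a local rerouting of $C_0$ through $v$ (or through a sibling branch) resolved the obstruction, the reroute would contradict the extremal choice of $C_0$, so the obstruction is rigid, and that rigidity forces the ambient structure to contain one of $T_3, T_4, T_5$, with the parameters $i,j$ recording the distances between the critical branching points. A subsidiary task, needed to justify the word \emph{minimal} in the statement, is to check family by family that removing any single vertex from a member of Figure~\ref{fig:prohibidos_pt2} yields a tree satisfying the conditions of Theorem~\ref{theorem:characterization_pthin2}; this is routine but must be done by inspection for each of $T_0, T_1, T_2, T_3, T_4, T_5$ and for the boundary values of the subdivision parameters.
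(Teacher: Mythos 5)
Your proposal follows essentially the same route as the paper: both directions are reduced to Theorem~\ref{theorem:characterization_pthin2} plus monotonicity of proper thinness under induced subgraphs, and the backward direction is a case analysis on which clause of the structural characterization fails, with the universal quantification over paths handled by rerouting/extremal choice of $C_0$ (the paper does this ad hoc, by repeatedly replacing $C_0$ with a union of $C_0$ and a branch toward the offending vertex, rather than fixing one canonical extremal path up front, but the device is the same). One concrete slip: in your treatment of the failure of (3a) you claim the branch point $b$ together with three degree-$\geq 4$ vertices yields ``$T_1$ when $b$ itself has degree $\geq 4$, and a subdivided analogue from the family $T_2^{i,j,k}$ otherwise.'' The correct criterion is not the degree of $b$ but how many of the three degree-$\geq 4$ vertices are \emph{adjacent} to the nexus $b$: all three gives $T_1$, exactly two gives $T_4^i$, exactly one gives $T_3^{i,j}$, and none gives $T_2^{i,j,k}$. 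Your two-way split leaves the mixed cases unaccounted for (note $T_2^{i,j,k}$ requires all three arms to have length at least two, so it cannot absorb them), so $T_3$ and $T_4$ must appear in this branch of the argument as well, not only in the failure of (3b). This is a fixable omission rather than a wrong approach. Also, the minimality check you describe at the end is not needed for this theorem as stated; the paper isolates it as a separate proposition.
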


\begin{proof}
\textbf{$\Rightarrow$)} It is easy to see that each tree in Figure~\ref{fig:prohibidos_pt2} does not satisfy one or more of the properties of the characterization of trees with proper thinness 2. Therefore, if $\pthin(T)=2$, then these trees cannot be induced subgraphs of $T$. Furthermore, since $\pthin(T) = 2$, there must be at least one vertex of degree greater than 2, otherwise $T$ is a path and therefore $\pthin(T) = 1$.

\textbf{$\Leftarrow$)} Suppose that $T$ does not contain any of the trees listed in Figure~\ref{fig:prohibidos_pt2} and that there is a vertex $v \in T$ such that $d(v) \geq 3$, therefore the first property of the characterization holds.
Furthermore, since $T_0$ does not appear as an induced subgraph of $T$, the second property also holds.

Let us see that the third property holds. Suppose that for every simple path $C_0$ of $T$, at least one of statements 3.a, 3.b.i, or 3.b.ii is not true, and we see that a contradiction arises.

Let us first suppose that statement 3.a does not hold. More precisely, for every simple path $C_0$ of $T$, there is a vertex $v \in T$ such that $d(v) \geq 4$ and $v \notin C_0$. Therefore, there are at least three vertices $v_1$, $v_2$, and $v_3$ in $T$ of degree greater than 3 such that there exists a vertex $v_0$ that is the link between those three vertices. Depending on whether $v_0$ is adjacent or not to vertices $v_1$, $v_2$, and $v_3$, this implies that one of the trees in Figure~\ref{fig:prohibidos_pt2} is an induced subgraph of $T$, so we arrive at a contradiction:
\begin{itemize}
\item If $v_0$ is adjacent to all three vertices, $T_1$ is an induced subgraph of $T$.
\item If $v_0$ is adjacent to two of the three vertices, $T_4^i$ is an induced subgraph of $T$ (with $i$ equal to the distance between $v_0$ and the non-adjacent vertex of the three).
\item If $v_0$ is adjacent to one of the three vertices, $T_3^{i,j}$ is an induced subgraph of $T$ (with $i$ and $j$ the distances between $v_0$ and the two non-adjacent vertices).
\item If $v_0$ is not adjacent to any of the three vertices, $T_2^{i,j,k}$ is an induced subgraph of $T$ (with $i$, $j$, and $k$ the distances between $v_0$ and the three vertices).
\end{itemize}

Now, suppose there is a simple path $C_0= v_1, \ldots, v_k$ that contains all vertices of degree greater than 4, but at least one of the statements in 3.b is not true.
Let $v \notin C_0$ be a vertex of degree 3 such that 3.b.i does not hold for $v$. In other words, $v$ is not adjacent to any vertex in $C_0$. We can assume without loss of generality that $v_1$ and $v_k$ are vertices of degree greater than 2.
By hypothesis, there is a vertex $v' \notin C_0$ adjacent to $v$, and since $T$ is connected, there is a path $C_1$ from $v'$ to some vertex $v_i \in C_0$ (which does not pass through $v$).
We can assume that $i \neq 1, k$, otherwise we can consider the union of $C_0$ and $C_1$ as a new path for which the third statement holds.

\begin{itemize}
\item If $d(v_1), d(v_k) = 3$, then there are vertices $w$ and $w'$ in $C_0$, $w \neq v_i \neq w'$, such that $v_1 w, w' v_k \in E(T)$, otherwise we can consider the path joining either $v$ and $v_1$, or $v$ and $v_k$ (i.e., the edge of $C_0$ that has no `intermediate' vertex between itself and $v'$). However, in that case we find $T_2^{i,j,k}$ to be an induced subgraph of $T$.
\item If $d(v_1) = 3$ and $d(v_k) \geq 4$ (w.l.o.g.), then there is a vertex $w$ in $C_0$ ($w \neq v_i$) such that $v_1 w \in E(T)$, otherwise we can consider the path joining $v$ and $v_k$. However, in that case we find $T_3^{i,j}$ to be an induced subgraph of $T$.
\item If $d(v_1) \geq 4$ and $d(v_k) \geq 4$, then:
\begin{itemize}
\item If $v_i$ is adjacent to both $v_1$ and $v_k$, then $T_4^i$ is an induced subgraph of $T$.
\item If $v_i$ is not adjacent to either $v_1$ or $v_k$, $T_2^{i,j,k}$ is an induced subgraph of $T$.
\item If not, that is, $v_i$ is adjacent to only one of the two vertices, $T_3^{i,j}$ is an induced subgraph of $T$.
\end{itemize}
\end{itemize}

In all cases, we arrive at a contradiction.

Finally, let $v \notin C_0$ be a vertex of degree 3 such that 3.b.ii does not hold for $v$. That is, $v$ is adjacent to some vertex $v_i \in C_0$ such that $d(v_i) \geq 4$. Therefore, there is a vertex $w \notin C_0$ adjacent to $v_i$. We can once again assume that $v_1$ and $v_k$ are vertices of degree at least 3. However, we find $T_5^{i,j}$ to be an induced subgraph, where even $i$ or $j$ could be 0. In other words, there can be no subdivisions between $v_i$ and the ends of $C_0$. In this case, it is not possible to consider alternative paths $v v_i v_{i+1} \dots v_k$ or $v v_i v_{i-1} \dots v_1$ for which all the statements hold.

From the above argument, it follows that there must be a simple path containing all vertices of degree greater than or equal to 4 such that statements 3.b.i and 3.b.ii hold, hence $\pthin(T)=2$.
\end{proof}

\begin{proposition} 
The graphs in Figure~\ref{fig:prohibidos_pt2} are minimal with respect to having proper thinness greater than~2.
\end{proposition}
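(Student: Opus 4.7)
The plan is to verify two things for each graph $T$ in Figure~\ref{fig:prohibidos_pt2}: first, that $\pthin(T) > 2$, and second, that $\pthin(T - v) \leq 2$ for every vertex $v$ of $T$.

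The first part is essentially immediate from Theorem~\ref{theorem:forbidden_subtrees}: each graph in the figure contains itself as an induced subgraph and has at least one vertex of degree $\geq 3$, so by the theorem none of them has proper thinness equal to~2, and since they are not paths, their proper thinness cannot be~1 either. Hence $\pthin(T) \geq 3$ for each of them.

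For the second part, the key observation to be used is that if $G$ is a disjoint union of graphs $G_1, \ldots, G_p$, then $\pthin(G) = \max_i \pthin(G_i)$, since concatenating orderings strongly consistent with partitions of the components yields an ordering strongly consistent with the union of those partitions. Consequently, showing that $T - v$ has proper thinness at most~2 reduces to checking that each connected component of the resulting forest has proper thinness at most~2, and for each such component this is done via Theorem~\ref{theorem:characterization_pthin2} (components that are paths, isolated edges or isolated vertices trivially have proper thinness at most~1). The case analysis can be reduced significantly using the automorphism group of each $T$: in $T_0$ and $T_1$ there are only a handful of vertex orbits to consider, while for the parametric families $T_2^{i,j,k}, T_3^{i,j}, T_4^i, T_5^{i,j}$ the orbit of a vertex depends on which dashed edge it lies on and whether it is an endpoint of that dashed path.

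The main obstacle will be the deletion of an internal vertex of one of the dashed (possibly subdivided) paths in $T_2^{i,j,k}, T_3^{i,j}, T_4^i, T_5^{i,j}$. In these cases $T - v$ splits into two subtrees, each inheriting some of the high-degree vertices of $T$, and one must explicitly exhibit in each component a simple path $C_0$ satisfying conditions~3.a and~3.b of Theorem~\ref{theorem:characterization_pthin2}. This typically means routing $C_0$ through the surviving vertices of degree $\geq 4$ and verifying that the remaining degree-3 branches (if any) are adjacent to $C_0$ at vertices of degree at most~3; the structural simplicity of each $T_\ell$ after the split makes this feasible, as the offending configuration that forced $\pthin(T) > 2$ is always destroyed by removing $v$.
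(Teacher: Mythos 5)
Your overall strategy is sound and the reductions you invoke are all legitimate: deducing $\pthin(T)\geq 3$ from the already-proved forward direction of Theorem~\ref{theorem:forbidden_subtrees}, reducing minimality to single-vertex deletions via monotonicity of proper thinness under induced subgraphs, and splitting $T-v$ into components via the disjoint-union identity $\pthin(G_1\cup G_2)=\max(\pthin(G_1),\pthin(G_2))$. However, your route differs from the paper's in two respects. First, the paper does not organize the verification around vertex deletions at all: it proves an auxiliary monotonicity statement (Proposition~\ref{prop:pthin_proper_subgraphs}) and then reduces to the \emph{maximal} connected proper subtrees of each forbidden tree, i.e.\ the one or few connected proper subtrees that are not themselves proper subtrees of another proper subtree; showing $\pthin=2$ for those handles every proper induced subgraph at once. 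Second, and more importantly, the paper certifies $\pthin\leq 2$ for each of these maximal subtrees by exhibiting an explicit proper $2$-thin representation (an ordering together with a two-class partition), whereas you propose to certify it by re-applying the structural characterization of Theorem~\ref{theorem:characterization_pthin2} to each component. Your method buys a shorter argument per case (one only needs to point at a path $C_0$ and check degrees), but it leans on the full strength of the hard direction of Theorem~\ref{theorem:characterization_pthin2}; the paper's explicit representations are self-contained certificates.

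The one thing I would press you on is that your proposal stops at the level of a plan: the entire mathematical content of this proposition is the case analysis itself, and you assert rather than carry out the verification that, for every orbit of vertices in each of $T_0,T_1,T_2^{i,j,k},T_3^{i,j},T_4^i,T_5^{i,j}$, each component of the deletion satisfies conditions 1--3 of Theorem~\ref{theorem:characterization_pthin2} (or is a path). The claim that ``the offending configuration is always destroyed by removing $v$'' is exactly what must be checked, not assumed; in particular, for internal vertices of the dashed paths you must actually produce the path $C_0$ in each of the two resulting components and confirm condition~3.b.ii where degree-$4$ vertices survive. Until that enumeration is written out (or explicit representations are given, as in the paper), the proof is incomplete, even though no step of it would fail.
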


\subsection{Polynomial-time recognition algorithm}

It is not hard to see that the characterizations of Theorems~\ref{theorem:characterization_pthin2} and~\ref{theorem:forbidden_subtrees} lead to a polynomial-time recognition and certifying algorithm for trees of proper thinness~2. Namely, in polynomial time, either the path $C_0$ is identified or one of the forbidden induced subgraphs can be polynomially detected. Moreover, the proof of Theorem~\ref{theorem:characterization_pthin2}, omitted in this version due to lack of space, is constructive and in the positive case a vertex ordering strongly consistent with a partition into two classes can be constructed in polynomial time. Still, as a future work we aim to design a more efficient recognition and certifying algorithm.  

\section{Trees with proper thinness 3}


We know that for the case of trees with proper thinness equal to 2, the Corollaries \ref{coro:pthin2_connected_class} and \ref{coro:pthin2_subtreespthin1} are valid.

We would like to see if some of these properties can be generalized to the case of trees with proper thinness greater than~2. Namely, to see if given a tree with proper thinness equal to~$n$ (with $n > 2$), there exists a partition into $n$ classes and a strongly consistent ordering such that one of the classes induces a simple path or if, at least, there exists a simple path whose removal leaves a graph with strictly smaller proper thinness.


However, neither of these statements are true for trees with proper thinness equal to 3. This can be proven using the counterexample $T_A$, which consists of three copies of the tree $T_0$ from Theorem \ref{theorem:forbidden_subtrees}, joined by a vertex we call $v_0$.

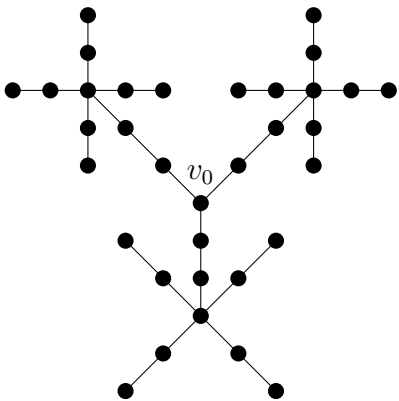
\begin{figure}[H]
    \centering
    \begin{tikzpicture}[scale=0.5]
    	\begin{pgfonlayer}{nodelayer}
    		\node [label={$v_0$}] [circle,fill=black,scale=0.55] (0) at (0, 0) {};
    		\node [circle,fill=black,scale=0.55] (1) at (-1, 1) {};
    		\node [circle,fill=black,scale=0.55] (2) at (1, 1) {};
    		\node [circle,fill=black,scale=0.55] (3) at (0, -1) {};
    		\node [circle,fill=black,scale=0.55] (5) at (-2, 2) {};
    		\node [circle,fill=black,scale=0.55] (6) at (-3, 3) {};
    		\node [circle,fill=black,scale=0.55] (7) at (0, -2) {};
    		\node [circle,fill=black,scale=0.55] (8) at (0, -3) {};
    		\node [circle,fill=black,scale=0.55] (9) at (2, 2) {};
    		\node [circle,fill=black,scale=0.55] (10) at (3, 3) {};
    		\node [circle,fill=black,scale=0.55] (11) at (2, 3) {};
    		\node [circle,fill=black,scale=0.55] (12) at (1, 3) {};
    		\node [circle,fill=black,scale=0.55] (13) at (3, 4) {};
    		\node [circle,fill=black,scale=0.55] (14) at (3, 5) {};
    		\node [circle,fill=black,scale=0.55] (15) at (4, 3) {};
    		\node [circle,fill=black,scale=0.55] (16) at (5, 3) {};
    		\node [circle,fill=black,scale=0.55] (17) at (3, 2) {};
    		\node [circle,fill=black,scale=0.55] (18) at (3, 1) {};
    		\node [circle,fill=black,scale=0.55] (19) at (-2, 3) {};
    		\node [circle,fill=black,scale=0.55] (20) at (-1, 3) {};
    		\node [circle,fill=black,scale=0.55] (21) at (-3, 4) {};
    		\node [circle,fill=black,scale=0.55] (22) at (-3, 5) {};
    		\node [circle,fill=black,scale=0.55] (23) at (-4, 3) {};
    		\node [circle,fill=black,scale=0.55] (24) at (-5, 3) {};
    		\node [circle,fill=black,scale=0.55] (25) at (-3, 2) {};
    		\node [circle,fill=black,scale=0.55] (26) at (-3, 1) {};
    		\node [circle,fill=black,scale=0.55] (27) at (-1, -2) {};
    		\node [circle,fill=black,scale=0.55] (28) at (-2, -1) {};
    		\node [circle,fill=black,scale=0.55] (29) at (-1, -4) {};
    		\node [circle,fill=black,scale=0.55] (30) at (-2, -5) {};
    		\node [circle,fill=black,scale=0.55] (31) at (1, -4) {};
    		\node [circle,fill=black,scale=0.55] (32) at (2, -5) {};
    		\node [circle,fill=black,scale=0.55] (33) at (1, -2) {};
    		\node [circle,fill=black,scale=0.55] (34) at (2, -1) {};
    	\end{pgfonlayer}
    	\begin{pgfonlayer}{edgelayer}
    		\draw (0.center) to (2.center);
    		\draw (2.center) to (9.center);
    		\draw (9.center) to (10.center);
    		\draw (10.center) to (17.center);
    		\draw (17.center) to (18.center);
    		\draw (10.center) to (11.center);
    		\draw (11.center) to (12.center);
    		\draw (10.center) to (13.center);
    		\draw (13.center) to (14.center);
    		\draw (10.center) to (15.center);
    		\draw (15.center) to (16.center);
    		\draw (0.center) to (1.center);
    		\draw (1.center) to (5.center);
    		\draw (5.center) to (6.center);
    		\draw (6.center) to (19.center);
    		\draw (19.center) to (20.center);
    		\draw (6.center) to (25.center);
    		\draw (25.center) to (26.center);
    		\draw (6.center) to (23.center);
    		\draw (23.center) to (24.center);
    		\draw (6.center) to (21.center);
    		\draw (21.center) to (22.center);
    		\draw (0.center) to (3.center);
    		\draw (3.center) to (7.center);
    		\draw (7.center) to (8.center);
    		\draw (8.center) to (33.center);
    		\draw (33.center) to (34.center);
    		\draw (8.center) to (31.center);
    		\draw (31.center) to (32.center);
    		\draw (8.center) to (29.center);
    		\draw (29.center) to (30.center);
    		\draw (27.center) to (8.center);
    		\draw (28.center) to (27.center);
    	\end{pgfonlayer}
    \end{tikzpicture}
    \caption{The tree $T_A$.}
    \label{fig:T_A_base}
\end{figure}

\begin{proposition}
$T_A$ has proper thinness equal to 3.
\end{proposition}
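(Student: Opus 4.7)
The plan is to establish the two inequalities $\pthin(T_A) \ge 3$ and $\pthin(T_A) \le 3$.

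For the lower bound, restrict attention to the eleven vertices of any one of the three $T_0$-copies embedded in $T_A$ (one center, its five middle neighbors, and the five pendant leaves). The vertex $v_0$ and the vertices of the other two copies lie outside this set, and no additional edges appear inside the restriction; hence the induced subgraph is exactly $T_0$. Since proper thinness is monotone under induced subgraphs and the preceding proposition gives $\pthin(T_0) > 2$, we conclude $\pthin(T_A) \ge 3$.

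For the upper bound, I would exhibit an explicit ordering and 3-partition that is strongly consistent. The blueprint is a proper 3-thin representation of $T_0$ in which one class $A$ is a path of length four through the center using two of the five legs, while the remaining three legs are split between the other two classes $B$ and $C$. In $T_A$, two of the three copies will contribute their spines to a single path in $A$ passing through $v_0$, of the form
\[
\ell^1_5 - m^1_5 - c_1 - m^1_1 - \ell^1_1 - v_0 - \ell^2_1 - m^2_1 - c_2 - m^2_5 - \ell^2_5,
\]
and the third copy will contribute a disjoint second path in $A$ through $c_3$, using two legs other than the leg terminating at $\ell^3_1$. The secondary legs of all three centers are assigned to $B$ and $C$ following the $T_0$ recipe.

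The principal obstacle is the claw at $v_0$ combined with the linearity of the ordering. The three $v_0$-neighbors $\ell^1_1,\ell^2_1,\ell^3_1$ are pairwise non-adjacent, so if all three belonged to $A$ its induced subgraph would contain $K_{1,3}$, which is forbidden in a proper interval graph. Hence $\ell^3_1 \notin A$, and then for the edge $v_0\ell^3_1$ every middle vertex that is neither a $v_0$-neighbor nor a $\ell^3_1$-neighbor must lie in the unique class different from both $A$ and $\ell^3_1$'s class. In particular, no entire $T_0$-copy can sit between $v_0$ and $\ell^3_1$ in the ordering, for such a copy would require all three classes. The fix is to split copy $3$ across the ordering: place $\ell^3_1$ and $m^3_1$ next to $v_0$ in a common auxiliary class, situate the sub-spine of $c_3$ (together with the two legs it spans, contributing a second path-component of $A$) at one extreme of the order, and distribute the remaining secondary legs of $c_3$ between the auxiliary classes. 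Here the key auxiliary fact is that $T_0$ minus any leaf is properly $2$-thin, which follows directly from Theorem \ref{theorem:characterization_pthin2}, so the residual of copy $3$ indeed fits into two classes. Strong consistency is then verified by a case analysis: triples lying entirely in one copy are governed by the chosen $T_0$-representation (or its $2$-thin variant when a leaf has been extracted), and the finitely many cross-copy triples involve only $v_0$ and its leaf-neighbors (the sole bridges between copies), which by design are either consecutively placed or separated by middle segments whose class assignments were precisely engineered to satisfy both proper-thinness conditions.
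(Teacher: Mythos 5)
Your lower bound is correct and is exactly the argument the paper uses: the eleven vertices of one copy induce precisely $T_0$ (since $v_0$ lies outside the chosen set), and monotonicity under induced subgraphs gives $\pthin(T_A)\ge 3$. The genuine gap is in the upper bound: the construction you describe provably cannot be completed to a strongly consistent pair, so the deferred ``case analysis'' could never close. Each connected component of a class must occupy consecutive positions of that class's restricted order, in path order; hence in your class $A$ the eleven-vertex path has $v_0$ in its sixth position, flanked by $\ell^1_1$ and $\ell^2_1$ as the nearest $A$-vertices, and the five-vertex component through $c_3$ lies entirely beyond one end of it. The edge $v_0\ell^3_1$ then pins $\ell^3_1$ strictly between $\ell^1_1$ and $\ell^2_1$ (an $A$-vertex between $v_0$ and $\ell^3_1$ would have to be adjacent to $\ell^3_1$, and none is), while the edge $m^3_1 c_3$ pins $m^3_1$ beyond the far end of the eleven-vertex path, next to $c_3$: no $A$-vertex may separate $m^3_1$ from $c_3$, and $m^3_1\notin A$ because $c_3$ already has two $A$-neighbours (Proposition~\ref{prop:same_class_neighbors}). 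This is why your ``fix'' of placing $m^3_1$ next to $v_0$ is unavailable --- it would put half of the eleven-vertex $A$-path strictly between $m^3_1$ and $c_3$. With $m^3_1$ forced to the far end, the edge $\ell^3_1 m^3_1$ spans the entire territory of $c_2$, and at least one of the two remaining classes is excluded from everything strictly between its endpoints (both are excluded if $\ell^3_1$ and $m^3_1$ lie in different classes). Consequently the three secondary legs of $c_2$, whose middle vertices are confined to the interval between $m^2_1$ and $m^2_5$, must all go into the single surviving class; two of those middles then fall on the same side of $c_2$, and a short check shows that one of their leaves has no admissible position (it would either separate another middle from $c_2$, or be separated from its own middle by another vertex of the same class). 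So the blueprint collapses.

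A representation of the required kind does exist, but it has a different shape from yours: in the paper's explicit figure the three centres' five-vertex spines all go into one class (three path components), while $v_0$, the three connector legs, and one further leg per copy are distributed over the other two classes, so that no single edge is forced to stretch across an entire copy. To repair your write-up, replace the blueprint by an explicit ordering and partition of that form and verify strong consistency directly; as it stands, only the inequality $\pthin(T_A)\ge 3$ is established.
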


\begin{proof}
Since $T_A$ has the graph $T_0$ of Theorem~\ref{theorem:forbidden_subtrees} as an induced subgraph, we know that $\pthin(T_A) \geq 3$.
To prove that $\pthin(T_A) \leq 3$, we show a proper $3$-thin representation of $T_A$ (Figure~\ref{fig:T_A_pthin3}). \end{proof}

\begin{figure}[H]
        \centering
        \begin{tikzpicture}[scale=0.9,rotate = 270]
        	\begin{pgfonlayer}{nodelayer}
        		\node [label={$v_0$}] [circle,fill=black,scale=0.55] (0) at (-1.25, -3.5) {};
        		\node [circle,fill=black,scale=0.55] (1) at (1.25, -1.5) {};
        		\node [circle,fill=black,scale=0.55] (2) at (-1.25, -6) {};
        		\node [circle,fill=black,scale=0.55] (3) at (1.25, -2) {};
        		\node [circle,fill=black,scale=0.55] (5) at (1.25, 1.75) {};
        		\node [circle,fill=black,scale=0.55] (6) at (0, 2.25) {};
        		\node [circle,fill=black,scale=0.55] (7) at (1.25, -3.75) {};
        		\node [circle,fill=black,scale=0.55] (8) at (0, -3.25) {};
        		\node [circle,fill=black,scale=0.55] (9) at (-1.25, -7) {};
        		\node [circle,fill=black,scale=0.55] (10) at (0, -7.75) {};
        		\node [circle,fill=black,scale=0.55] (11) at (1.25, -8.25) {};
        		\node [circle,fill=black,scale=0.55] (12) at (1.25, -9.25) {};
        		\node [label={$V^0$}] (40) at (-0.85, -11) {};
        		\node [circle,fill=black,scale=0.55] (13) at (0, -6.75) {};
        		\node [circle,fill=black,scale=0.55] (14) at (0, -5.75) {};
        		\node [circle,fill=black,scale=0.55] (15) at (-1.25, -8.5) {};
        		\node [circle,fill=black,scale=0.55] (16) at (-1.25, -9.5) {};
        		\node [label={$V^2$}] (42) at (1.65, -11) {};
        		\node [circle,fill=black,scale=0.55] (17) at (0, -8.75) {};
        		\node [circle,fill=black,scale=0.55] (18) at (0, -9.75) {};
        		\node [label={$V^1$}] (41) at (0.4, -11) {};
        		\node [circle,fill=black,scale=0.55] (19) at (1.25, 2.75) {};
        		\node [circle,fill=black,scale=0.55] (20) at (1.25, 3.75) {};
        		\node [circle,fill=black,scale=0.55] (21) at (0, 3.25) {};
        		\node [circle,fill=black,scale=0.55] (22) at (0, 4.25) {};
        		\node [circle,fill=black,scale=0.55] (23) at (0, 1.25) {};
        		\node [circle,fill=black,scale=0.55] (24) at (0, 0.5) {};
        		\node [circle,fill=black,scale=0.55] (25) at (-1.25, 3) {};
        		\node [circle,fill=black,scale=0.55] (26) at (-1.25, 4) {};
        		\node [circle,fill=black,scale=0.55] (27) at (-1.25, -1.25) {};
        		\node [circle,fill=black,scale=0.55] (28) at (-1.25, -0.25) {};
        		\node [circle,fill=black,scale=0.55] (29) at (1.25, -4.25) {};
        		\node [circle,fill=black,scale=0.55] (30) at (1.25, -5) {};
        		\node [circle,fill=black,scale=0.55] (31) at (0, -4.5) {};
        		\node [circle,fill=black,scale=0.55] (32) at (0, -5.25) {};
        		\node [circle,fill=black,scale=0.55] (33) at (0, -1) {};
        		\node [circle,fill=black,scale=0.55] (34) at (0, 0) {};
        	\end{pgfonlayer}
        	\begin{pgfonlayer}{edgelayer}
        		\draw (0.center) to (2.center);
        		\draw (2.center) to (9.center);
        		\draw (9.center) to (10.center);
        		\draw (10.center) to (17.center);
        		\draw (17.center) to (18.center);
        		\draw (10.center) to (11.center);
        		\draw (11.center) to (12.center);
        		\draw (10.center) to (13.center);
        		\draw (13.center) to (14.center);
        		\draw (10.center) to (15.center);
        		\draw (15.center) to (16.center);
        		\draw (0.center) to (1.center);
        		\draw (1.center) to (5.center);
        		\draw (5.center) to (6.center);
        		\draw (6.center) to (19.center);
        		\draw (19.center) to (20.center);
        		\draw (6.center) to (25.center);
        		\draw (25.center) to (26.center);
        		\draw (6.center) to (23.center);
        		\draw (23.center) to (24.center);
        		\draw (6.center) to (21.center);
        		\draw (21.center) to (22.center);
        		\draw (0.center) to (3.center);
        		\draw (3.center) to (7.center);
        		\draw (7.center) to (8.center);
        		\draw (8.center) to (33.center);
        		\draw (33.center) to (34.center);
        		\draw (8.center) to (31.center);
        		\draw (31.center) to (32.center);
        		\draw (8.center) to (29.center);
        		\draw (29.center) to (30.center);
        		\draw (27.center) to (8.center);
        		\draw (28.center) to (27.center);
        	\end{pgfonlayer}
        \end{tikzpicture}
        \caption{Proper $3$-thin representation of $T_A$. The vertices are ordered from left to right, and the classes correspond to the horizontal lines.}
        \label{fig:T_A_pthin3}
    \end{figure}
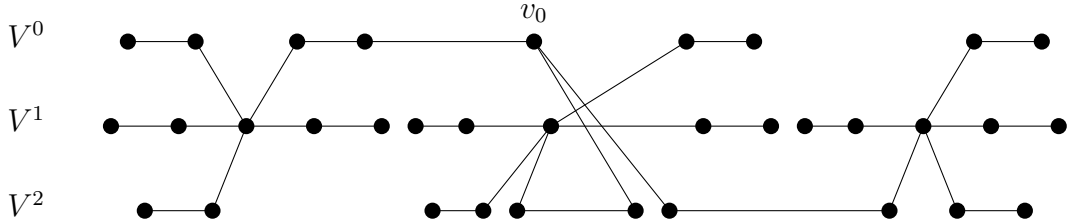

\begin{remark}\label{obs:one_in_each_class}
Since $T_A$ consists of three copies of $T_0$ joined by a new vertex $v_0$ and we already proved that $pthin(T_A) = 3$, we have that for every partition $S = \{ V^0, V^1, V^2 \}$ of the vertices of $T_A$ that is strongly consistent with some vertex ordering, each of those three subtrees has at least one vertex in each of the three classes.
\end{remark}

Using the previous remark, we can prove the following.

\begin{proposition}\label{prop:path1}
Let $S = \{V^0, V^1, V^2\}$ be a class partition of the vertices of $T_A$ and let $\sigma$ be an ordering of the vertices such that $S$ and $\sigma$ are strongly consistent. Then, the induced subgraphs $T_A[V^0]$, $T_A[V^1]$ and $T_A[V^2]$ are not connected.
Moreover, for every simple path $C_0$ of $T_A$, the subgraph $T_A - C_0$ has proper thinness equal to~3.
\end{proposition}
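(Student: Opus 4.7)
The plan is to exploit two features of $T_A$ in parallel: first, $v_0$ is a cut vertex whose removal leaves exactly the three copies of $T_0$ as connected components; second, in any proper $k$-thin partition each class induces a proper interval graph, and since $T_A$ is a tree such a class is in fact a disjoint union of paths (Section~2.1). Combined with Remark~\ref{obs:one_in_each_class}, these two facts settle both assertions with essentially the same mechanism.

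For the first assertion I would argue by contradiction. Suppose some $T_A[V^i]$ is connected. Being a connected induced subgraph of a tree, it is itself a subtree; being also a proper interval graph on a tree, the characterization ``proper interval trees are just paths'' forces $T_A[V^i]$ to be a simple path $P$ in $T_A$. Now any simple path in a tree visits each vertex at most once, so $P$ can traverse $v_0$ at most once, and therefore $V(P) \subseteq \{v_0\} \cup C_a \cup C_b$ for just two of the three connected components $C_0, C_1, C_2$ of $T_A - v_0$. This contradicts Remark~\ref{obs:one_in_each_class}, which guarantees that $V^i$ meets each of the three $T_0$-copies. Hence none of $T_A[V^0], T_A[V^1], T_A[V^2]$ can be connected.

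For the second assertion, let $C_0$ be any simple path in $T_A$. Proper thinness is hereditary under induced subgraphs, and Figure~\ref{fig:T_A_pthin3} exhibits a proper $3$-thin representation of $T_A$, so $\pthin(T_A - C_0) \leq 3$ is immediate. For the matching lower bound I would apply the cut-vertex argument now to $C_0$ itself: as $C_0$ is a simple path, $V(C_0) \subseteq \{v_0\} \cup C_a \cup C_b$, so at least one of the three $T_0$-copies, call it $T_0^\star$, remains entirely inside $T_A - C_0$ as an induced subgraph. Since Theorem~\ref{theorem:forbidden_subtrees} lists $T_0$ as a minimal forbidden induced subgraph for proper thinness~$2$, we obtain $\pthin(T_A - C_0) \geq \pthin(T_0^\star) \geq 3$, and the two inequalities together give the claim.

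The whole argument is driven by a single structural fact, namely that a simple path in $T_A$ can intersect at most two of the three $T_0$-copies; once this is isolated, both claims reduce to straightforward bookkeeping against Remark~\ref{obs:one_in_each_class} and Theorem~\ref{theorem:forbidden_subtrees}. I do not expect a serious obstacle in this proof. The only mildly subtle step is the promotion of a ``connected class'' into a ``simple path in $T_A$'', which requires invoking the proper interval characterization of Section~2.1 rather than merely noting that the class induces a subtree.
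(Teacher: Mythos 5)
Your proposal is correct, and both of its halves ultimately rest on the same structural observation as the paper's proof, namely that a simple path in $T_A$ meets at most two of the three $T_0$-copies. For the second assertion your argument coincides with the paper's (the paper phrases the obstruction as ``$C_0$ would have to contain $v_0$ together with its three neighbors,'' you phrase it as ``a path traverses the cut vertex $v_0$ at most once''; you are slightly more explicit about the upper bound $\pthin(T_A-C_0)\le\pthin(T_A)=3$ coming from heredity, which the paper leaves implicit). For the first assertion, however, your route is genuinely different and more uniform: you promote a hypothetically connected class to a simple path via the chain ``connected class $\Rightarrow$ subtree that is a proper interval graph $\Rightarrow$ path,'' and then reuse the same cut-vertex fact against Remark~\ref{obs:one_in_each_class}. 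The paper instead treats the class containing $v_0$ separately (using Proposition~\ref{prop:same_class_neighbors} to find a neighbor of $v_0$ in another class and a vertex of $V^0$ stranded behind it) and handles the other two classes by noting that any path between distinct copies passes through $v_0$. Your version buys a single argument covering all three classes at the cost of invoking the proper-interval characterization of classes from Section~2.1; the paper's version is more elementary but case-based. Both are valid.
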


\section*{Acknowledgments}

This work was partially supported by UBACyT (20020220300079BA and 20020190100126BA), CONICET (PIP 11220200100084CO), and ANPCyT (PICT-2021-I-A-00755). 

\bibliographystyle{elsarticle-num}
\bibliography{bnm-jabbr,bnm}

\newpage

\appendix

\section{Proofs}

\subsection{Structural characterization}

\subsubsection{Auxiliary results}

We begin by stating and proving some results that we will need to prove the characterization. We will also make use of two basic properties of thinness and proper thinness, namely: (1) if $H$ is an induced subgraph of $G$, then $\thin(H) \leq \thin(G)$ and $\pthin(H) \leq \pthin(G)$ (straightforward from definition); (2) for two vertex disjoint graphs $G_1$ and $G_2$, $\thin(G_1 \cup G_2) = \thin(G_1) \cup \thin(G_2)$ and $\pthin(G_1 \cup G_2) = \pthin(G_1) \cup \pthin(G_2)$~\cite{B-D-thinness}.

\begin{proposition}\label{prop:same_class_neighbors}
Let $T = (V,E)$ be a tree. Let $\sigma$ be an ordering and $S = \{V^0, \dots , V^k\}$ be a class partition of the vertices such that $\sigma$ and $S$ are strongly consistent. Let $v$ be a vertex of $T$ such that $v \in V^i$. Then there are at most two vertices $u$ and $w$ in $V^i \cap N(v)$, and it holds that $u < v < w$.
\end{proposition}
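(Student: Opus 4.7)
The plan is a short direct argument by contradiction, using strong consistency together with the fact that trees are triangle-free.

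First I would observe that it suffices to rule out two neighbors of $v$ in $V^i$ that lie on the same side of $v$ in $\sigma$. So suppose, for contradiction, that $v \in V^i$ has two neighbors $u_1, u_2 \in V^i$ with $u_1 < u_2 < v$. Applying the second (reverse) strong-consistency condition to the triple $(u_1, u_2, v)$: since $v_s = u_2$ and $v_t = v$ lie in the same class $V^i$ and the edge $v u_1 = v_t v_r$ exists, we conclude that $u_1 u_2 = v_r v_s$ is an edge. But then $\{v, u_1, u_2\}$ induces a triangle in $T$, contradicting that $T$ is a tree.

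Symmetrically, suppose $v$ has two neighbors $w_1, w_2 \in V^i$ with $v < w_1 < w_2$. Applying the first (forward) strong-consistency condition to the triple $(v, w_1, w_2)$: since $v_r = v$ and $v_s = w_1$ share class $V^i$ and $v_t v_r = w_2 v \in E$, we get $v_t v_s = w_2 w_1 \in E$, again producing a triangle $\{v, w_1, w_2\}$ in $T$.

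Combining the two cases, $v$ has at most one neighbor in $V^i$ smaller than itself and at most one larger, so $|V^i \cap N(v)| \leq 2$, and if equality holds the two neighbors $u, w$ satisfy $u < v < w$ as claimed. There is no real obstacle here; the only point to be careful about is that strong consistency supplies \emph{both} directional rules, which is exactly what lets us cover the ``both left'' and ``both right'' cases uniformly.
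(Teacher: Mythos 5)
Your proof is correct and follows essentially the same route as the paper's: assume two same-class neighbors of $v$ on the same side of $v$ in $\sigma$, apply the appropriate (strong) consistency condition to force an edge between them, and derive a triangle contradicting that $T$ is a tree. The only cosmetic difference is that you spell out both one-sided cases via the two directional rules, whereas the paper argues one case and notes the other is symmetric.
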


\begin{proof}
Suppose there are two vertices $u$ and $w$ in $V^i \cap N(v)$ such that $v < u < w$. Since $v$ and $u$ belong to the same class and $v$ is adjacent to $w$, then $u$ is adjacent to $w$, which contradicts $T$ being a tree. Equivalently, there cannot be two vertices $u$ and $w$ in $V^i \cap N(v)$ such that $u < w < v$.
\end{proof}

\begin{proposition}\label{prop:neighbors_other_class}
Let $T = (V,E)$ be a tree with $\pthin(T) = 2$. Let $\sigma$ be an ordering and $S = \{V^0, V^1\}$ be a class partition of the vertices such that $\sigma$ and $S$ are strongly consistent. Let $v$ be a vertex of $T$ such that $v \in V^0$. Let $w_1$, $w_2$, and $w_3$ be three vertices belonging to $N(v) \cap V^1$. If $w_1 < w_2 < w_3$, then $d(w_2) = 1$.
\end{proposition}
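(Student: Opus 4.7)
The plan is to argue by contradiction. Suppose $w_2$ has a neighbor $x \neq v$; I will show that strong consistency forces either the edge $vx$ (producing the triangle $v, w_2, x$) or one of the edges $xw_1$, $xw_3$ (producing a $4$-cycle $v, w_i, x, w_2$ for some $i \in \{1,3\}$), both of which contradict $T$ being a tree. First observe that $x \notin \{w_1, w_3\}$, for otherwise one of these triangles is already present. Next, since reversing $\sigma$ yields another strongly consistent ordering (under which the roles of $w_1$ and $w_3$ are swapped), I may assume without loss of generality that $v < w_2$ in $\sigma$.

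Now split on the position of $x$. If $x < w_1$, apply condition (ii) of strong consistency to the triple $x < w_1 < w_2$: the same-class pair $\{w_1, w_2\} \subset V^1$ together with the edge $w_2 x$ forces $x w_1 \in E$, the $4$-cycle contradiction. Symmetrically, if $x > w_3$, condition (i) applied to $w_2 < w_3 < x$ (same-class pair $\{w_2, w_3\}$ and edge $w_2 x$) forces $x w_3 \in E$. Otherwise $x$ lies in $(w_1, w_2) \cup (w_2, w_3)$, and I split further on the class of $x$ and on whether $x < v$ or $x > v$. In every such subcase, the triple to invoke is one of $(v, x, w_3)$, $(v, x, w_2)$, or $(w_1, x, v)$, chosen so that it is ordered correctly and contains a known $v$-$w_i$ edge. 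If $x \in V^0$ (same class as $v$), strong consistency together with that known edge forces an edge $x w_i$ with $i \in \{1,3\}$, yielding a $4$-cycle; if $x \in V^1$ (same class as the corresponding $w_i$), it forces $v x \in E$, yielding the triangle $v, w_2, x$.

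The main obstacle is purely bookkeeping: one must enumerate the positions of $v$ and $x$ relative to $w_1, w_2, w_3$ together with the two possible classes for $x$, and in each of the resulting subcases choose the correct triple and direction of consistency. Once this is done, however, the forbidden edge drops out immediately, and in every configuration it produces either a triangle or a $4$-cycle in $T$, contradicting that $T$ is a tree. Therefore no such $x$ can exist, and $d(w_2) = 1$.
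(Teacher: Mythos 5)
Your proof is correct and follows essentially the same strategy as the paper's: assume a second neighbor of $w_2$ exists and run a case analysis on its position in $\sigma$ and its class, in each case extracting a forced edge that closes a triangle or a $4$-cycle, contradicting acyclicity. The only difference is your use of the reversal symmetry of strong consistency to assume $v < w_2$, which trims the bookkeeping slightly (the paper enumerates all positions of the extra neighbor without this reduction) but does not change the argument; I have checked that the three triples you name do cover all remaining subcases.
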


\begin{proof}
Suppose there exists a vertex $u \neq v$ such that $u$ is adjacent to $w_2$. Since $T$ is a tree, it holds that $u \neq w_1$ and $u \neq w_3$. We can see that in all cases it turns out that there is an adjacency between some pair of vertices that forms a cycle, so it contradicts the fact that $T$ is a tree:
\begin{itemize}
\item If $u \in V^1$:
\begin{itemize}
\item If $u < w_1$, it holds that $u < w_1 < w_2$, and since $u$ and $w_1$ belong to the same class and $u$ is adjacent to $w_2$, then $w_1$ is adjacent to $w_2$.
\item If $u > w_3$, it holds that $w_2 < w_3 < u$, and since $u$ and $w_3$ belong to the same class and $u$ is adjacent to $w_2$, then $w_2$ is adjacent to $w_3$.
\item If $w_1 < u < w_2$:
\begin{itemize}
\item If $v < u$, then $v < u < w_2$, and since $u$ and $w_2$ belong to the same class and $v$ is adjacent to $w_2$, then $v$ is adjacent to $u$.
\item If $u < v$, then $w_1 < u < v$, and since $w_1$ and $u$ belong to the same class and $v$ is adjacent to $w_1$, then $u$ is adjacent to $v$.
\end{itemize}
\item If $w_2 < u < w_3$:
\begin{itemize}
\item If $v < u$, then $v < u < w_3$, and since $u$ and $w_3$ belong to the same class and $v$ is adjacent to $w_3$, then $v$ is adjacent to $u$.
\item If $u < v$, then $w_2 < u < v$, and since $w_2$ and $u$ belong to the same class and $v$ is adjacent to $w_2$, then $u$ is adjacent to $v$.
\end{itemize}
\end{itemize}
\item If $u \in V^0$:
\begin{itemize}
\item If $u < w_1$, then $u < w_1 < w_2$, and since $w_1$ and $w_2$ belong to the same class and $u$ is adjacent to $w_2$, then $u$ is adjacent to $w_1$.
\item If $u > w_3$, then $w_2 < w_3 < u$, and since $w_2$ and $w_3$ belong to the same class and $u$ is adjacent to $w_2$, then $w_3$ is adjacent to $u$.
\item If $w_1 < u < w_2$:
\begin{itemize}
\item If $v < u$, then $v < u < w_3$, and since $v$ and $u$ belong to the same class and $v$ is adjacent to $w_3$, then $u$ is adjacent to $w_3$.
\item If $u < v$, then $w_1 < u < v$, and since $u$ and $v$ belong to the same class and $w_1$ is adjacent to $v$, then $w_1$ is adjacent to $u$.
\end{itemize}
\item If $w_2 < u < w_3$:
\begin{itemize}
\item If $v < u$, then $v < u < w_3$, and since $u$ and $v$ belong to the same class and $v$ is adjacent to $w_3$, then $u$ is adjacent to $w_3$.
\item If $u < v$, then $w_1 < u < v$, and since $u$ and $v$ belong to the same class and $w_1$ is adjacent to $v$, then $w_1$ is adjacent to $u$
\end{itemize}
\end{itemize}
\end{itemize}
\end{proof}

\begin{definition}\label{def:before_after}
Let $T = (V,E)$ be a tree with $\pthin(T) = 2$. Let $\sigma$ be an ordering and $S = \{V^0, V^1\}$ be a class partition of the vertices such that $\sigma$ and $S$ are strongly consistent. Let $v_0$ and $v_1$ be two adjacent vertices such that $v_0 \in V^0$ and $v_1 \in V^1$. Let $u$ be a vertex other than $v_0$ and $v_1$.
\begin{itemize}
\item If either $u \in V^0$ and $u < v_0$, or $u \in V^1$ and $u < v_1$, we say that $u$ is \textbf{before} the edge $v_0 v_1$ in $\sigma$.
\item If, on the other hand, either $u \in V^0$ and $v_0 < u$, or $u \in V^1$ and $v_1 < u$, we say that $u$ is \textbf{after} the edge $v_0 v_1$ in $\sigma$.
\end{itemize}
\end{definition}

\begin{definition}\label{def:str_before_after}
Let $T = (V,E)$ be a tree with $\pthin(T) = 2$. Let $\sigma$ be an ordering and $S = \{V^0, V^1\}$ be a partition into classes of the vertices such that $\sigma$ and $S$ are strongly consistent. Let $v_0$ and $v_1$ be two adjacent vertices such that $v_0 \in V^0$ and $v_1 \in V^1$. Let $u$ be a vertex distinct from $v_0$ and $v_1$.
\begin{itemize}
\item If $u < v_0$ and $u < v_1$, we say that $u$ is \textbf{strictly before} the edge $v_0 v_1$ in $\sigma$.
\item If $v_0 < u$ and $v_1 < u$, we say that $u$ is \textbf{strictly after} the edge $v_0 v_1$ in $\sigma$.
\end{itemize}
\end{definition}

\begin{proposition}\label{prop:str_before_implies_before}
Let $T = (V,E)$ be a tree with $\pthin(T) = 2$. Let $\sigma$ be an ordering and $S = \{V^0, V^1\}$ be a class partition of the vertices such that $\sigma$ and $S$ are strongly consistent. Let $v_0$ and $v_1$ be two adjacent vertices such that $v_0 \in V^0$ and $v_1 \in V^1$. Let $u$ be a vertex distinct from $v_0$ and $v_1$.
\begin{itemize}
\item If $u$ is strictly before the edge $v_0 v_1$ in $\sigma$, then it is before the edge $v_0 v_1$ in $\sigma$.
\item If $u$ is strictly after the edge $v_0 v_1$ in $\sigma$, then it is after the edge $v_0 v_1$ in $\sigma$.
\end{itemize}
\end{proposition}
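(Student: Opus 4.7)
The plan is to handle this by a straightforward case analysis on the class of $u$, since the implication really does fall out of the definitions themselves. Because $S = \{V^0, V^1\}$ is a partition of $V(T)$ and $u$ is distinct from $v_0$ and $v_1$, the vertex $u$ belongs to exactly one of $V^0$ or $V^1$, so I only need to examine these two possibilities and match them against the clauses of Definition~\ref{def:before_after}.

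First I would treat the case in which $u$ is strictly before the edge $v_0 v_1$, so that by Definition~\ref{def:str_before_after} we have both $u < v_0$ and $u < v_1$. If $u \in V^0$, then $u < v_0$ directly satisfies the first clause of Definition~\ref{def:before_after} and $u$ is before $v_0 v_1$. If instead $u \in V^1$, then $u < v_1$ matches the second clause of the same definition and again $u$ is before the edge. The symmetric case, where $u$ is strictly after $v_0 v_1$, is proved identically after reversing the inequalities: whichever of $V^0$ or $V^1$ contains $u$, the corresponding inequality $v_0 < u$ or $v_1 < u$ certifies that $u$ is after $v_0 v_1$ in the sense of Definition~\ref{def:before_after}.

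Honestly, there is no real obstacle here: the proposition is essentially a reformulation of the definitions, and the proof uses neither the strong consistency of $(\sigma, S)$, the tree structure of $T$, nor the hypothesis $\pthin(T) = 2$. Its purpose in the paper is presumably instrumental, since the \emph{strict} versions are easier to verify from positional information in $\sigma$ alone, whereas the \emph{before} and \emph{after} versions are the ones that pair cleanly with the partition when invoking adjacency arguments such as those in Propositions~\ref{prop:same_class_neighbors} and~\ref{prop:neighbors_other_class}.
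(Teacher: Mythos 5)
Your proof is correct and matches the paper's treatment: the paper states this proposition without proof precisely because it follows immediately from comparing Definitions~\ref{def:before_after} and~\ref{def:str_before_after}, which is exactly the two-case definitional check you carry out. Your closing observation about why the proposition is nonetheless useful (converting positional information into the form needed by the adjacency arguments) is also accurate.
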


\begin{proposition}\label{prop:no_adjacent_implies_str_before}
Let $T = (V,E)$ be a tree with $\pthin(T) = 2$. Let $\sigma$ be an ordering and $S = \{V^0, V^1\}$ be a partition into classes of the vertices such that $\sigma$ and $S$ are strongly consistent. Let $v_0$ and $v_1$ be two adjacent vertices such that $v_0 \in V^0$ and $v_1 \in V^1$.
\begin{itemize}
\item Let $u$ be a vertex that is neither adjacent to $v_0$ nor $v_1$ such that $u < v_0$. Then $u$ is strictly before the edge $v_0 v_1$.
\item Similarly, if $u$ is neither adjacent to $v_0$ nor $v_1$ and $v_0 < u$, then $u$ is strictly after the edge $v_0 v_1$.
\end{itemize}
\end{proposition}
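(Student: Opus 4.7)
The plan is to prove both bullet points by contradiction, exploiting the symmetry between them via the reverse ordering (which is also strongly consistent by definition of proper thinness). The core of the argument is a short two-case analysis driven by which class $u$ belongs to.

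For the first bullet, suppose $u<v_0$ but $u$ is \emph{not} strictly before the edge $v_0 v_1$. Since $u$ is distinct from $v_1$, this means $v_1 < u < v_0$. I would then split into cases according to the class of $u$. If $u \in V^0$, then $u$ and $v_0$ lie in the same class with $v_1 < u < v_0$ and $v_1 v_0 \in E$; applying the consistency condition with respect to the reverse of $\sigma$ (equivalently, the second bullet of the definition of proper $k$-thinness applied to the triple $(v_1,u,v_0)$) forces $u v_1 \in E$, contradicting the hypothesis that $u$ is not adjacent to $v_1$. If instead $u \in V^1$, then $v_1$ and $u$ lie in the same class with $v_1 < u < v_0$ and $v_0 v_1 \in E$; the forward consistency condition on the triple $(v_1,u,v_0)$ forces $u v_0 \in E$, contradicting the hypothesis that $u$ is not adjacent to $v_0$. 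In both cases we reach a contradiction, so $u < v_1$ as well, and $u$ is strictly before the edge.

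For the second bullet, I would argue completely analogously. Assuming $v_0 < u$ but $u$ is not strictly after the edge gives $v_0 < u < v_1$. If $u \in V^0$, then $v_0$ and $u$ share a class with $v_0 < u < v_1$, and the forward consistency condition applied to $(v_0,u,v_1)$ yields $u v_1 \in E$, a contradiction. If $u \in V^1$, then $u$ and $v_1$ share a class with $v_0 < u < v_1$, and the reverse consistency condition applied to $(v_0,u,v_1)$ yields $u v_0 \in E$, again a contradiction. Alternatively, this second bullet can be obtained immediately from the first by replacing $\sigma$ with its reverse, noting that the reverse of a strongly consistent ordering is strongly consistent and that ``strictly before'' and ``strictly after'' swap roles.

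There is no real obstacle here beyond organizing the case analysis cleanly: the whole argument hinges on choosing, in each case, the pair of same-class vertices that ``sandwich'' the third vertex, and then reading off the required edge from the appropriate clause in the definition of strong consistency. The hypothesis that $u$ is non-adjacent to both endpoints of the edge is used exactly once in each subcase to produce the desired contradiction.
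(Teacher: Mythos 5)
Your proof is correct and follows essentially the same route as the paper's: assume $v_1 < u < v_0$, split on the class of $u$, and apply the two clauses of strong consistency to the triple $(v_1,u,v_0)$ to force an edge from $u$ to $v_1$ or to $v_0$, contradicting non-adjacency. The only difference is cosmetic — you spell out the second bullet (and its derivation by reversing $\sigma$), which the paper dispatches with ``similarly.''
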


\begin{proof}
Let $u$ be a vertex that is neither adjacent to $v_0$ nor $v_1$ such that $u < v_0$. We want to prove that $u < v_1$. Suppose the inequality does not hold. Then, $v_1 < u < v_0$. But then, since $v_0$ is adjacent to $v_1$:
\begin{itemize}
\item If $u$ and $v_0$ belong to the same class, then $u$ is adjacent to $v_1$.
\item If $u$ and $v_1$ belong to the same class, then $u$ is adjacent to $v_0$.
\end{itemize}
In both cases, a contradiction arises.
\end{proof}

\begin{proposition}\label{prop:does_not_cross_edge}
Let $T = (V,E)$ be a tree with $\pthin(T) = 2$. Let $\sigma$ be an ordering and $S = \{V^0, V^1\}$ be a partition into classes of the vertices such that $\sigma$ and $S$ are strongly consistent. Let $v_0$ and $v_1$ be two adjacent vertices such that $v_0 \in V^0$ and $v_1 \in V^1$. Let $u$ be a vertex that is before the edge $v_0 v_1$ in $\sigma$ and $w$ be a vertex that is after the edge $v_0 v_1$ in $\sigma$. Then there cannot exist a path from $u$ to $w$ that does not pass through either $v_0$ or $v_1$.
\end{proposition}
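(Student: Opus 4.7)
The plan is to argue by contradiction: assume a path $P = u = p_0, p_1, \ldots, p_k = w$ exists in $T$ avoiding $\{v_0, v_1\}$. Since every vertex other than $v_0$ and $v_1$ is unambiguously either before or after the edge $v_0 v_1$, and since $p_0$ is before while $p_k$ is after, there must exist an index $i$ such that $p_i$ is before and $p_{i+1}$ is after; write $x = p_i$ and $y = p_{i+1}$, so $xy \in E$ and $x, y \notin \{v_0, v_1\}$. From here I would derive, using strong consistency of $\sigma$ with $S$, enough edges among $\{x, y, v_0, v_1\}$ to produce a cycle in $T$, contradicting the tree hypothesis.

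First I would treat the same-class cases. If $x, y \in V^0$, then $x < v_0 < y$ by the definitions of before/after; applying the forward rule to the triple $(x, v_0, y)$ (since $x$ and $v_0$ share class $V^0$ and $xy \in E$) yields $v_0 y \in E$, and applying the backward rule (since $v_0$ and $y$ share class) yields $x v_0 \in E$. Together with $xy$, these form a triangle on $\{x, v_0, y\}$, contradicting that $T$ is a tree. The case $x, y \in V^1$ is analogous, producing a triangle on $\{x, v_1, y\}$.

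The mixed-class case $x \in V^0, y \in V^1$ is the technical core (the case $x \in V^1, y \in V^0$ then follows by symmetry, swapping the roles of $V^0, V^1$ and of $v_0, v_1$). Here $x < v_0$ and $v_1 < y$, and I would split on whether $v_0 < v_1$ or $v_1 < v_0$. When $v_0 < v_1$ one has $x < v_0 < v_1 < y$; applying the forward rule to $(x, v_0, y)$ gives $v_0 y \in E$ and the backward rule to $(x, v_1, y)$ gives $v_1 x \in E$, producing together with $xy$ and $v_0 v_1$ a $4$-cycle on $\{x, y, v_0, v_1\}$. When $v_1 < v_0$ I would enumerate the interleavings: $x$ may lie below $v_1$ or between $v_1$ and $v_0$, and $y$ may lie between $v_1$ and $v_0$ or above $v_0$, with a further subcase for the order of $x$ and $y$ when both fall in the interval $(v_1, v_0)$. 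In each interleaving, combining the appropriate forward-or-backward rule applied to a triple using the edge $xy$ with the appropriate rule applied to a triple using the edge $v_0 v_1$ yields the same $4$-cycle on $\{x, y, v_0, v_1\}$.

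The main obstacle I expect is the bookkeeping in the $v_1 < v_0$ subcase, where the relative positions of $x$ and $y$ with respect to the interval spanned by $v_1$ and $v_0$ are not fixed and each interleaving requires the correct choice of forward-versus-backward rule. Once the case distinctions are laid out, however, every instance reduces to a direct application of strong consistency, and the derived $3$- or $4$-cycle always contradicts the tree hypothesis, concluding the proof.
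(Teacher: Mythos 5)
Your proposal is correct and follows essentially the same route as the paper: locate the consecutive pair $p_i, p_{i+1}$ on the path that crosses the edge $v_0v_1$, then split on whether the two lie in the same class or different classes and use strong consistency to force a $3$- or $4$-cycle through $\{v_0,v_1\}$. The only difference is that you spell out the interleavings in the mixed-class case more explicitly than the paper does, which if anything makes the argument cleaner.
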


\begin{proof}
Suppose there exists a path $u = u_0 \dots u_k = w$ such that $u_i \neq v_0$ and $u_i \neq v_1$ $\forall i$. Since $u$ is before $v_0 v_1$ and $w$ is after, there exists some $i$ such that $u_i$ is before $v_0 v_1$ and $u_{i+1}$ is after. We can see that in all cases it turns out that there are adjacencies between the vertices that contradict the fact that $T$ is a tree:
\begin{itemize}
\item If $u_i$ and $u_{i+1}$ belong to the same class, for example, $u_i \in V^0$ and $u_{i+1} \in V^0$, and since $u_i < v_0 < u_{i+1}$, it holds that $v_0$ is adjacent to both $u_i$ and $u_{i+1}$.
\item If $u_i$ and $u_{i+1}$ belong to different classes, for example, $u_i \in V^0$ and $u_{i+1} \in V^1$. Suppose, w.l.o.g., that $u_i < u_{i+1}$:
\begin{itemize}
\item If $u_i < v_0, v_1 < u_{i+1}$, then $v_0$ is adjacent to $u_{i+1}$ and $v_1$ is adjacent to $u_i$.
\item If $v_0 < u_i < u_{i+1} < v_1$ (w.l.o.g.), then $v_0$ is adjacent to $u_{i+1}$ and $v_1$ is adjacent to $u_i$.
\item If $u_i < v_1 < u_{i+1} < v_0$ (w.l.o.g.), then $v_0$ is adjacent to $u_{i+1}$ and $v_1$ is adjacent to $u_i$.
\end{itemize}
\end{itemize}
\end{proof}

\begin{definition}\label{def:nexus}
Let $T = (V,E)$ be a tree. Let $v_0$, $v_1$, $v_2$, and $v_3$ be four distinct vertices of $T$. For $i \in \{1,2,3\}$, we call $C_i$ the unique simple path from $v_0$ to $v_i$, and $C_i' = C_i - \{v_0\}$. We say that vertex $v_0$ is the \textbf{nexus} between vertices $v_1$, $v_2$, and $v_3$ if $C_1' \cap C_2' = \emptyset$, $C_1' \cap C_3' = \emptyset$, and $C_2' \cap C_3' = \emptyset$.
\end{definition}

\begin{proposition}\label{prop:nexus_not_on_path}
Let $T = (V,E)$ be a tree. Let $v_0$, $v_1$, $v_2$, and $v_3$ be four distinct vertices of $T$ such that $v_0$ is the link between $v_1$, $v_2$, and $v_3$. Then, it cannot be the case that there exists a simple path including $v_1$, $v_2$, and $v_3$.
\end{proposition}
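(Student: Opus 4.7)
The plan is to proceed by contradiction. Suppose there is a simple path $P$ in $T$ that visits all of $v_1$, $v_2$, $v_3$. Because $P$ is simple, its vertices lie in a linear order, so one of $v_1, v_2, v_3$ must be between the other two on $P$; since the nexus hypothesis is symmetric in $v_1, v_2, v_3$, I may assume without loss of generality that $v_2$ lies between $v_1$ and $v_3$ on $P$. The subpath of $P$ from $v_1$ to $v_3$ is then a simple $v_1$-$v_3$ path in $T$ that contains $v_2$ as one of its vertices.

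Next, I would exhibit the unique simple $v_1$-$v_3$ path in $T$ explicitly using the nexus structure, and show it does not contain $v_2$. Since $v_0$ is a nexus, the paths $C_1$ and $C_3$ are internally disjoint ($C_1' \cap C_3' = \emptyset$) and both pass through $v_0$. Traversing $C_1$ from $v_1$ to $v_0$ and then $C_3$ from $v_0$ to $v_3$ therefore produces a walk that visits no vertex twice — that is, a simple $v_1$-$v_3$ path whose vertex set is exactly $C_1 \cup C_3$. Because $T$ is a tree, any two vertices are joined by a unique simple path, so this is \emph{the} simple $v_1$-$v_3$ path in $T$, and in particular it must coincide with the subpath of $P$ from $v_1$ to $v_3$ extracted above.

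It remains to check that $v_2 \notin C_1 \cup C_3$. Clearly $v_2 \neq v_0$ by hypothesis, and $v_2 \in C_2'$ since $v_2$ is an endpoint of $C_2$ distinct from $v_0$. The nexus conditions $C_1' \cap C_2' = \emptyset$ and $C_2' \cap C_3' = \emptyset$ then give $v_2 \notin C_1'$ and $v_2 \notin C_3'$. Combined with $v_2 \neq v_0$, this yields $v_2 \notin C_1 \cup C_3$, contradicting the fact that the unique $v_1$-$v_3$ path (which contains $v_2$) has vertex set $C_1 \cup C_3$.

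There is no real obstacle in this argument; the proof rests entirely on the uniqueness of simple paths in a tree and on the internal-disjointness built into the definition of a nexus. The one point that needs a moment of care is checking that the concatenation of $C_1$ (reversed) with $C_3$ is genuinely a simple path rather than merely a walk, but this is immediate from $C_1' \cap C_3' = \emptyset$.
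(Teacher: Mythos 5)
Your proof is correct, but it takes a more direct route than the paper's. You explicitly build the unique $v_1$--$v_3$ path as the concatenation of $C_1$ (reversed) with $C_3$, verify via $C_1' \cap C_3' = \emptyset$ that this concatenation is simple, and then observe that the remaining two disjointness conditions exclude $v_2$ from its vertex set $C_1 \cup C_3$ --- so the unique $v_1$--$v_3$ path cannot contain $v_2$, contradicting the betweenness of $v_2$ on the assumed path $P$. The paper argues differently: it also assumes (w.l.o.g.) that the $v_1$--$v_3$ path $C$ passes through $v_2$, but then tracks the first vertex $w$ of $C_2'$ that meets $C$, places $w$ on one of the two subpaths of $C$, and manufactures \emph{two distinct} simple paths from $v_0$ to $v_3$ (the path $C_3$ and one routed through $w$ and $v_2$), contradicting uniqueness of paths in a tree. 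Both arguments rest on the same two pillars --- tree path uniqueness and the pairwise internal disjointness in the nexus definition --- but yours avoids the auxiliary vertex $w$ and the attendant case analysis about where it sits on $C$, which makes it shorter and, in my view, easier to verify; the paper's version has the mild advantage of never needing to check that a concatenation of two paths is itself simple, though as you note that check is immediate from $C_1' \cap C_3' = \emptyset$.
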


\begin{proof}
Suppose that what we want to prove does not hold. Suppose then, w.l.o.g., that the only simple path $C$ from $v_1$ to $v_3$ passes through $v_2$. But then, if there exists a vertex $v_0$ that is the nexus between the 3 vertices, for $i \in {1,2,3}$, there exist paths $C_i'$ from $v_0$ to the other 3 vertices (disjoint from each other). Let $w$ be the first vertex that appears on the path $C_2'$ (going from $v_0$ to $v_2$) such that $w \in C$ (it may happen that $w = v_0$). Suppose (w.l.o.g.) that $w$ is on the subpath between $v_1$ and $v_2$ (it may happen that $w = v_2$). Then, since $v_2 \notin C_3'$, there exist two distinct paths from $v_0$ to $v_3$: one is $C_3$ and the other is the one that passes through $w$ and through $v_2$. But this contradicts that $T$ is a tree.
\end{proof}

\begin{proposition}\label{prop:degree_4_does_not_cross}
Let $T = (V,E)$ be a tree with $\pthin(T) = 2$. Let $\sigma$ be an ordering and $S = \{V^0, V^1\}$ be a partition into classes of the vertices such that $\sigma$ and $S$ are strongly consistent. Let $v_1$, $v_2$, and $v_3$ be three vertices such that $v_1 < v_2 < v_3$. If $d(v_2) \geq 4$, then there cannot exist a vertex $w$ ($w \neq v_1$ and $w \neq v_3$) adjacent to $v_2$ such that the only simple path from $v_2$ to $v_1$ passes through $w$ and the only simple path from $v_2$ to $v_3$ passes through $w$.
\end{proposition}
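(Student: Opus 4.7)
\medskip
\noindent\textbf{Plan.} I would argue by contradiction: assume such a $w$ exists and derive an impossible edge. First I would verify $d(w) \geq 3$: since $v_1, v_3 \neq w$ and $v_1 \neq v_3$, the paths from $w$ to $v_1$ and from $w$ to $v_3$ leave $w$ through two distinct neighbors, neither of which is $v_2$. Fix WLOG $v_2 \in V^0$. By Proposition~\ref{prop:same_class_neighbors}, $v_2$ has at most two $V^0$-neighbors, so the hypothesis $d(v_2) \geq 4$ forces $|V^1 \cap N(v_2)| \geq 2$; write $V^1 \cap N(v_2) = \{y_1 < y_2 < \cdots < y_p\}$ with $p \geq 2$.

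\medskip
\noindent\textbf{Key observation.} For any $y_j \in V^1 \cap N(v_2)$ with $y_j \neq w$, all vertices of the subtree $T_w$ (the component of $w$ in $T - v_2$) lie on the same side of the between-class edge $v_2 y_j$: a path inside $T_w$ avoids $v_2$ (staying in $T_w$) and avoids $y_j$ (which is in a different component of $T - v_2$), so Proposition~\ref{prop:does_not_cross_edge} applies. I would choose $y_j$ so that $w$, and hence all of $T_w$, lies on the ``before'' side of $v_2 y_j$. If $w \in V^0$, then $w$ is an extremal $V^0$-neighbor of $v_2$ by Proposition~\ref{prop:same_class_neighbors}, and after possibly replacing $\sigma$ by its (also strongly consistent) reverse I may assume $w < v_2$; any $y_j$ then works. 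If $w \in V^1$, then $w \in \{y_1,\ldots,y_p\}$; since $d(w) \geq 3 > 1$, Proposition~\ref{prop:neighbors_other_class} applied to $v_2$ forbids $w$ from being a middle $y_j$, so $w \in \{y_1, y_p\}$, WLOG $w = y_1$, and I pick $y_j = y_2$, which exists because $p \geq 2$. In either case $v_3 \in T_w$ is before $v_2 y_j$; since $v_3 > v_2$, $v_3 \notin V^0$, so $v_3 \in V^1$ and $v_3 < y_j$.

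\medskip
\noindent\textbf{Contradiction and main obstacle.} Now apply the strongly consistent definition to the triple $v_2 < v_3 < y_j$: the vertices $v_3$ and $y_j$ belong to the same class $V^1$ and $y_j v_2 \in E$, so the second clause forces $v_2 v_3 \in E$. But the path from $v_2$ to $v_3$ in $T$ passes through $w \neq v_3$, so $v_2 \not\sim v_3$, a contradiction. The step I anticipate as the main obstacle is not a calculation but choosing the right final argument: one is tempted to use Proposition~\ref{prop:does_not_cross_edge} to place $v_1$ and $v_3$ on opposite sides of a single edge, but since both lie in $T_w$ this proposition always puts them on the \emph{same} side, so it cannot close the argument by itself. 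The decisive idea is to use Proposition~\ref{prop:does_not_cross_edge} only to sandwich $v_3$ strictly between $v_2$ and a $V^1$-neighbor $y_j$ of $v_2$, and then invoke strong consistency directly on the triple $(v_2, v_3, y_j)$. This is also the place where the hypothesis $d(v_2) \geq 4$ is used: it forces $p \geq 2$, which in Case B guarantees an admissible $y_j \neq w$.
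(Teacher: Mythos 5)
Your proof is correct, but its closing argument takes a genuinely different route from the paper's. Both proofs fix $v_2\in V^0$, use Proposition~\ref{prop:same_class_neighbors} to extract a neighbor $x\in V^1\cap N(v_2)$ with $x\neq w$, and reason about the edge $v_2x$ via Proposition~\ref{prop:does_not_cross_edge}. The paper then finishes in one stroke: since $v_1<v_2<v_3$ and neither $v_1$ nor $v_3$ is adjacent to $v_2$ or to $x$ (both lie in the component of $T-v_2$ containing $w$, hence at distance at least $2$ from $v_2$ and in a different component from $x$), Proposition~\ref{prop:no_adjacent_implies_str_before} places $v_1$ strictly before and $v_3$ strictly after the edge $v_2x$, i.e.\ on \emph{opposite} sides, while the path $v_1\cdots w\cdots v_3$ avoids $v_2$ and $x$, contradicting Proposition~\ref{prop:does_not_cross_edge} directly. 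So the ``opposite sides'' placement you dismissed as unattainable is exactly what Proposition~\ref{prop:no_adjacent_implies_str_before} delivers for free; it is the combination of forced opposite sides with forced same side that is the contradiction. Your alternative --- choosing $y_j$ so that the whole component of $w$ lies before $v_2y_j$, deducing $v_2<v_3<y_j$ with $v_3,y_j\in V^1$, and invoking strong consistency on that triple to force the nonexistent edge $v_2v_3$ --- is valid, but it costs you a case analysis on the class and position of $w$ and an appeal to Proposition~\ref{prop:neighbors_other_class} that the paper's route avoids entirely.

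One small inaccuracy: your claim that $d(w)\geq 3$ is not justified, since the paths from $w$ to $v_1$ and from $w$ to $v_3$ may leave $w$ through the \emph{same} neighbor (for instance if $v_1$ and $v_3$ lie in the same component of $T_w-w$); only $d(w)\geq 2$ is guaranteed. Fortunately $d(w)>1$ is all that your application of Proposition~\ref{prop:neighbors_other_class} in Case~B requires, so the argument survives.
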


\begin{proof}
Suppose, w.l.o.g., that $v_2 \in V^0$. Suppose there exists a vertex $w$ adjacent to $v_2$ such that the only simple path from $v_2$ to $v_1$ and the only simple path from $v_2$ to $v_3$ pass through $w$. Since $d(v_2) \geq 4$, by Proposition~\ref{prop:same_class_neighbors} there exists a vertex $x \in V^1$ adjacent to $v_2$ such that $x \neq w$. Furthermore, since $v_1 < v_2$ and they are not adjacent, by Proposition~\ref{prop:no_adjacent_implies_str_before}, $v_1$ is strictly before edge $v_2 x$ (and by Proposition~\ref{prop:str_before_implies_before}, in particular it is before). Similarly, $v_3$ is after edge $v_2 x$. By Proposition~\ref{prop:does_not_cross_edge}, either there cannot exist a simple path from $w$ to $v_1$ that does not pass through $v_2$ or $x$ (if $w$ is after $v_2 x$), or there cannot exist a simple path from $w$ to $v_3$ that does not pass through $v_2$ or $x$ (if $w$ is before $v_2 x$).
\end{proof}

\begin{corollary}\label{coro:degree_4_outside_the_path}
Let $T = (V,E)$ be a tree with $\pthin(T) = 2$. Let $\sigma$ be an ordering and $S = \{V^0, V^1\}$ be a class partition of the vertices such that $\sigma$ and $S$ are strongly consistent. Let $v_1$, $v_2$, and $v_3$ be three vertices such that $v_1 < v_2 < v_3$. If $d(v_2) \geq 4$, then the only simple path from $v_1$ to $v_3$ passes through $v_2$.
\end{corollary}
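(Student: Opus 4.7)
The plan is a proof by contradiction. I would assume that the unique simple path $P$ from $v_1$ to $v_3$ in $T$ does not pass through $v_2$, and derive a contradiction via Proposition~\ref{prop:degree_4_does_not_cross}. Since $T$ is a tree and $v_2 \notin P$, the two simple paths $P_1$ from $v_2$ to $v_1$ and $P_3$ from $v_2$ to $v_3$ must share a nontrivial common initial segment starting at $v_2$. Let $w$ denote the unique vertex at distance $1$ from $v_2$ along this common segment; then $w$ is adjacent to $v_2$ and lies on both $P_1$ and $P_3$.

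The main case is $w \notin \{v_1, v_3\}$: here the triple $(v_1, v_2, v_3)$ together with this $w$ satisfies the hypothesis of Proposition~\ref{prop:degree_4_does_not_cross}, directly contradicting its conclusion and giving the corollary.

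The remaining cases $w = v_1$ and $w = v_3$ have to be handled separately; by the symmetry $v_1 \leftrightarrow v_3$ obtained by reversing $\sigma$ it is enough to treat $w = v_1$. In this case $v_1$ is adjacent to $v_2$ and lies in the interior of $P_3$, so $v_1$ has a neighbor $v^\star$ along $P_3$ distinct from $v_2$. If $v^\star < v_2$ in $\sigma$, I would apply Proposition~\ref{prop:degree_4_does_not_cross} to the triple $(v^\star, v_2, v_3)$ with the witness $w' = v_1$: both of the paths from $v_2$ (to $v^\star$ and to $v_3$) begin with the edge $v_2 v_1$, and $v_1 \notin \{v^\star, v_3\}$, so the proposition again yields a contradiction. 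The main obstacle will be the residual case $v^\star > v_2$, where the direct appeal to Proposition~\ref{prop:degree_4_does_not_cross} fails because $v_1$ would have to serve as an endpoint of the triple; here I expect to combine Propositions~\ref{prop:same_class_neighbors}, \ref{prop:neighbors_other_class} and \ref{prop:does_not_cross_edge}, choosing an auxiliary neighbor $u$ of $v_2$ in the class opposite to $v_2$ (which exists because $d(v_2) \geq 4$) and tracking whether $v_1$, $v^\star$ and $v_3$ are before or after the bipartite edge $v_2 u$, to rule out this configuration as well.
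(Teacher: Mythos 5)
Your instinct to split on the common first vertex $w$ of the paths from $v_2$ to $v_1$ and from $v_2$ to $v_3$ is sound, and both your main case ($w \notin \{v_1, v_3\}$) and your sub-case $v^\star < v_2$ are correct. You are in fact being more careful than the paper itself: its entire proof of this corollary is the assertion that Proposition~\ref{prop:degree_4_does_not_cross} forces the paths from $v_2$ toward $v_1$ and toward $v_3$ to leave $v_2$ through two \emph{distinct} neighbors, an inference that silently skips exactly the cases $w = v_1$ and $w = v_3$ that you isolate.

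The residual case $v^\star > v_2$, however, is a genuine gap, and no combination of Propositions~\ref{prop:same_class_neighbors}, \ref{prop:neighbors_other_class} and \ref{prop:does_not_cross_edge} will close it, because the statement is false in precisely that configuration. Take $T$ on $\{a, b, c, v_1, v_2, v_3\}$ with $v_2$ adjacent to $a, b, c, v_1$ (so $d(v_2) = 4$) and $v_3$ adjacent to $v_1$; order the vertices $a < b < v_1 < v_2 < c < v_3$ and set $V^0 = \{a, v_2, c\}$, $V^1 = \{b, v_1, v_3\}$. Checking all triples $r < s < t$ with $rt \in E(T)$, the only one in which $r, s$ or $s, t$ share a class is $b < v_1 < v_2$ with the edge $b v_2$, and there the required edge $v_1 v_2$ is present; so the ordering and partition are strongly consistent, and since $T$ contains a claw, $\pthin(T) = 2$. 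Yet $v_1 < v_2 < v_3$, $d(v_2) \geq 4$, and the unique $v_1$--$v_3$ path is the single edge $v_1 v_3$, which avoids $v_2$. Here $w = v_1$ and $v^\star = v_3 > v_2$, i.e., exactly your unresolved case. So the corollary needs an extra hypothesis excluding the situation where $v_1$ (or, symmetrically, $v_3$) is a neighbor of $v_2$ separating $v_2$ from the other endpoint; in the paper's application (Proposition~\ref{prop:pthin2_has_path}) all three vertices have degree at least~4, which gives additional leverage to rule that situation out, but as literally stated the claim you were asked to prove does not hold, and your plan for the residual case cannot succeed.
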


\begin{proof}
By the above proposition, there cannot exist a vertex $w$ ($w \neq v_1$ and $w \neq v_3$) adjacent to $v_2$ such that the only simple path from $v_2$ to $v_1$ passes through $w$ and the only simple path from $v_2$ to $v_3$ passes through $w$. Then, there exist two vertices $u$ and $v$ ($u \neq v$) adjacent to $v_2$ such that the only simple path from $v_2$ to $v_1$ passes through $u$ and the only simple path from $v_2$ to $v_3$ passes through $v$. Therefore, there is a simple path $v_1 \dots u v_2 v \dots v_3$.
\end{proof}

\begin{corollary}\label{coro:degree_4_no_nexus}
Let $T = (V,E)$ be a tree with $\pthin(T) = 2$. Let $\sigma$ be an ordering and $S = \{V^0, V^1\}$ be a class partition of the vertices such that $\sigma$ and $S$ are strongly consistent. Let $v_1$, $v_2$, and $v_3$ be three vertices such that $v_1 < v_2 < v_3$. If $d(v_2) \geq 4$, then there cannot exist a vertex $v_0$ that is the nexus between vertices $v_1$, $v_2$, and $v_3$.
\end{corollary}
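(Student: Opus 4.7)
The plan is to derive the statement as an immediate consequence of the two results that precede it, namely Corollary~\ref{coro:degree_4_outside_the_path} and Proposition~\ref{prop:nexus_not_on_path}. First, I would argue by contradiction: suppose that such a nexus vertex $v_0$ exists between $v_1$, $v_2$, and $v_3$.

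Next, I would invoke Corollary~\ref{coro:degree_4_outside_the_path}, whose hypotheses are exactly the ones given here (three ordered vertices $v_1<v_2<v_3$ in a strongly consistent ordering/partition of a proper $2$-thin tree, with $d(v_2)\geq 4$). That corollary yields a simple path in $T$ that starts at $v_1$, passes through $v_2$, and ends at $v_3$; in particular, this simple path contains all three of $v_1$, $v_2$, and $v_3$.

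The final step is to observe that the existence of a simple path containing $v_1$, $v_2$, and $v_3$ contradicts Proposition~\ref{prop:nexus_not_on_path}, which states precisely that if $v_0$ is the nexus between $v_1$, $v_2$, and $v_3$, then no simple path can include all three of them. This contradiction closes the argument.

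The only real content here is checking that the hypotheses of the two invoked results are met verbatim, so there is no substantive obstacle; the chain of reasoning is essentially mechanical once the previous corollary and proposition are in hand. The only thing to be slightly careful about is that Proposition~\ref{prop:nexus_not_on_path} is a purely structural statement about trees (not involving the ordering), so it applies to $T$ directly with no additional verification.
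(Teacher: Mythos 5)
Your argument is correct and matches the paper's proof exactly: both derive from Corollary~\ref{coro:degree_4_outside_the_path} a simple path through $v_1$, $v_2$, $v_3$ and then contradict Proposition~\ref{prop:nexus_not_on_path}. No issues.
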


\begin{proof}
By the previous Corollary, let $C$ be the unique simple path from $v_1$ to $v_3$ that passes through $v_2$. But this contradicts Proposition~\ref{prop:nexus_not_on_path}.
\end{proof}

\begin{proposition}\label{prop:degree_3_ady_does_not_cross}
Let $T = (V,E)$ be a tree with $\pthin(T) = 2$. Let $\sigma$ be an ordering and $S = \{V^0, V^1\}$ be a class partition of the vertices such that $\sigma$ and $S$ are strongly consistent. Let $v_1$, $v_2$, and $v_3$ be three vertices such that $v_1 < v_2 < v_3$. If $d(v_2) = 3$, then there cannot exist a vertex $w$ ($w \neq v_1$ and $w \neq v_3$) adjacent to $v_2$ and a vertex $u$ ($u \neq v_1$, $u \neq v_2$ and $u \neq v_3$) adjacent to $w$ such that the only simple path from $v_2$ to $v_1$ passes through $u$ and the only simple path from $v_2$ to $v_3$ passes through $u$.
\end{proposition}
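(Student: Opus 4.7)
The plan is to mimic the argument used for Proposition~\ref{prop:degree_4_does_not_cross}, adapting it to the present degree-3 setting. Assume for contradiction that such vertices $w$ and $u$ exist, and without loss of generality that $v_2 \in V^0$. Since $d(v_2)=3$, Proposition~\ref{prop:same_class_neighbors} guarantees that at most two neighbors of $v_2$ lie in $V^0$, so $v_2$ has at least one neighbor $x \in V^1$. I would use the cross-class edge $v_2 x$ together with Proposition~\ref{prop:does_not_cross_edge} to derive a contradiction, treating uniformly the subcases where $x = w$ (possible only when $w \in V^1$) and where $x$ can be chosen distinct from $w$.

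First I would check that neither $v_1$ nor $v_3$ is adjacent to $v_2$ or $x$. The unique $v_2$-to-$v_1$ path is $v_2 - w - u - \dots - v_1$ and has length at least three because $v_1 \notin \{w,u,v_2\}$, so $v_1$ is not adjacent to $v_2$. If $x = w$, an edge between $v_1$ and $w$ together with the subpath $w - u - \dots - v_1$ would form a cycle in $T$; if $x \neq w$, then the component of $T - v_2$ containing $x$ is disjoint from the $w$-branch that contains $v_1$, so $v_1 \notin N(x)$. The same argument applies to $v_3$. Since $v_1 < v_2 < v_3$, Propositions~\ref{prop:no_adjacent_implies_str_before} and~\ref{prop:str_before_implies_before} then give that $v_1$ is before the edge $v_2 x$ and $v_3$ is after it in $\sigma$.

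Next I would exhibit a path from $v_1$ to $v_3$ avoiding both $v_2$ and $x$, which by Proposition~\ref{prop:does_not_cross_edge} yields the contradiction. Because the $v_2$-to-$v_1$ and $v_2$-to-$v_3$ paths both pass through $u$, the unique simple $v_1$-to-$v_3$ path in $T$ is the concatenation of the $v_1$-to-$u$ and $u$-to-$v_3$ subpaths, which does not revisit $v_2$. When $x = w$, this path lies entirely on the $u$-side of the edge $wu$, hence avoids $w = x$; when $x \neq w$, the vertex $x$ sits in a different branch of $v_2$ than $v_1$ and $v_3$, and so is again not on the path. In either case the hypotheses of Proposition~\ref{prop:does_not_cross_edge} are violated.

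The main obstacle I anticipate is that, unlike in the degree-4 case, we cannot always choose $x \in V^1 \cap N(v_2)$ distinct from $w$, so the cross-class edge we work with may be $v_2 w$ itself. The delicate point is therefore to verify uniformly across both subcases that the $v_1$-to-$v_3$ path through $u$ avoids both endpoints of the chosen edge; once this is settled, the rest of the argument parallels Proposition~\ref{prop:degree_4_does_not_cross} verbatim.
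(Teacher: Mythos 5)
Your proof is correct and follows essentially the same route as the paper's: both select a cross-class edge at $v_2$ (splitting on whether a $V^1$-neighbour of $v_2$ distinct from $w$ exists), place $v_1$ before and $v_3$ after that edge via Propositions~\ref{prop:no_adjacent_implies_str_before} and~\ref{prop:str_before_implies_before}, and then contradict Proposition~\ref{prop:does_not_cross_edge}. The only (harmless) difference is that you apply the crossing contradiction directly to the pair $(v_1,v_3)$, whereas the paper applies it to $w$ (or, when the edge is $v_2w$, to $u$) paired with whichever of $v_1,v_3$ lies on the opposite side.
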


\begin{proof}
Suppose, w.l.o.g., that $v_2 \in V^0$. Suppose that there exist $w$ and $u$.
\begin{itemize}
\item If there exists a vertex $x \in V^1$ adjacent to $v_2$ such that $x \neq w$. Then, since $v_1 < v_2$ and they are not adjacent, by Proposition~\ref{prop:no_adjacent_implies_str_before}, $v_1$ is strictly before edge $v_2 x$ (and by Proposition~\ref{prop:str_before_implies_before}, in particular it is before). Similarly, $v_3$ is after edge $v_2 x$. By Proposition~\ref{prop:does_not_cross_edge}, either there cannot exist a simple path from $w$ to $v_1$ that does not pass through $v_2$ or $x$ (if $w$ is after $v_2 x$), or there cannot exist a simple path from $w$ to $v_3$ that does not pass through $v_2$ or $x$ (if $w$ is before $v_2 x$). \item If not, the only vertex in $V^1$ adjacent to $v_2$ is $w$. Since $v_1 < v_2$ and $v_1$ is not adjacent to either $v_2$ or $w$, by Proposition~\ref{prop:no_adjacent_implies_str_before}, $v_1$ is strictly before edge $v_2 w$ (and by Proposition~\ref{prop:str_before_implies_before}, in particular it is before). Analogously, $v_3$ is after edge $v_2 w$. By Proposition~\ref{prop:does_not_cross_edge}, either there cannot exist a simple path from $u$ to $v_1$ that does not pass through $v_2$ or $w$ (if $u$ is after $v_2 w$), or there cannot exist a simple path from $u$ to $v_3$ that does not pass through $v_2$ or $w$ (if $u$ is before $v_2 w$).
\end{itemize}
\end{proof}

\begin{corollary}\label{coro:degree_3_ady_off_path}
Let $T = (V,E)$ be a tree with $\pthin(T) = 2$. Let $\sigma$ be an ordering and $S = \{V^0, V^1\}$ be a class partition of the vertices such that $\sigma$ and $S$ are strongly consistent. Let $v_1$, $v_2$, and $v_3$ be three vertices such that $v_1 < v_2 < v_3$. If $d(v_2) = 3$, then either the only simple path from $v_1$ to $v_3$ passes through $v_2$, or there exists some vertex $v_0$ on the simple path from $v_1$ to $v_3$ such that $v_0$ is adjacent to $v_2$.
\end{corollary}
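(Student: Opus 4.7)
The plan is to argue by contradiction. Let $P$ denote the unique simple path from $v_1$ to $v_3$ in $T$; suppose $v_2 \notin P$ and that no vertex of $P$ is adjacent to $v_2$, and derive a contradiction.

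Because $v_2 \notin P$ and $T$ is a tree, the unique tree paths from $v_2$ to $v_1$ and from $v_2$ to $v_3$ share a nonempty common prefix starting at $v_2$. Let $w$ be the neighbor of $v_2$ that begins this prefix. Since no neighbor of $v_2$ lies on $P$, in particular $w \notin P$, so the shared prefix extends past $w$; let $u$ be the vertex immediately following $w$ along it. Then $u$ is adjacent to $w$, $u \neq v_2$, and both the unique path from $v_2$ to $v_1$ and the unique path from $v_2$ to $v_3$ pass through $u$.

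In the generic situation $u \notin \{v_1, v_3\}$, the vertices $(w,u)$ satisfy exactly the hypotheses of Proposition~\ref{prop:degree_3_ady_does_not_cross}, which immediately yields the contradiction.

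It remains to handle the symmetric corner cases $u = v_1$ and $u = v_3$. Take $u = v_1$: then the path from $v_2$ to $v_1$ is exactly $v_2, w, v_1$, and $v_1$ lies on the unique tree path from $v_2$ to $v_3$. In the spirit of the proof of Proposition~\ref{prop:degree_3_ady_does_not_cross}, I would first use triple analyses, examining $(v_1, v_2, w)$, $(v_1, w, v_2)$, and $(w, v_1, v_2)$ according to the position of $w$, to show that $w$ must lie in the same class as $v_2$ and satisfy $w < v_2$ in the ordering. Then, using Propositions~\ref{prop:same_class_neighbors}, \ref{prop:neighbors_other_class}, and~\ref{prop:does_not_cross_edge}, I would case-analyse on the class of $v_1$ (and hence $v_3$, which turns out to be forced into the same class as $v_1$) and on the classes and positions of the two remaining neighbors of $v_2$ (which exist since $d(v_2)=3$). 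In each sub-case, one locates a triple whose edge pattern contradicts strong consistency of the ordering.

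The main obstacle I foresee is exactly this last step: while Proposition~\ref{prop:degree_3_ady_does_not_cross} dispatches the generic case at once, the corner cases $u \in \{v_1, v_3\}$ lie outside its scope (it explicitly excludes such $u$), and ruling them out requires a delicate case-by-case verification that no strongly consistent 2-thin ordering with $v_1 < v_2 < v_3$ can realize a configuration in which $v_1$ (or $v_3$) sits on the tree path between $v_2$ and the other endpoint while none of $v_2$'s neighbors belong to $P$.
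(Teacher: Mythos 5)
Your generic case is precisely the paper's proof: the paper lets $v$ be the vertex of the $v_1$--$v_3$ path closest to $v_2$, writes the path from $v_2$ to $v$ as $v_2, w, u, \dots$ (it has at least three vertices because $v_2$ is neither on the $v_1$--$v_3$ path nor adjacent to it), and invokes Proposition~\ref{prop:degree_3_ady_does_not_cross}. The paper does \emph{not} separate out $u=v_1$ or $u=v_3$; it applies the proposition as if its hypotheses were automatically met. So the corner case you isolate is a genuine (if small) omission in the paper's own argument, and your observation that $w\neq v_1,v_3$ always holds (since $w$ is adjacent to $v_2$ while no vertex of the path is) but $u\in\{v_1,v_3\}$ is not excluded, is correct and to your credit.

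The problem is that your treatment of that corner case is a plan rather than a proof: you assert that a triple analysis ``shows $w$ lies in the same class as $v_2$ with $w<v_2$'' and that each remaining sub-case ``locates a contradicting triple,'' without carrying any of it out, and you yourself flag this as the main obstacle. The gap closes more cheaply than your sketch suggests. Say $u=v_1$ and, w.l.o.g., $v_2\in V^0$. If $v_2$ has a neighbour $x\in V^1$ with $x\neq w$, then the first bullet of the proof of Proposition~\ref{prop:degree_3_ady_does_not_cross} goes through verbatim (it nowhere uses $u\notin\{v_1,v_3\}$): $v_1$ and $v_3$ are non-adjacent to both $v_2$ and $x$ (a tree has no $4$-cycles), so by Propositions~\ref{prop:no_adjacent_implies_str_before} and~\ref{prop:str_before_implies_before} they lie on opposite sides of the edge $v_2x$, and whichever side $w$ lies on, Proposition~\ref{prop:does_not_cross_edge} is violated by the path from $w$ to $v_1$ or to $v_3$. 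Otherwise, by Proposition~\ref{prop:same_class_neighbors} the only way $v_2$ can fail to have such an $x$ is that $w$ itself is the unique neighbour of $v_2$ in $V^1$. Then $v_3$ is strictly after the edge $v_2w$, the path $v_1\cdots v_3$ avoids both $v_2$ and $w$, so Proposition~\ref{prop:does_not_cross_edge} forces $v_1$ to be after $v_2w$ as well; since $v_1<v_2$ this gives $v_1\in V^1$ and $w<v_1<v_2$, and the triple $(w,v_1,v_2)$ together with $v_2w\in E$ forces the non-edge $v_1v_2$ to be present, a contradiction. The case $u=v_3$ is symmetric. (Note that, contrary to your sketch, $w$ ends up in the \emph{other} class from $v_2$ in the only sub-case that survives the first reduction.) With this inserted, your proof is complete and coincides with the paper's up to the added care at the boundary.
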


\begin{proof}
Let $C_0$ be the unique simple path from $v_1$ to $v_3$. Suppose what we want to prove is not true, that is, $v_2 \notin C_0$ and there is no vertex in $C_0$ that is adjacent to $v_2$. Let $v$ be the vertex in $C_0$ closest to $v_2$. Then, the unique path from $v_2$ to $v_1$ passes through $v$ and the unique simple path passes through $v$. Let $C_2$ be the unique simple path from $v_2$ to $v$. Since $C_2$ has at least 3 vertices, $C_2 = v_2, w, u, \dots$. But then, these vertices $w$ and $u$ satisfy what we said in the previous Proposition could not happen, so we arrive at a contradiction. \end{proof}

\begin{corollary}\label{coro:degree_3_ady_no_nexus}
Let $T = (V,E)$ be a tree with $\pthin(T) = 2$. Let $\sigma$ be an ordering and $S = \{V^0, V^1\}$ be a class partition of the vertices such that $\sigma$ and $S$ are strongly consistent. Let $v_1$, $v_2$, and $v_3$ be three vertices such that $v_1 < v_2 < v_3$. If $d(v_2) = 3$ and there exists a vertex $v_0$ that is the nexus between vertices $v_1$, $v_2$, and $v_3$, then $v_0$ is adjacent to $v_2$.
\end{corollary}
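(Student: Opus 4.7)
The plan is to invoke Corollary \ref{coro:degree_3_ady_off_path} on the triple $v_1 < v_2 < v_3$ and then use the nexus hypothesis to rule out all but the desired configuration. Since $d(v_2) = 3$, that corollary gives one of two alternatives: either the unique simple path from $v_1$ to $v_3$ passes through $v_2$, or there exists a vertex $v$ on that path with $v \in N(v_2)$. The first alternative would exhibit a simple path containing all of $v_1$, $v_2$, $v_3$ at once, which directly contradicts Proposition \ref{prop:nexus_not_on_path} applied to the nexus $v_0$. Hence only the second alternative can occur, yielding a vertex $v$ on the unique simple path $P$ from $v_1$ to $v_3$ that is adjacent to $v_2$.

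Next, I would describe $P$ concretely using the nexus structure. Writing $C_1$, $C_2$, $C_3$ for the unique simple paths from $v_0$ to $v_1$, $v_2$, $v_3$, respectively (with $C_i' = C_i - \{v_0\}$ pairwise disjoint by definition of nexus), the path $P$ must be the concatenation of $C_1$ (reversed) with $C_3$, so $V(P) = C_1' \cup \{v_0\} \cup C_3'$. Therefore $v$ lies in $C_1'$, or in $C_3'$, or equals $v_0$.

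The key step is to exclude $v \in C_1' \cup C_3'$. Suppose, for a contradiction, that $v \in C_1'$ (the case $v \in C_3'$ is symmetric). Let $u$ be the neighbor of $v_2$ along $C_2$. Because $C_1' \cap C_2' = \emptyset$ and $v \in C_1'$, we have $v \neq u$, so $v_2 v$ and $v_2 u$ are two distinct edges incident to $v_2$. Following $v_2 v$ and then the portion of $C_1$ from $v$ back to $v_0$ produces a simple walk from $v_2$ to $v_0$ different from $C_2$ (their first edges already differ), which yields two distinct simple $v_2$–$v_0$ paths in $T$ and hence a cycle, contradicting that $T$ is a tree. Thus $v = v_0$, which shows $v_0 \in N(v_2)$.

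I do not anticipate any real obstacle: the argument is a routine combination of Corollary \ref{coro:degree_3_ady_off_path}, Proposition \ref{prop:nexus_not_on_path}, and uniqueness of paths in trees. The only care needed is to correctly identify $P$ from the nexus decomposition and to note that the neighbor of $v_2$ along $C_2$ lies outside $C_1' \cup C_3'$, so that the alternate $v_2$–$v_0$ path genuinely differs from $C_2$.
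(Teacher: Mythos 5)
Your proof is correct and follows essentially the same route as the paper's: invoke Corollary~\ref{coro:degree_3_ady_off_path}, rule out the ``$v_2$ on the path'' alternative via Proposition~\ref{prop:nexus_not_on_path}, and identify the neighbor of $v_2$ on the $v_1$--$v_3$ path with $v_0$. The only divergence is in that last identification, where the paper tersely appeals to uniqueness of the nexus while you give an explicit two-paths-in-a-tree contradiction; your version is the more carefully justified of the two.
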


\begin{proof}
By the previous corollary, since $d(v_2) = 3$, there are two possibilities: either $v_2$ belongs to the path between $v_1$ and $v_3$, or it is adjacent to the path. But since there is also a link between $v_1$, $v_2$, and $v_3$, it cannot happen that all three vertices belong to the same path. Therefore, the first option cannot happen, that is, $v_2$ is adjacent to a vertex $w$ of the path. Since $w$ is a link and the link is unique, $w = v_0$.
\end{proof}

\begin{proposition}\label{prop:no_double_crossing}
Let $T = (V,E)$ be a tree with $\pthin(T) = 2$. Let $\sigma$ be an ordering and $S = \{V^0, V^1\}$ be a class partition of the vertices such that $\sigma$ and $S$ are strongly consistent. Let $v_1$, $v_2$, $v_3$, and $v_4$ be four vertices that form a path $v_1, v_2, v_3, v_4$ and that alternate in classes (e.g., $v_1, v_3 \in V^0$ and $v_2, v_4 \in V^1$). Then, neither $v_1 < v_3$ and $v_4 < v_2$ can happen simultaneously, nor can $v_3 < v_1$ and $v_2 < v_4$.
\end{proposition}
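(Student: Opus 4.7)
The plan is to assume for contradiction that $v_1 < v_3$ and $v_4 < v_2$ hold simultaneously, and derive the existence of an edge $v_1 v_4$ in $T$. Since $v_1, v_2, v_3, v_4$ form a path in the tree, $v_1$ and $v_4$ are at distance~$3$ and hence non-adjacent, giving the desired contradiction. The other forbidden configuration ($v_3 < v_1$ and $v_2 < v_4$) follows by the same argument applied to the reverse of $\sigma$, which is also strongly consistent with $S$ (this is precisely what ``strongly consistent'' means).

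Assuming $v_1 < v_3$ and $v_4 < v_2$, I would split on the relative position of $v_1$ and $v_4$. In the first case, $v_4 < v_1$: then $v_4 < v_1 < v_3$ is a triple in which $v_1, v_3$ lie in the same class $V^0$ and the edge $v_3 v_4$ is present. Strong consistency applied to this triple (the second rule in the definition of proper $k$-thin, with $r = v_4$, $s = v_1$, $t = v_3$) forces $v_1 v_4 \in E(T)$. In the second case, $v_1 < v_4$: combined with $v_4 < v_2$, we get the triple $v_1 < v_4 < v_2$, in which $v_2, v_4$ both lie in $V^1$ and the edge $v_1 v_2$ is present. Strong consistency again (with $r = v_1$, $s = v_4$, $t = v_2$) yields $v_1 v_4 \in E(T)$. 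Either way, we contradict the fact that $v_1 v_2 v_3 v_4$ is an induced path in the tree $T$.

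There is essentially no obstacle here: once one notices that a ``crossing'' of the path $v_1 v_2 v_3 v_4$ under $\sigma$ puts two same-class vertices on opposite sides of an edge, the consistency rule forces the chord $v_1 v_4$, which is ruled out by $T$ being a tree. The only mild care needed is to recognize that the second forbidden configuration is handled uniformly by reversing $\sigma$, so that the two statements of the proposition reduce to a single argument.
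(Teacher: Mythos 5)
Your proof is correct and follows essentially the same route as the paper: the same case split on the relative order of $v_1$ and $v_4$, the same application of the strong-consistency rule to the triples $v_4 < v_1 < v_3$ and $v_1 < v_4 < v_2$ to force the chord $v_1 v_4$, and the same contradiction with $T$ being a tree. Your explicit remark that the second forbidden configuration reduces to the first by reversing $\sigma$ is a clean way to state what the paper leaves implicit.
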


\begin{proof}
Suppose that $v_1 < v_3$ and $v_4 < v_2$ both hold. We want to see that a contradiction arises:
\begin{itemize}
\item If $v_1 < v_4$, then since $v_4 < v_2$ also holds, the inequality $v_1 < v_4 < v_2$ holds. And since $v_2$ and $v_4$ are in the same class and $v_1$ is adjacent to $v_2$, then $v_1$ must be adjacent to $v_4$.
\item If $v_4 < v_1$, then since $v_1 < v_3$ also holds, the inequality $v_4 < v_1 < v_3$ holds. And since $v_1$ and $v_3$ are in the same class, and $v_3$ is adjacent to $v_4$, then $v_1$ must be adjacent to $v_4$.
\end{itemize}
In both cases, we conclude that $v_1$ must be adjacent to $v_4$. But then a cycle is formed between the four vertices, which contradicts $T$ as a tree.
\end{proof}

\begin{proposition}\label{prop:pthin2_has_claw}
Let $T$ be a tree such that $\pthin(T) = 2$. Then there exists some vertex in $T$ with degree greater than or equal to~3.
\end{proposition}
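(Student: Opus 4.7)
The plan is to proceed by contraposition: I will show that if every vertex of $T$ has degree at most $2$, then $\pthin(T) \leq 1$, which contradicts the assumption $\pthin(T)=2$.

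First I would observe that a connected acyclic graph in which every vertex has degree at most $2$ is necessarily a simple path. This is standard: if $T$ has no vertex of degree $\geq 3$, then each vertex is either a leaf or an internal vertex of degree exactly $2$; since $T$ is a finite tree with at least one leaf, starting from such a leaf and following the unique neighbor at each step traces out a path that must exhaust $V(T)$, as any deviation would either create a cycle or a vertex of degree $\geq 3$.

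Next I would invoke the fact, already recorded in the subsection on proper interval trees and forests, that paths are precisely the proper interval trees, together with the characterization (Theorem~\ref{th:Roberts}) that proper interval graphs are exactly the proper $1$-thin graphs. Combining these two observations, if $T$ is a path then $\pthin(T) \leq 1$. This contradicts $\pthin(T)=2$, so $T$ must contain a vertex of degree at least $3$.

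The proof is essentially a one-line contrapositive argument; there is no substantive obstacle, as all the ingredients (the tree-to-path reduction under degree $\leq 2$, and the identification of proper $1$-thin with proper interval) are either elementary or already cited in the excerpt.
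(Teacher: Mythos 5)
Your argument is correct and follows essentially the same route as the paper: assume all degrees are at most~2, conclude $T$ is a simple path, and deduce $\pthin(T)=1$, a contradiction. The only cosmetic difference is that the paper justifies the last step by exhibiting the path ordering $v_1 < \dots < v_n$ as strongly consistent with the single-class partition, whereas you cite the equivalence of proper $1$-thin graphs with proper interval graphs; both are valid.
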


\begin{proof}
Let $T = (V,E)$ be a tree with $\pthin(T) = 2$. Suppose what we want to prove does not hold, that is, $d(v) \leq 2$ $\forall v \in V$. Then, since $T$ is a tree, $T$ is a simple path $v_1, \dots, v_n$. But then there exists an ordering $v_1 < \dots < v_n$ that is strongly consistent with $S' = \{V\}$, so $\pthin(T) = 1$.
\end{proof}

\begin{proposition}\label{prop:pthin2_degree_5}
Let $T$ be a tree such that $\pthin(T) = 2$. If $v$ is a vertex of $T$ with degree greater than or equal to~5, then at most~4 of its neighbors have degree greater than~1.
\end{proposition}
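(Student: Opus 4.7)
The plan is to use the two class-neighborhood restrictions established earlier, namely Proposition~\ref{prop:same_class_neighbors} and Proposition~\ref{prop:neighbors_other_class}, and show that they already force at most four non-leaf neighbors around any vertex of degree at least 5. Fix a strongly consistent ordering $\sigma$ and partition $S = \{V^0, V^1\}$ witnessing $\pthin(T) = 2$, and assume without loss of generality that $v \in V^0$.

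First I would count how many neighbors of $v$ lie in the same class as $v$. By Proposition~\ref{prop:same_class_neighbors}, $|N(v) \cap V^0| \leq 2$, so at most two neighbors of $v$ in $V^0$ can have degree greater than $1$. Next I would analyze the neighbors of $v$ in $V^1$. Since $d(v) \geq 5$, we have $|N(v) \cap V^1| \geq 3$; list them as $w_1 < w_2 < \cdots < w_k$ in $\sigma$, with $k \geq 3$. Applying Proposition~\ref{prop:neighbors_other_class} to the triple $w_1, w_j, w_k$ for every $j$ with $1 < j < k$, we conclude that each such $w_j$ is a leaf, i.e., $d(w_j) = 1$. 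Thus at most two neighbors of $v$ in $V^1$, namely $w_1$ and $w_k$, can have degree greater than $1$.

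Adding both bounds gives that at most $2 + 2 = 4$ neighbors of $v$ have degree greater than $1$, which is exactly the statement. No step here requires a new lemma; the main subtlety is merely noting that Proposition~\ref{prop:neighbors_other_class} applies to \emph{any} triple among the neighbors of $v$ in the opposite class, and hence kills the degree of every $w_j$ that is not extremal in the ordering of $N(v) \cap V^1$. I do not expect any genuine obstacle, so the proof will be short and essentially a direct bookkeeping consequence of the two earlier propositions.
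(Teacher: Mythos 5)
Your proof is correct and follows essentially the same route as the paper's: bound the same-class neighbors by two via Proposition~\ref{prop:same_class_neighbors}, then use Proposition~\ref{prop:neighbors_other_class} to conclude that only the first and last (in $\sigma$) of the opposite-class neighbors can be non-leaves. The only difference is that you spell out the application of Proposition~\ref{prop:neighbors_other_class} to each triple $w_1 < w_j < w_k$ explicitly, which the paper leaves implicit.
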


\begin{proof}
Let $v$ be a vertex of $T$ with $d(v) \geq 5$. By Proposition~\ref{prop:same_class_neighbors}, there are at most two vertices of $N(v)$ in the same class as $v$. Let $v_1 < \dots < v_l$ be the vertices neighboring $v$ in the other class. By Proposition~\ref{prop:neighbors_other_class}, of all such vertices, only $v_1$ and $v_l$ can have degree greater than~1.
\end{proof}

\begin{proposition}\label{prop:pthin2_has_path}
Let $T$ be a tree such that $\pthin(T) = 2$. Then, there exists a simple path $C_0$ in $T$ such that:
\begin{enumerate}
    \item All vertices of $T$ with degree greater than or equal to~4 are in $C_0$.
    \item If any vertex $v$ of degree~3 is not in $C_0$, then:
    \begin{enumerate}
        \item $v$ is adjacent to a vertex $w$ in $C_0$.
        \item The degree of $w$ is less than or equal to~3.
    \end{enumerate}
\end{enumerate}
\end{proposition}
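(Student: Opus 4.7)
The plan is to fix a strongly consistent ordering $\sigma$ together with its class partition $S = \{V^0, V^1\}$, and to define $C_0$ as the unique simple path from the $\sigma$-minimum vertex $a$ to the $\sigma$-maximum vertex $b$ of $T$. Property~1 follows directly: any $u$ with $d(u) \geq 4$ satisfies $a < u < b$ in $\sigma$, and Corollary~\ref{coro:degree_4_outside_the_path} then forces $u$ onto the unique $a$-to-$b$ path, which is $C_0$.

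For property~2, let $v \notin C_0$ be a vertex of degree $3$. Again $a < v < b$, so Corollary~\ref{coro:degree_3_ady_off_path} gives a neighbor $w$ of $v$ with $w \in C_0$, and Corollary~\ref{coro:degree_3_ady_no_nexus} ensures that such a $w$ is unique. This establishes condition~2(a) for the chosen $C_0$.

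The main task is condition~2(b), namely $d(w) \leq 3$. I would argue by contradiction: suppose $d(w) \geq 4$. Let $c_-, c_+$ be the two neighbors of $w$ on $C_0$, so that $\{v, c_-, c_+\}$ is a set of three distinct non-leaf neighbors of $w$. Propositions~\ref{prop:same_class_neighbors} and~\ref{prop:neighbors_other_class} tightly restrict how these three vertices distribute between $V^0$ and $V^1$ and how they sit in $\sigma$: at most two may share $w$'s class (one before and one after $w$ in $\sigma$), and any two sharing the opposite class of $w$ must both be $\sigma$-extreme among $w$'s opposite-class neighbors. I would then invoke Proposition~\ref{prop:does_not_cross_edge} on the edge $c_- w$ or $c_+ w$ to confine the subtree hanging off $v$ to one side (before or after) of that edge in $\sigma$, and combine this with Proposition~\ref{prop:no_double_crossing} applied to a length-$3$ alternating path starting at $v$, passing through $w$, and continuing along $C_0$. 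The intended outcome is either an edge forced by strong consistency that would close a cycle in $T$, or a nexus configuration ruled out by Corollary~\ref{coro:degree_4_no_nexus}.

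The principal obstacle is precisely this final case analysis: the sub-cases branch on the classes and $\sigma$-positions of $v$, $c_-$, and $c_+$, and on whether further degree-$\geq 4$ vertices of $T$ lie on the $a$-side or the $b$-side of $w$ along $C_0$. If a clean direct contradiction proves out of reach, the backup plan is to abandon the rigid choice of $C_0$ and instead build it iteratively: starting from the $a$-to-$b$ path, at each offending $w$ re-route $C_0$ through $v$ and extend into $v$'s subtree toward a leaf, using Proposition~\ref{prop:does_not_cross_edge} to verify that the re-routing keeps all degree-$\geq 4$ vertices on the new path. A monotone potential (for instance, the sum of degrees over vertices on the current path) would then show that the process terminates with a path satisfying all three conditions.
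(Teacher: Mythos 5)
Your choice of $C_0$ as the path between the $\sigma$-minimum $a$ and $\sigma$-maximum $b$ does give clean proofs of property~1 (via Corollary~\ref{coro:degree_4_outside_the_path} with $v_1=a$, $v_3=b$) and of property~2(a) (via Corollary~\ref{coro:degree_3_ady_off_path}); this is genuinely simpler than the paper's iterative construction of the path for those two properties. (Two small points: uniqueness of $w$ follows from acyclicity of $T$, not from Corollary~\ref{coro:degree_3_ady_no_nexus}; and $w$ may be an endpoint of $C_0$, in which case it has only one neighbor $c_-$ on the path, so the triple $\{v,c_-,c_+\}$ need not exist.)

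The gap is property~2(b), which is the core of the proposition, and your text does not prove it: it names the tools you would invoke and then states that ``the principal obstacle is precisely this final case analysis.'' The obstacle is real, and not merely one of bookkeeping. The paper's own proof of 2(b) derives a contradiction from $d(w)\geq 4$ \emph{only} in the case where both subpaths of $C_0$ on either side of $w$ contain a vertex of degree at least~3; when one side of $w$ carries no such vertex, no contradiction is available, and the correct move is to discard that subpath and re-route the path through $v$ into its subtree. Concretely, a tree consisting of a degree-4 vertex $w$ with a pendant leaf, a degree-3 neighbor $v$ carrying two leaves, and two bare pendant paths has proper thinness~2, yet the long path through $w$ avoiding $v$ satisfies properties~1 and~2(a) while violating~2(b); so a ``rigid'' path satisfying 1 and 2(a) need not satisfy 2(b). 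For your argument to go through you would have to prove that the min--max path never lands in this configuration, i.e., that whenever such a $w$ and $v$ exist, the extremum $a$ or $b$ is forced into the subtree hanging off $v$. That claim may well be true, but it requires its own case analysis on the classes and positions of $w$'s neighbors (of the same flavor as the paper's use of Propositions~\ref{prop:same_class_neighbors}, \ref{prop:neighbors_other_class}, \ref{prop:does_not_cross_edge}, and \ref{prop:no_double_crossing}), and nothing in the proposal supplies it. The backup plan is essentially the paper's strategy, but it too is only sketched: the proposed potential (sum of degrees on the path) is not shown to increase under re-routing, whereas the paper instead shows that the set of degree-3 vertices neither on nor adjacent to the current path strictly shrinks, and the verification that a re-routing preserving property~1 always exists is exactly the long nexus-based case analysis that the proposal leaves out.
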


\begin{proof}
Let $T = (V, E)$ be a tree with $\pthin(T) = 2$. That is, there exists an ordering $\sigma$ and a partition into classes of the vertices $S = \{V^0, V^1\}$ that are  strongly consistent. We want to prove that there exists a simple path $C_0$ satisfying 1. and 2.:
\begin{enumerate}
\item Suppose that 1. does not hold, that is, there exist 3 vertices $v_1$, $v_2$ and $v_3$ with $d(v_1) \geq 4$, $d(v_2) \geq 4$ and $d(v_3) \geq 4$ that cannot be on the same simple path. Suppose, w.l.o.g., that $v_1 < v_2 < v_3$. But by Corollary~\ref{coro:degree_4_outside_the_path} the only simple path from $v_1$ to $v_3$ passes through $v_2$, which contradicts the above. Therefore, there exists a path $C_1$ that contains all vertices in $T$ of degree greater than or equal to~4.
\item
\begin{enumerate}
\item By the above, if $\pthin(T) = 2$, let $C_1 = u_0, \dots, u_k$ be the path that exists through the previous point. Let $W_1 = \{ w_0, \dots, w_l \}$ be the set of vertices of degree~3 that are not in $C_1$ and that are not adjacent to $C_1$. We want to construct a path $C_2$ that also satisfies the condition of the previous point and such that the set $W_2$ of vertices of degree~3 that are not in $C_2$ and that are not adjacent to $C_2$ is contained in $\{ w_1, \dots, w_l \}$. Let $u_i$ be the vertex in $C_2$ such that the simple path from $u_r$ to $w_0$ does not pass through any other vertex in $C_2$. We see that in all cases, one can take $C_2 = u_0, \dots , u_i, \dots, w_0$ or $C_2 = w_0, \dots , u_i, \dots, u_k$. To do this, let us split the cases into subpaths $u_0, \dots, u_{i-1}$ and $u_{i+1}, \dots, u_k$ and see that in all cases one of these two subpaths can be discarded:
\begin{itemize}
\item If both subpaths contain a vertex of degree~4, let $v_1$ and $v_2$ be two vertices of degree~4, one in each subpath.
\begin{itemize}
\item If $v_1 < w_0 < v_2$ or $v_2 < w_0 < v_1$, the Corollary~\ref{coro:degree_3_ady_off_path} contradicts the fact that $w_0$ does not belong to the unique simple path between $v_1$ and $v_2$ nor is it adjacent to a vertex on that path. \item If $w_0 < v_1 < v_2$, $v_2 < v_1 < w_0$, $v_1 < v_2 < w_0$ or $w_0 < v_2 < v_1$, Corollary~\ref{coro:degree_4_no_nexus} contradicts the fact that $u_i$ is the nexus between $w_0$, $v_1$ and $v_2$.
\end{itemize}
\item If one of the two subpaths has a vertex $v_1$ of degree~4 and the other does not:
\begin{itemize}
\item If the other subpath has no vertices of degree~3 or adjacent vertices to any vertex of degree~3 outside the path, that subpath can be discarded.
\item If the other subpath has a single vertex of degree~3 that is adjacent to $u_i$ and there are no adjacent vertices to a vertex of degree~3 outside the path, that subpath can be discarded.
\item If neither of the above two options occurs, the other subpath has either a vertex $v_2$ of degree~3 that is not adjacent to $u_i$, or a vertex adjacent to some vertex $v_2$ of degree~3 outside the path. In both cases, we have that $u_i$ is the nexus between $w_0$, $v_1$, and $v_2$, and furthermore, $u_i$ is not adjacent to either $w_0$ or $v_2$. But then:
\begin{itemize}
\item If $v_2 < v_1 < w_0$ or $w_0 < v_1 < v_2$, Corollary~\ref{coro:degree_4_no_nexus} contradicts the fact that $u_i$ is the nexus between vertices $w_0$, $v_1$, and $v_2$.
\item If $v_1 < w_0 < v_2$, $v_2 < w_0 < v_1$, $w_0 < v_2 < v_1$, or $v_1 < v_2 < w_0$, Corollary~\ref{coro:degree_3_ady_no_nexus} contradicts the fact that $u_i$ is the nexus between vertices $w_0$, $v_1$, and $v_2$.
\end{itemize}
\end{itemize}
\item If there are no vertices of degree~4 in either subpath:
\begin{itemize}
\item If in either subpath there are no vertices of degree~3, nor vertices adjacent to any vertex of degree~3 outside the path, that subpath can be discarded.
\item If in either of the two subpaths there is a single vertex of degree~3 that is adjacent to $u_i$ and in addition that subpath has no vertices adjacent to any vertex of degree~3 outside the path, that subpath can be discarded.
\item If neither of the two options above happens, in both subpaths there is either a vertex $v_j$ of degree~3 that is not adjacent to $u_i$, or a vertex adjacent to some vertex $v_j$ of degree~3 outside the path. In all cases, we have that $u_i$ is the nexus between $w_0$, $v_1$ and $v_2$ (where $v_1$ and $v_2$ are the $v_j$ that exist for each of the two subpaths) and in addition $u_i$ is not adjacent to any of these 3 vertices. But the Corollary~\ref{coro:degree_3_ady_no_nexus} contradicts the fact that $u_i$ is the nexus between these 3 vertices.
\end{itemize}
\end{itemize}
Thus, we conclude that in all possible cases the path $C_2$ can be constructed. We can continue repeating this process by taking paths $C_3, C_4, \dots$ such that all satisfy the condition in point~1, and for all $i$ it holds that $W_i$ is strictly included in $W_{i-1}$. Eventually, we obtain a path $C_j$ such that $W_j = \emptyset$. Therefore, the path $C_0' = C_j$ satisfies 1. and~2.(a).
\item By the above, if $\pthin(T) = 2$, let $C_0' = v_0, \dots, v_k$ be the path that exists by 1) and 2)a). Let $v_3 \notin C_0'$ with $d(v_3) = 3$, and let $w \in C_0'$ be adjacent to $v_3$. We want to prove that $d(w) = 3$:
\begin{itemize}
\item By a reasoning analogous to the previous point, if any of the subpaths of $w$ in $C_0'$ does not have any vertices of degree $\geq 3$, a new path $C_0''$ could be taken, discarding that subpath.
\item If, on the other hand, both subpaths of $w$ in $C_0'$ have some vertex of degree $\geq 3$, a new path cannot be taken. Therefore, to complete the proof, it is enough to prove that, if in this situation, $d(w) \geq 4$ holds, we arrive at an absurdity: Let $v_1$ and $v_2$ be two vertices of degree greater than or equal to~3 in $C_0'$, one in each subpath. Let $v_i$ (with $i \in \{1, 2, 3\}$) be the intermediate vertex in the order among the~3. We know that $d(v_i) = 3$, since if $d(v_i) \geq 4$, the Corollary~\ref{coro:degree_4_no_nexus} contradicts the fact that $w$ is the nexus between the 3 vertices. Furthermore, by the Corollary~\ref{coro:degree_3_ady_no_nexus}, we know that $v_i$ is adjacent to $w$. \begin{itemize}
\item If $v_i$ and $w$ are in the same class of the partition (for example, $v_i \in V^0$ and $w \in V^0$), and assuming that $v_i < w$ holds, then there is a $v_j < v_i < w$ (if instead, $v_i > w$, we take $v_j > v_i > w$). Since $d(v_i) = 3$, there exists a vertex $x \in V^1$ adjacent to $v_i$. Therefore $v_j$ is smaller than the edge $v_i x$ and $w$ is larger, but then by Proposition~\ref{prop:does_not_cross_edge} there cannot exist a simple path from $v_j$ to $w$ that does not pass through $v_i$ or $x$.
\item If $v_i$ and $w$ are in different classes of the partition (for example, $v_i \in V^1$ and $w \in V^0$), then since $d(w) \geq 4$, there exists another vertex $x$ adjacent to $w$ in $V^1$.
\begin{itemize}
\item If $v_i < x$, then the other two adjacent vertices to $v_i$ ($y$ and $z$) are less than $v_i$. At least one of these two, for example $y$, must be in $V^0$. By Proposition~\ref{prop:no_double_crossing}, it holds that $y < w$. But then $v_j$ that is less than $v_i$, must be less than $y$ (since otherwise, it would be adjacent to $y$ or $v_i$ depending on which class it is in). Then, by Proposition~\ref{prop:does_not_cross_edge}, there cannot exist a simple path from $w$ to $v_j$ that does not pass through $v_i$ or $y$.
\item If $x < v_i$, by a reasoning symmetric to the previous one, by the Proposition~\ref{prop:does_not_cross_edge}, there cannot exist a simple path from $w$ to $v_k$ (where $v_i < v_k$) that does not pass through $v_i$ or a neighbor of $v_i$.
\end{itemize}
\end{itemize}
\end{itemize}
\end{enumerate}
\end{enumerate}
\end{proof}

\subsubsection{Proof of the structural characterization theorem}

Using the previous results, we can prove the main result:

\begin{proof}[Proof of Theorem~\ref{theorem:characterization_pthin2}]
\textbf{$\Rightarrow$)} It follows from Proposition~\ref{prop:pthin2_has_claw}, Proposition~\ref{prop:pthin2_degree_5}, and Proposition~\ref{prop:pthin2_has_path}.

\textbf{$\Leftarrow$)} Let $T = (V, E)$ be a tree satisfying~1., 2., and~3.. We want to prove that $\pthin(T) = 2$. Let $C_0$ be the simple path that exists by~3.. We can take $C_0' = v_1, \dots, v_k$ to be a simple path that arises from $C_0$ by extending it to two leaves $v_1$ and $v_k$, taking it such that there is no other path $C_0''$ that also extends $C_0$ and that satisfies that the number of vertices in $C_0''$ is greater than that of $C_0'$. We want to prove that, if we take $V^0 = \{v_1, \dots, v_k \}$ and $V^1 = V \setminus V^0$, there exists an ordering $\sigma$ strongly consistent with the class partition $S = \{ V^0, V^1 \}$: We start with $\sigma _0 = v_1 < \dots < v_k$ and we want to add the vertices of $V^1$ to the ordering. Since $C_0'$ is a simple path, there are no three vertices in $V^0$ that do not satisfy the proper thinness condition, so the ordering $\sigma _0$ is strongly consistent with the vertices in $V^0$. We traverse in order $v_1, \dots , v_k$ adding to the ordering the vertices of $V^1$ adjacent to each $v_i$. Since $v_1$ and $v_k$ are leaves, there is nothing to add in the first and last steps. Then, for $i$ from $2$ to $k - 1$:
\begin{itemize}
\item If $d(v_i) = 2$, both vertices adjacent to $v_i$ are in $V^0$. Then, there is nothing to add, that is, $\sigma _i = \sigma _{i-1}$.
\item If $d(v_i) = 3$, there is exactly one vertex $w_0 \in V^1$ adjacent to $v_i$. There are two possibilities:
\begin{itemize}
\item If $d(w_0) = 3$, then $w_0$ has two branches $w_1, \dots, w_l$ and $u_1, \dots, u_m$ of vertices in $V^1$. We take $\sigma _i$ by adding just before and just after $v_i$: $u_m < \dots < u_1 < v_i < w_0 < w_1 < \dots < w_l$.
\item Otherwise, starting from $w_0$ there is a simple branch $w_0, \dots, w_l$. We take $\sigma _i$ by adding just after $v_i$: $v_i < w_0 < \dots < w_l$.
\end{itemize}
\item If $d(v_i) = 4$, there exist exactly two vertices $u_0$ and $w_0$ in $V^1$ adjacent to $v_i$, which have simple branches $u_0, \dots, u_m$ and $w_0, \dots, w_l$. We take $\sigma _i$ by adding just before and just after $v_i$: $u_m < \dots < u_0 < v_i < w_0 < \dots < w_l$.
\item If $d(v_i) = s$ with $s \geq 5$, then there are at most two vertices $u_0$ and $w_0$ in $V^1$ adjacent to $v_i$ that have branches to more than one vertex $u_0, \dots, u_m$ and $w_0, \dots, w_l$. Furthermore, there exist $x_0, \dots , x_{s - 4}$ leaves adjacent to $v_i$ that are in $V^1$. We take $\sigma _i$ by adding just before and just after $v_i$: $u_m < \dots < u_0 < v_i < x_0 < \dots < x_{s - 4} < w_0 < \dots < w_l$.
\end{itemize}
\end{proof}

Using the path $C_0$ from the previous proof, we can prove both corollaries of the theorem.

\subsection{Proof of the characterization by minimal forbidden induced subgraphs theorem}

To complete the characterization, we need to prove that the graphs are minimal. To do so, we first prove the following auxiliary proposition:

\begin{proposition}\label{prop:pthin_proper_subgraphs}
Let $T$ be a tree, and let $G$ and $H$ be two trees such that $H$ is a proper subgraph of $T$ and $G$ is a proper subgraph of $H$. Then, $\pthin(G) \leq \pthin(H)$.
\end{proposition}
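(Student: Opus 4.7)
The plan is to reduce the statement to the basic monotonicity of proper thinness under induced subgraphs, which is property~(1) recalled at the beginning of this appendix. The key observation is that whenever $G$ and $H$ are both trees and $G$ is a subgraph of $H$, the tree structure forces $G$ to coincide with the induced subgraph $H[V(G)]$, so the apparent gain of treating non-induced subgraphs disappears.

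First I would verify that $G = H[V(G)]$. Since $H$ is a tree, the induced subgraph $H[V(G)]$ is a subgraph of a tree and hence a forest, so it has at most $|V(G)|-1$ edges. On the other hand $G$ itself is a tree on $|V(G)|$ vertices and therefore has exactly $|V(G)|-1$ edges, and every edge of $G$ lies in $H[V(G)]$ because $E(G) \subseteq E(H)$ and both endpoints of each such edge belong to $V(G)$. The two edge-counts squeeze, forcing $E(G) = E(H[V(G)])$ and hence $G = H[V(G)]$.

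The conclusion then follows at once: by the induced-subgraph monotonicity of proper thinness, $\pthin(G) = \pthin(H[V(G)]) \leq \pthin(H)$. The ambient tree $T$ plays no role in the argument; it appears in the hypothesis only because this proposition will later be applied inside the trees of Figure~\ref{fig:prohibidos_pt2} when establishing their minimality, where one naturally deals with a nested chain of trees.

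I do not foresee any real obstacle. The only delicate point is the edge-counting step, which is precisely where the hypothesis that $G$ itself is a tree (and not merely an arbitrary subgraph of $H$) is essential; without it, one could consider a subgraph $G$ that is disconnected or has strictly fewer edges than $H[V(G)]$, and the reduction to the induced-subgraph case would no longer be immediate.
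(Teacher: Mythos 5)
Your proof is correct, and it matches the route the paper implicitly relies on: the paper states no explicit proof of this proposition, treating it as a consequence of the induced-subgraph monotonicity of $\pthin$ recalled as property~(1) at the start of the appendix, which is exactly the reduction you carry out. Your edge-counting verification that a subtree $G$ of a tree $H$ necessarily equals $H[V(G)]$ is the one detail worth making explicit, and you have done so correctly.
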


Now, we prove that they are minimal:

\begin{proof}
To prove that the graphs in the figure are minimal, it suffices to see that for all such graphs, all their connected proper subgraphs (i.e., that they are trees) have proper thinness equal to~2. To show that these subtrees have proper thinness equal to~2, we show a 2-thin proper representation of each one (the vertex order is left to right and the classes are horizontal).

Furthermore, by Proposition~\ref{prop:pthin_proper_subgraphs}, it suffices to prove that, if $T$ is any of the trees in the figure, $\pthin(G) = 2$ for every $G$ connected proper subgraph of $T$ such that $G$ is not itself a proper subgraph of $H$ ($H$ also being a proper subgraph of $T$).
\begin{itemize}
\item $T_0$: The only proper subgraph that is not a proper subgraph of any other graph is:

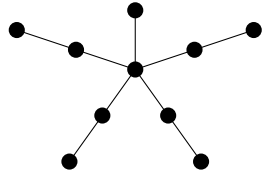
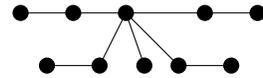
\begin{figure}[H]
     \centering
     \begin{subfigure}[b]{0.4\textwidth}
         \centering
         \begin{tikzpicture}[scale=0.35]
        	\begin{pgfonlayer}{nodelayer}
        		\node [circle,fill=black,scale=0.55] (0) at (0, 0) {};
        		\node [circle,fill=black,scale=0.55] (1) at (-1.25, -1.75) {};
        		\node [circle,fill=black,scale=0.55] (2) at (1.25, -1.75) {};
        		\node [circle,fill=black,scale=0.55] (3) at (2.25, 0.75) {};
        		\node [circle,fill=black,scale=0.55] (4) at (-2.25, 0.75) {};
        		\node [circle,fill=black,scale=0.55] (5) at (0, 2.25) {};
        		\node [circle,fill=black,scale=0.55] (6) at (-4.5, 1.5) {};
        		\node [circle,fill=black,scale=0.55] (8) at (-2.5, -3.5) {};
        		\node [circle,fill=black,scale=0.55] (9) at (2.5, -3.5) {};
        		\node [circle,fill=black,scale=0.55] (10) at (4.5, 1.5) {};
        	\end{pgfonlayer}
        	\begin{pgfonlayer}{edgelayer}
        		\draw (0.center) to (5.center);
        		\draw (0.center) to (4.center);
        		\draw (4.center) to (6.center);
        		\draw (0.center) to (1.center);
        		\draw (1.center) to (8.center);
        		\draw (0.center) to (2.center);
        		\draw (2.center) to (9.center);
        		\draw (0.center) to (3.center);
        		\draw (3.center) to (10.center);
        	\end{pgfonlayer}
        \end{tikzpicture}
         \caption{Proper subgraph of $T_0$}
     \end{subfigure}
     \hfill
     \begin{subfigure}[b]{0.4\textwidth}
         \centering
         \begin{tikzpicture}[scale=0.35,rotate=270]
        	\begin{pgfonlayer}{nodelayer}
        		\node [circle,fill=black,scale=0.55] (0) at (0, 0) {};
        		\node [circle,fill=black,scale=0.55] (1) at (0, -2) {};
        		\node [circle,fill=black,scale=0.55] (2) at (2, -1) {};
        		\node [circle,fill=black,scale=0.55] (3) at (2, 2) {};
        		\node [circle,fill=black,scale=0.55] (4) at (2, 0.7) {};
        		\node [circle,fill=black,scale=0.55] (5) at (0, 3) {};
        		\node [circle,fill=black,scale=0.55] (7) at (0, 5) {};
        		\node [circle,fill=black,scale=0.55] (8) at (0, -4) {};
        		\node [circle,fill=black,scale=0.55] (9) at (2, -3) {};
        		\node [circle,fill=black,scale=0.55] (10) at (2, 4) {};
        	\end{pgfonlayer}
        	\begin{pgfonlayer}{edgelayer}
        		\draw (0.center) to (5.center);
        		\draw (5.center) to (7.center);
        		\draw (0.center) to (4.center);
        		\draw (0.center) to (1.center);
        		\draw (1.center) to (8.center);
        		\draw (0.center) to (2.center);
        		\draw (2.center) to (9.center);
        		\draw (0.center) to (3.center);
        		\draw (3.center) to (10.center);
        	\end{pgfonlayer}
        \end{tikzpicture}
        
         \caption{Proper 2-thin representation}
     \end{subfigure}

        \caption{Proper subgraph of $T_0$}
        \label{fig:T_0}
\end{figure}

\item $T_1$: The only proper subgraph that is not a proper subgraph of any other graph is:

\begin{figure}[H]
     \centering
     \begin{subfigure}[b]{0.4\textwidth}
         \centering
         \begin{tikzpicture}[scale=0.35]
        	\begin{pgfonlayer}{nodelayer}
        		\node [circle,fill=black,scale=0.55] (0) at (0, 0) {};
        		\node [circle,fill=black,scale=0.55] (1) at (2, 0) {};
        		\node [circle,fill=black,scale=0.55] (2) at (0, -2) {};
        		\node [circle,fill=black,scale=0.55] (3) at (-2, 0) {};
        		\node [circle,fill=black,scale=0.55] (4) at (-4, 0) {};
        		\node [circle,fill=black,scale=0.55] (5) at (-3.5, -1.5) {};
        		\node [circle,fill=black,scale=0.55] (6) at (-3.5, 1.5) {};
        		\node [circle,fill=black,scale=0.55] (7) at (4, 0) {};
        		\node [circle,fill=black,scale=0.55] (8) at (3.5, 1.5) {};
        		\node [circle,fill=black,scale=0.55] (9) at (3.5, -1.5) {};
        		\node [circle,fill=black,scale=0.55] (11) at (1.5, -3.5) {};
        		\node [circle,fill=black,scale=0.55] (12) at (-1.5, -3.5) {};
        	\end{pgfonlayer}
        	\begin{pgfonlayer}{edgelayer}
        		\draw (0.center) to (3.center);
        		\draw (3.center) to (4.center);
        		\draw (6.center) to (3.center);
        		\draw (3.center) to (5.center);
        		\draw (0.center) to (2.center);
        		\draw (2.center) to (12.center);
        		\draw (2.center) to (11.center);
        		\draw (0.center) to (1.center);
        		\draw (1.center) to (8.center);
        		\draw (1.center) to (7.center);
        		\draw (1.center) to (9.center);
        	\end{pgfonlayer}
        \end{tikzpicture}
         \caption{Proper subgraph of $T_1$}
     \end{subfigure}
     \hfill
     \begin{subfigure}[b]{0.4\textwidth}
         \centering
         \begin{tikzpicture}[scale=0.35,rotate=270]
        	\begin{pgfonlayer}{nodelayer}
        		\node [circle,fill=black,scale=0.55] (0) at (0, 0) {};
        		\node [circle,fill=black,scale=0.55] (1) at (0, -3) {};
        		\node [circle,fill=black,scale=0.55] (2) at (2, 0.7) {};
        		\node [circle,fill=black,scale=0.55] (3) at (0, 4) {};
        		\node [circle,fill=black,scale=0.55] (4) at (2, 5) {};
        		\node [circle,fill=black,scale=0.55] (5) at (0, 6) {};
        		\node [circle,fill=black,scale=0.55] (6) at (2, 3) {};
        		\node [circle,fill=black,scale=0.55] (7) at (2, -4) {};
        		\node [circle,fill=black,scale=0.55] (8) at (0, -5) {};
        		\node [circle,fill=black,scale=0.55] (9) at (2, -2) {};
        		\node [circle,fill=black,scale=0.55] (11) at (2, -1) {};
        		\node [circle,fill=black,scale=0.55] (12) at (2, 2) {};
        	\end{pgfonlayer}
        	\begin{pgfonlayer}{edgelayer}
        		\draw (0.center) to (3.center);
        		\draw (3.center) to (4.center);
        		\draw (6.center) to (3.center);
        		\draw (3.center) to (5.center);
        		\draw (0.center) to (2.center);
        		\draw (2.center) to (12.center);
        		\draw (2.center) to (11.center);
        		\draw (0.center) to (1.center);
        		\draw (1.center) to (8.center);
        		\draw (1.center) to (7.center);
        		\draw (1.center) to (9.center);
        	\end{pgfonlayer}
        \end{tikzpicture}
        
         \caption{Proper 2-thin representation}
     \end{subfigure}

        \caption{Proper subgraph of $T_1$}
        \label{fig:T_1}
\end{figure}
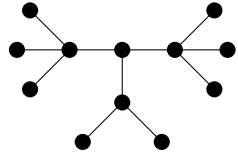
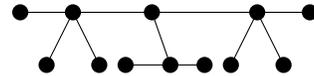

\item $T_2^{i,j,k}$: The only proper subgraph that is not a proper subgraph of any other graph is:

\begin{figure}[H]
     \centering
     \begin{subfigure}[b]{0.4\textwidth}
         \centering
         \begin{tikzpicture}[scale=0.35]
        	\begin{pgfonlayer}{nodelayer}
        		\node [circle,fill=black,scale=0.55] (0) at (0, 0) {};
        		\node [circle,fill=black,scale=0.55] (1) at (-2, 0) {};
        		\node [circle,fill=black,scale=0.55] (2) at (0, -2) {};
        		\node [circle,fill=black,scale=0.55] (3) at (2, 0) {};
        		\node [circle,fill=black,scale=0.55] (4) at (4, 0) {};
        		\node [circle,fill=black,scale=0.55] (6) at (-4, 0) {};
        		\node [circle,fill=black,scale=0.55] (7) at (-5.5, 1.5) {};
        		\node [circle,fill=black,scale=0.55] (8) at (-5.5, -1.5) {};
        		\node [circle,fill=black,scale=0.55] (9) at (5.5, 1.5) {};
        		\node [circle,fill=black,scale=0.55] (10) at (5.5, -1.5) {};
        		\node [circle,fill=black,scale=0.55] (11) at (0, -4) {};
        		\node [circle,fill=black,scale=0.55] (12) at (-1.5, -5.5) {};
        	\end{pgfonlayer}
        	\begin{pgfonlayer}{edgelayer}
        		\draw (0.center) to (1.center);
        		\draw [dashed] (1.center) to (6.center);
        		\draw (6.center) to (7.center);
        		\draw (6.center) to (8.center);
        		\draw (0.center) to (2.center);
        		\draw [dashed] (2.center) to (11.center);
        		\draw (11.center) to (12.center);
        		\draw (0.center) to (3.center);
        		\draw [dashed] (3.center) to (4.center);
        		\draw (4.center) to (9.center);
        		\draw (4.center) to (10.center);
        	\end{pgfonlayer}
        \end{tikzpicture}
         \caption{Proper subgraph of $T_2^{i,j,k}$}
     \end{subfigure}
     \hfill
     \begin{subfigure}[b]{0.4\textwidth}
         \centering
        \begin{tikzpicture}[scale=0.35,rotate=270]
        	\begin{pgfonlayer}{nodelayer}
        		\node [circle,fill=black,scale=0.55] (0) at (0, 0) {};
        		\node [circle,fill=black,scale=0.55] (1) at (0, 2) {};
        		\node [circle,fill=black,scale=0.55] (2) at (0, -6) {};
        		\node [circle,fill=black,scale=0.55] (3) at (2, -1) {};
        		\node [circle,fill=black,scale=0.55] (4) at (2, -3) {};
        		\node [circle,fill=black,scale=0.55] (6) at (0, 4) {};
        		\node [circle,fill=black,scale=0.55] (7) at (0, 6) {};
        		\node [circle,fill=black,scale=0.55] (8) at (2, 5) {};
        		\node [circle,fill=black,scale=0.55] (10) at (2, -5) {};
        		\node [circle,fill=black,scale=0.55] (11) at (0, -8) {};
        		\node [circle,fill=black,scale=0.55] (12) at (0, -10) {};
        		\node [circle,fill=black,scale=0.55] (13) at (2, -9) {};
        	\end{pgfonlayer}
        	\begin{pgfonlayer}{edgelayer}
        		\draw (0.center) to (1.center);
        		\draw [dashed] (1.center) to (6.center);
        		\draw (6.center) to (7.center);
        		\draw (6.center) to (8.center);
        		\draw (0.center) to (2.center);
        		\draw [dashed] (2.center) to (11.center);
        		\draw (11.center) to (12.center);
        		\draw (11.center) to (13.center);
        		\draw (0.center) to (3.center);
        		\draw [dashed] (3.center) to (4.center);
        		\draw (4.center) to (10.center);
        	\end{pgfonlayer}
        \end{tikzpicture}

         \caption{Proper 2-thin representation}
     \end{subfigure}

        \caption{Proper subgraph of $T_2^{i,j,k}$}
        \label{fig:T_2}
\end{figure}
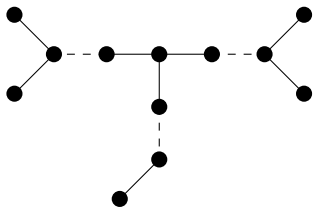
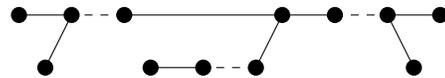

\item $T_3^{i,j}$: The only proper subgraphs that are not a proper subgraph of any other graph are:

\begin{figure}[H]
     \centering
     \begin{subfigure}[b]{0.4\textwidth}
         \centering
         \begin{tikzpicture}[scale=0.35]
        	\begin{pgfonlayer}{nodelayer}
        		\node [circle,fill=black,scale=0.55] (0) at (0, 0) {};
        		\node [circle,fill=black,scale=0.55] (1) at (0, -4) {};
        		\node [circle,fill=black,scale=0.55] (3) at (-1.5, -5.5) {};
        		\node [circle,fill=black,scale=0.55] (4) at (1.5, -5.5) {};
        		\node [circle,fill=black,scale=0.55] (5) at (2, 0) {};
        		\node [circle,fill=black,scale=0.55] (6) at (-2, 0) {};
        		\node [circle,fill=black,scale=0.55] (7) at (4, 0) {};
        		\node [circle,fill=black,scale=0.55] (8) at (5.5, 1.5) {};
        		\node [circle,fill=black,scale=0.55] (9) at (5.5, -1.5) {};
        		\node [circle,fill=black,scale=0.55] (10) at (-4, 0) {};
        		\node [circle,fill=black,scale=0.55] (11) at (-5.5, 1.5) {};
        		\node [circle,fill=black,scale=0.55] (12) at (-5.5, -1.5) {};
        	\end{pgfonlayer}
        	\begin{pgfonlayer}{edgelayer}
        		\draw (0.center) to (6.center);
        		\draw [dashed] (6.center) to (10.center);
        		\draw (10.center) to (11.center);
        		\draw (10.center) to (12.center);
        		\draw (0.center) to (1.center);
        		\draw (1.center) to (3.center);
        		\draw (1.center) to (4.center);
        		\draw (0.center) to (5.center);
        		\draw [dashed] (5.center) to (7.center);
        		\draw (7.center) to (8.center);
        		\draw (7.center) to (9.center);
        	\end{pgfonlayer}
        \end{tikzpicture}
         \caption{Proper subgraph of $T_3^{i,j}$}
     \end{subfigure}
     \hfill
     \begin{subfigure}[b]{0.4\textwidth}
         \centering
        \begin{tikzpicture}[scale=0.35,rotate=270]
        	\begin{pgfonlayer}{nodelayer}
        		\node [circle,fill=black,scale=0.55] (0) at (0, 0) {};
        		\node [circle,fill=black,scale=0.55] (1) at (2, -1) {};
        		\node [circle,fill=black,scale=0.55] (3) at (2, -3) {};
        		\node [circle,fill=black,scale=0.55] (4) at (2, 1) {};
        		\node [circle,fill=black,scale=0.55] (5) at (0, 2) {};
        		\node [circle,fill=black,scale=0.55] (6) at (0, -4) {};
        		\node [circle,fill=black,scale=0.55] (7) at (0, 4) {};
        		\node [circle,fill=black,scale=0.55] (8) at (0, 6) {};
        		\node [circle,fill=black,scale=0.55] (9) at (2, 5) {};
        		\node [circle,fill=black,scale=0.55] (10) at (0, -6) {};
        		\node [circle,fill=black,scale=0.55] (11) at (0, -8) {};
        		\node [circle,fill=black,scale=0.55] (12) at (2, -7) {};
        	\end{pgfonlayer}
        	\begin{pgfonlayer}{edgelayer}
        		\draw (0.center) to (6.center);
        		\draw [dashed] (6.center) to (10.center);
        		\draw (10.center) to (11.center);
        		\draw (10.center) to (12.center);
        		\draw (0.center) to (1.center);
        		\draw (1.center) to (3.center);
        		\draw (1.center) to (4.center);
        		\draw (0.center) to (5.center);
        		\draw [dashed] (5.center) to (7.center);
        		\draw (7.center) to (8.center);
        		\draw (7.center) to (9.center);
        	\end{pgfonlayer}
        \end{tikzpicture}

         \caption{Proper 2-thin representation}
     \end{subfigure}
     
     \centering
     \begin{subfigure}[b]{0.4\textwidth}
         \centering
         \begin{tikzpicture}[scale=0.35]
        	\begin{pgfonlayer}{nodelayer}
        		\node [circle,fill=black,scale=0.55] (0) at (0, 0) {};
        		\node [circle,fill=black,scale=0.55] (1) at (0, -4) {};
        		\node [circle,fill=black,scale=0.55] (2) at (0, -6) {};
        		\node [circle,fill=black,scale=0.55] (3) at (-1.5, -5.5) {};
        		\node [circle,fill=black,scale=0.55] (4) at (1.5, -5.5) {};
        		\node [circle,fill=black,scale=0.55] (5) at (2, 0) {};
        		\node [circle,fill=black,scale=0.55] (6) at (-2, 0) {};
        		\node [circle,fill=black,scale=0.55] (7) at (4, 0) {};
        		\node [circle,fill=black,scale=0.55] (8) at (5.5, 1.5) {};
        		\node [circle,fill=black,scale=0.55] (10) at (-4, 0) {};
        		\node [circle,fill=black,scale=0.55] (11) at (-5.5, 1.5) {};
        		\node [circle,fill=black,scale=0.55] (12) at (-5.5, -1.5) {};
        	\end{pgfonlayer}
        	\begin{pgfonlayer}{edgelayer}
        		\draw (0.center) to (6.center);
        		\draw [dashed] (6.center) to (10.center);
        		\draw (10.center) to (11.center);
        		\draw (10.center) to (12.center);
        		\draw (0.center) to (1.center);
        		\draw (1.center) to (3.center);
        		\draw (1.center) to (2.center);
        		\draw (1.center) to (4.center);
        		\draw (0.center) to (5.center);
        		\draw [dashed] (5.center) to (7.center);
        		\draw (7.center) to (8.center);
        	\end{pgfonlayer}
        \end{tikzpicture}
         \caption{Proper subgraph of $T_3^{i,j}$}
     \end{subfigure}
     \hfill
     \begin{subfigure}[b]{0.4\textwidth}
         \centering
        \begin{tikzpicture}[scale=0.35,rotate=270]
        	\begin{pgfonlayer}{nodelayer}
        		\node [circle,fill=black,scale=0.55] (0) at (0, 0) {};
        		\node [circle,fill=black,scale=0.55] (1) at (0, -2) {};
        		\node [circle,fill=black,scale=0.55] (2) at (0, -4) {};
        		\node [circle,fill=black,scale=0.55] (3) at (2, -1) {};
        		\node [circle,fill=black,scale=0.55] (4) at (2, -3) {};
        		\node [circle,fill=black,scale=0.55] (5) at (2, 1) {};
        		\node [circle,fill=black,scale=0.55] (6) at (0, 6) {};
        		\node [circle,fill=black,scale=0.55] (7) at (2, 3) {};
        		\node [circle,fill=black,scale=0.55] (8) at (2, 5) {};
        		\node [circle,fill=black,scale=0.55] (10) at (0, 8) {};
        		\node [circle,fill=black,scale=0.55] (11) at (0, 10) {};
        		\node [circle,fill=black,scale=0.55] (12) at (2, 9) {};
        	\end{pgfonlayer}
        	\begin{pgfonlayer}{edgelayer}
        		\draw (0.center) to (6.center);
        		\draw [dashed] (6.center) to (10.center);
        		\draw (10.center) to (11.center);
        		\draw (10.center) to (12.center);
        		\draw (0.center) to (1.center);
        		\draw (1.center) to (3.center);
        		\draw (1.center) to (2.center);
        		\draw (1.center) to (4.center);
        		\draw (0.center) to (5.center);
        		\draw [dashed] (5.center) to (7.center);
        		\draw (7.center) to (8.center);
        	\end{pgfonlayer}
        \end{tikzpicture}

         \caption{Proper 2-thin representation}
     \end{subfigure}

        \caption{Proper subgraphs of $T_3^{i,j}$}
        \label{fig:T_3}
\end{figure}
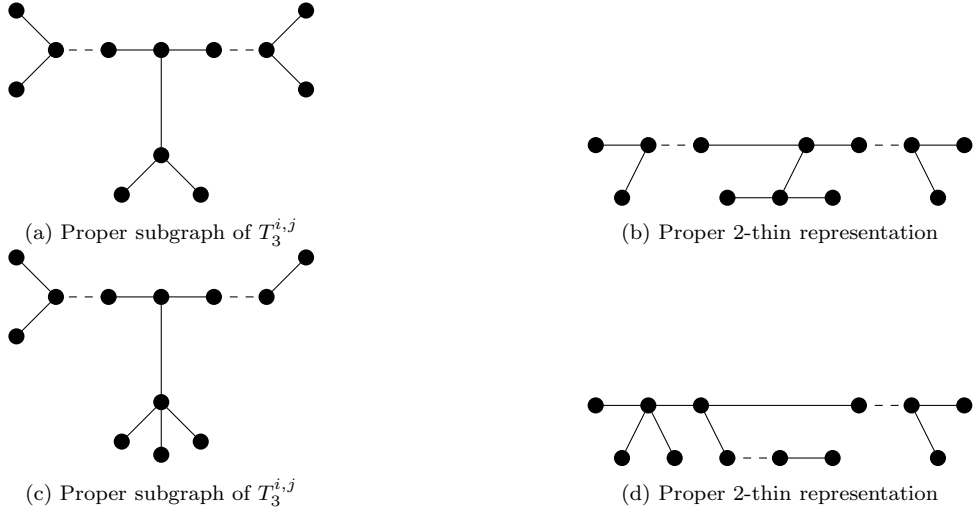

\item $T_4^i$: The only proper subgraphs that are not a proper subgraph of any other graph are:

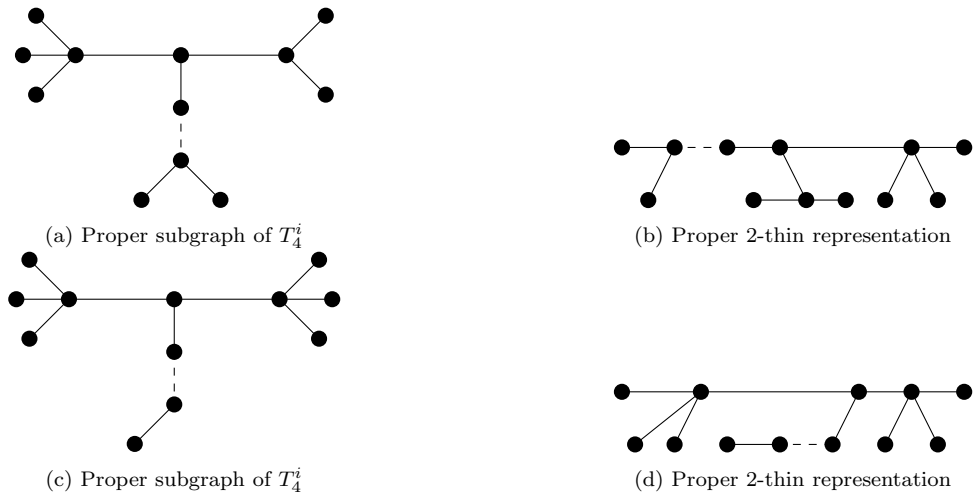
\begin{figure}[H]
     \centering
     \begin{subfigure}[b]{0.4\textwidth}
         \centering
         \begin{tikzpicture}[scale=0.35]
        	\begin{pgfonlayer}{nodelayer}
        		\node [circle,fill=black,scale=0.55] (0) at (0, 0) {};
        		\node [circle,fill=black,scale=0.55] (1) at (4, 0) {};
        		\node [circle,fill=black,scale=0.55] (3) at (5.5, 1.5) {};
        		\node [circle,fill=black,scale=0.55] (4) at (5.5, -1.5) {};
        		\node [circle,fill=black,scale=0.55] (5) at (-4, 0) {};
        		\node [circle,fill=black,scale=0.55] (6) at (-6, 0) {};
        		\node [circle,fill=black,scale=0.55] (7) at (-5.5, 1.5) {};
        		\node [circle,fill=black,scale=0.55] (8) at (-5.5, -1.5) {};
        		\node [circle,fill=black,scale=0.55] (9) at (0, -2) {};
        		\node [circle,fill=black,scale=0.55] (10) at (0, -4) {};
        		\node [circle,fill=black,scale=0.55] (11) at (-1.5, -5.5) {};
        		\node [circle,fill=black,scale=0.55] (12) at (1.5, -5.5) {};
        	\end{pgfonlayer}
        	\begin{pgfonlayer}{edgelayer}
        		\draw (0.center) to (5.center);
        		\draw (5.center) to (7.center);
        		\draw (6.center) to (5.center);
        		\draw (5.center) to (8.center);
        		\draw (9.center) to (0.center);
        		\draw [dashed] (9.center) to (10.center);
        		\draw (10.center) to (11.center);
        		\draw (10.center) to (12.center);
        		\draw (0.center) to (1.center);
        		\draw (1.center) to (3.center);
        		\draw (1.center) to (4.center);
        	\end{pgfonlayer}
        \end{tikzpicture}
         \caption{Proper subgraph of $T_4^{i}$}
     \end{subfigure}
     \hfill
     \begin{subfigure}[b]{0.4\textwidth}
         \centering
        \begin{tikzpicture}[scale=0.35,rotate=270]
        	\begin{pgfonlayer}{nodelayer}
        		\node [circle,fill=black,scale=0.55] (0) at (0, 0) {};
        		\node [circle,fill=black,scale=0.55] (1) at (2, 1) {};
        		\node [circle,fill=black,scale=0.55] (3) at (2, 2.5) {};
        		\node [circle,fill=black,scale=0.55] (4) at (2, -1) {};
        		\node [circle,fill=black,scale=0.55] (5) at (0, 5) {};
        		\node [circle,fill=black,scale=0.55] (6) at (0, 7) {};
        		\node [circle,fill=black,scale=0.55] (7) at (2, 6) {};
        		\node [circle,fill=black,scale=0.55] (8) at (2, 4) {};
        		\node [circle,fill=black,scale=0.55] (9) at (0, -2) {};
        		\node [circle,fill=black,scale=0.55] (10) at (0, -4) {};
        		\node [circle,fill=black,scale=0.55] (11) at (0, -6) {};
        		\node [circle,fill=black,scale=0.55] (12) at (2, -5) {};
        	\end{pgfonlayer}
        	\begin{pgfonlayer}{edgelayer}
        		\draw (0.center) to (5.center);
        		\draw (5.center) to (7.center);
        		\draw (6.center) to (5.center);
        		\draw (5.center) to (8.center);
        		\draw (9.center) to (0.center);
        		\draw [dashed] (9.center) to (10.center);
        		\draw (10.center) to (11.center);
        		\draw (10.center) to (12.center);
        		\draw (0.center) to (1.center);
        		\draw (1.center) to (3.center);
        		\draw (1.center) to (4.center);
        	\end{pgfonlayer}
        \end{tikzpicture}

         \caption{Proper 2-thin representation}
     \end{subfigure}
     
     \centering
     \begin{subfigure}[b]{0.4\textwidth}
         \centering
         \begin{tikzpicture}[scale=0.35]
        	\begin{pgfonlayer}{nodelayer}
        		\node [circle,fill=black,scale=0.55] (0) at (0, 0) {};
        		\node [circle,fill=black,scale=0.55] (1) at (4, 0) {};
        		\node [circle,fill=black,scale=0.55] (2) at (6, 0) {};
        		\node [circle,fill=black,scale=0.55] (3) at (5.5, 1.5) {};
        		\node [circle,fill=black,scale=0.55] (4) at (5.5, -1.5) {};
        		\node [circle,fill=black,scale=0.55] (5) at (-4, 0) {};
        		\node [circle,fill=black,scale=0.55] (6) at (-6, 0) {};
        		\node [circle,fill=black,scale=0.55] (7) at (-5.5, 1.5) {};
        		\node [circle,fill=black,scale=0.55] (8) at (-5.5, -1.5) {};
        		\node [circle,fill=black,scale=0.55] (9) at (0, -2) {};
        		\node [circle,fill=black,scale=0.55] (10) at (0, -4) {};
        		\node [circle,fill=black,scale=0.55] (11) at (-1.5, -5.5) {};
        	\end{pgfonlayer}
        	\begin{pgfonlayer}{edgelayer}
        		\draw (0.center) to (5.center);
        		\draw (5.center) to (7.center);
        		\draw (6.center) to (5.center);
        		\draw (5.center) to (8.center);
        		\draw (9.center) to (0.center);
        		\draw [dashed] (9.center) to (10.center);
        		\draw (10.center) to (11.center);
        		\draw (0.center) to (1.center);
        		\draw (1.center) to (3.center);
        		\draw (1.center) to (2.center);
        		\draw (1.center) to (4.center);
        	\end{pgfonlayer}
        \end{tikzpicture}
         \caption{Proper subgraph of $T_4^{i}$}
     \end{subfigure}
     \hfill
     \begin{subfigure}[b]{0.4\textwidth}
         \centering
        \begin{tikzpicture}[scale=0.35,rotate=270]
        	\begin{pgfonlayer}{nodelayer}
        		\node [circle,fill=black,scale=0.55] (0) at (0, 0) {};
        		\node [circle,fill=black,scale=0.55] (1) at (0, 2) {};
        		\node [circle,fill=black,scale=0.55] (2) at (2, 3) {};
        		\node [circle,fill=black,scale=0.55] (3) at (0, 4) {};
        		\node [circle,fill=black,scale=0.55] (4) at (2, 1) {};
        		\node [circle,fill=black,scale=0.55] (5) at (0, -6) {};
        		\node [circle,fill=black,scale=0.55] (6) at (2, -7) {};
        		\node [circle,fill=black,scale=0.55] (7) at (2, -8.5) {};
        		\node [circle,fill=black,scale=0.55] (8) at (0, -9) {};
        		\node [circle,fill=black,scale=0.55] (9) at (2, -1) {};
        		\node [circle,fill=black,scale=0.55] (10) at (2, -3) {};
        		\node [circle,fill=black,scale=0.55] (11) at (2, -5) {};
        	\end{pgfonlayer}
        	\begin{pgfonlayer}{edgelayer}
        		\draw (0.center) to (5.center);
        		\draw (5.center) to (7.center);
        		\draw (6.center) to (5.center);
        		\draw (5.center) to (8.center);
        		\draw (9.center) to (0.center);
        		\draw [dashed] (9.center) to (10.center);
        		\draw (10.center) to (11.center);
        		\draw (0.center) to (1.center);
        		\draw (1.center) to (3.center);
        		\draw (1.center) to (2.center);
        		\draw (1.center) to (4.center);
        	\end{pgfonlayer}
        \end{tikzpicture}

         \caption{Proper 2-thin representation}
     \end{subfigure}

        \caption{Proper subgraphs of $T_4^{i}$}
        \label{fig:T_4}
\end{figure}

\item $T_5^{i,j}$: The only proper subgraphs that are not a proper subgraph of any other graph are:

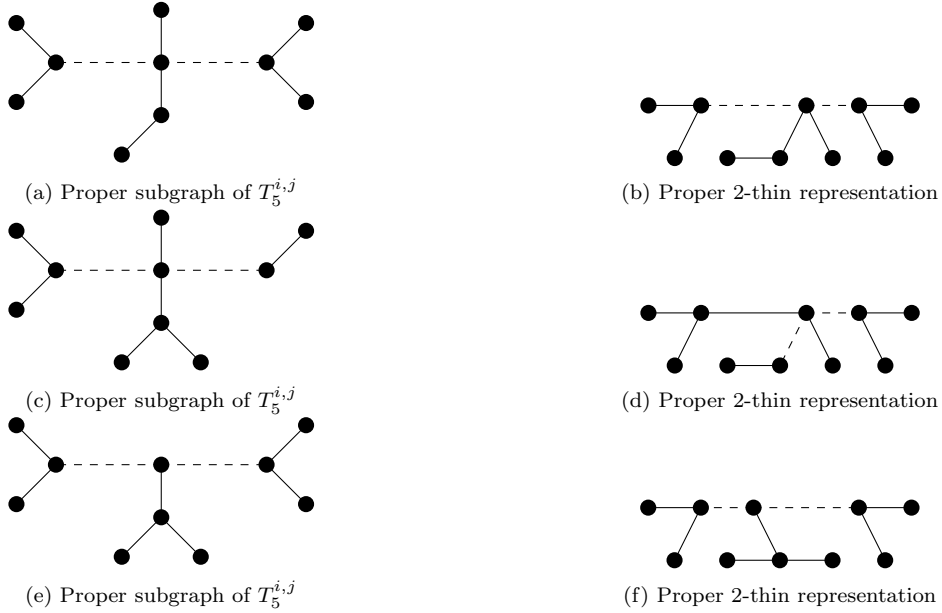
\begin{figure}[H]
     \centering
     \begin{subfigure}[b]{0.4\textwidth}
         \centering
         \begin{tikzpicture}[scale=0.35]
        	\begin{pgfonlayer}{nodelayer}
        		\node [circle,fill=black,scale=0.55] (0) at (0, 0) {};
        		\node [circle,fill=black,scale=0.55] (1) at (4, 0) {};
        		\node [circle,fill=black,scale=0.55] (2) at (5.5, 1.5) {};
        		\node [circle,fill=black,scale=0.55] (3) at (5.5, -1.5) {};
        		\node [circle,fill=black,scale=0.55] (4) at (-4, 0) {};
        		\node [circle,fill=black,scale=0.55] (5) at (-5.5, 1.5) {};
        		\node [circle,fill=black,scale=0.55] (6) at (-5.5, -1.5) {};
        		\node [circle,fill=black,scale=0.55] (7) at (0, -2) {};
        		\node [circle,fill=black,scale=0.55] (8) at (-1.5, -3.5) {};
        		\node [circle,fill=black,scale=0.55] (10) at (0, 2) {};
        	\end{pgfonlayer}
        	\begin{pgfonlayer}{edgelayer}
        		\draw [dashed] (0.center) to (4.center);
        		\draw (5.center) to (4.center);
        		\draw (4.center) to (6.center);
        		\draw (0.center) to (7.center);
        		\draw (7.center) to (8.center);
        		\draw [dashed] (0.center) to (1.center);
        		\draw (1.center) to (2.center);
        		\draw (1.center) to (3.center);
        		\draw (0.center) to (10.center);
        	\end{pgfonlayer}
         \end{tikzpicture}
         \caption{Proper subgraph of $T_5^{i,j}$}
     \end{subfigure}
     \hfill
     \begin{subfigure}[b]{0.4\textwidth}
         \centering
         \begin{tikzpicture}[scale=0.35,rotate=270]
        	\begin{pgfonlayer}{nodelayer}
        		\node [circle,fill=black,scale=0.55] (0) at (0, 0) {};
        		\node [circle,fill=black,scale=0.55] (1) at (0, 2) {};
        		\node [circle,fill=black,scale=0.55] (2) at (0, 4) {};
        		\node [circle,fill=black,scale=0.55] (3) at (2, 3) {};
        		\node [circle,fill=black,scale=0.55] (4) at (0, -4) {};
        		\node [circle,fill=black,scale=0.55] (5) at (2, -5) {};
        		\node [circle,fill=black,scale=0.55] (6) at (0, -6) {};
        		\node [circle,fill=black,scale=0.55] (7) at (2, -1) {};
        		\node [circle,fill=black,scale=0.55] (9) at (2, -3) {};
        		\node [circle,fill=black,scale=0.55] (10) at (2, 1) {};
        	\end{pgfonlayer}
        	\begin{pgfonlayer}{edgelayer}
        		\draw [dashed] (0.center) to (4.center);
        		\draw (5.center) to (4.center);
        		\draw (4.center) to (6.center);
        		\draw (0.center) to (7.center);
        		\draw (7.center) to (9.center);
        		\draw [dashed] (0.center) to (1.center);
        		\draw (1.center) to (2.center);
        		\draw (1.center) to (3.center);
        		\draw (0.center) to (10.center);
        	\end{pgfonlayer}
        \end{tikzpicture}

         \caption{Proper 2-thin representation}
     \end{subfigure}
     \hfill
     \begin{subfigure}[b]{0.4\textwidth}
         \centering
         \begin{tikzpicture}[scale=0.35]
        	\begin{pgfonlayer}{nodelayer}
        		\node [circle,fill=black,scale=0.55] (0) at (0, 0) {};
        		\node [circle,fill=black,scale=0.55] (1) at (4, 0) {};
        		\node [circle,fill=black,scale=0.55] (2) at (5.5, 1.5) {};
        		\node [circle,fill=black,scale=0.55] (4) at (-4, 0) {};
        		\node [circle,fill=black,scale=0.55] (5) at (-5.5, 1.5) {};
        		\node [circle,fill=black,scale=0.55] (6) at (-5.5, -1.5) {};
        		\node [circle,fill=black,scale=0.55] (7) at (0, -2) {};
        		\node [circle,fill=black,scale=0.55] (8) at (-1.5, -3.5) {};
        		\node [circle,fill=black,scale=0.55] (9) at (1.5, -3.5) {};
        		\node [circle,fill=black,scale=0.55] (10) at (0, 2) {};
        	\end{pgfonlayer}
        	\begin{pgfonlayer}{edgelayer}
        		\draw [dashed] (0.center) to (4.center);
        		\draw (5.center) to (4.center);
        		\draw (4.center) to (6.center);
        		\draw (0.center) to (7.center);
        		\draw (7.center) to (8.center);
        		\draw (7.center) to (9.center);
        		\draw [dashed] (0.center) to (1.center);
        		\draw (1.center) to (2.center);
        		\draw (0.center) to (10.center);
        	\end{pgfonlayer}
        \end{tikzpicture}
         \caption{Proper subgraph of $T_5^{i,j}$}
     \end{subfigure}
     \hfill
     \begin{subfigure}[b]{0.4\textwidth}
         \centering
         \begin{tikzpicture}[scale=0.35,rotate=270]
        	\begin{pgfonlayer}{nodelayer}
        		\node [circle,fill=black,scale=0.55] (0) at (0, 0) {};
        		\node [circle,fill=black,scale=0.55] (1) at (0, 2) {};
        		\node [circle,fill=black,scale=0.55] (2) at (0, 4) {};
        		\node [circle,fill=black,scale=0.55] (3) at (2, 3) {};
        		\node [circle,fill=black,scale=0.55] (4) at (2, -1) {};
        		\node [circle,fill=black,scale=0.55] (5) at (2, -3) {};
        		\node [circle,fill=black,scale=0.55] (7) at (0, -4) {};
        		\node [circle,fill=black,scale=0.55] (8) at (0, -6) {};
        		\node [circle,fill=black,scale=0.55] (9) at (2, -5) {};
        		\node [circle,fill=black,scale=0.55] (10) at (2, 1) {};
        	\end{pgfonlayer}
        	\begin{pgfonlayer}{edgelayer}
        		\draw [dashed] (0.center) to (4.center);
        		\draw (5.center) to (4.center);
        		\draw (0.center) to (7.center);
        		\draw (7.center) to (8.center);
        		\draw (7.center) to (9.center);
        		\draw [dashed] (0.center) to (1.center);
        		\draw (1.center) to (2.center);
        		\draw (1.center) to (3.center);
        		\draw (0.center) to (10.center);
        	\end{pgfonlayer}
        \end{tikzpicture}

         \caption{Proper 2-thin representation}
     \end{subfigure}
     \hfill
     \begin{subfigure}[b]{0.4\textwidth}
         \centering
         \begin{tikzpicture}[scale=0.35]
        	\begin{pgfonlayer}{nodelayer}
        		\node [circle,fill=black,scale=0.55] (0) at (0, 0) {};
        		\node [circle,fill=black,scale=0.55] (1) at (4, 0) {};
        		\node [circle,fill=black,scale=0.55] (2) at (5.5, 1.5) {};
        		\node [circle,fill=black,scale=0.55] (3) at (5.5, -1.5) {};
        		\node [circle,fill=black,scale=0.55] (4) at (-4, 0) {};
        		\node [circle,fill=black,scale=0.55] (5) at (-5.5, 1.5) {};
        		\node [circle,fill=black,scale=0.55] (6) at (-5.5, -1.5) {};
        		\node [circle,fill=black,scale=0.55] (7) at (0, -2) {};
        		\node [circle,fill=black,scale=0.55] (8) at (-1.5, -3.5) {};
        		\node [circle,fill=black,scale=0.55] (9) at (1.5, -3.5) {};
        	\end{pgfonlayer}
        	\begin{pgfonlayer}{edgelayer}
        		\draw [dashed] (0.center) to (4.center);
        		\draw (5.center) to (4.center);
        		\draw (4.center) to (6.center);
        		\draw (0.center) to (7.center);
        		\draw (7.center) to (8.center);
        		\draw (7.center) to (9.center);
        		\draw [dashed] (0.center) to (1.center);
        		\draw (1.center) to (2.center);
        		\draw (1.center) to (3.center);
        	\end{pgfonlayer}
        \end{tikzpicture}

         \caption{Proper subgraph of $T_5^{i,j}$}
     \end{subfigure}
     \hfill
     \begin{subfigure}[b]{0.4\textwidth}
         \centering
         \begin{tikzpicture}[scale=0.35,rotate=270]
        	\begin{pgfonlayer}{nodelayer}
        		\node [circle,fill=black,scale=0.55] (0) at (0, 0) {};
        		\node [circle,fill=black,scale=0.55] (1) at (0, 4) {};
        		\node [circle,fill=black,scale=0.55] (2) at (0, 6) {};
        		\node [circle,fill=black,scale=0.55] (3) at (2, 5) {};
        		\node [circle,fill=black,scale=0.55] (4) at (2, 1) {};
        		\node [circle,fill=black,scale=0.55] (5) at (2, 3) {};
        		\node [circle,fill=black,scale=0.55] (6) at (2, -1) {};
        		\node [circle,fill=black,scale=0.55] (7) at (0, -2) {};
        		\node [circle,fill=black,scale=0.55] (8) at (0, -4) {};
        		\node [circle,fill=black,scale=0.55] (9) at (2, -3) {};
        	\end{pgfonlayer}
        	\begin{pgfonlayer}{edgelayer}
        		\draw (0.center) to (4.center);
        		\draw (5.center) to (4.center);
        		\draw (4.center) to (6.center);
        		\draw [dashed] (0.center) to (7.center);
        		\draw (7.center) to (8.center);
        		\draw (7.center) to (9.center);
        		\draw [dashed] (0.center) to (1.center);
        		\draw (1.center) to (2.center);
        		\draw (1.center) to (3.center);
        	\end{pgfonlayer}
        \end{tikzpicture}

         \caption{Proper 2-thin representation}
     \end{subfigure}

        \caption{Proper subgraphs of $T_5^{i,j}$}
        \label{fig:T_5}
\end{figure}

\end{itemize}
\end{proof}

\subsection{Proof of the proposition about the counterexample $T_A$}

\begin{proof}[Proof of Proposition~\ref{prop:path1}]
Suppose (w.l.o.g.) that $v_0 \in V^0$. Let us see that the subgraphs induced by $V^0$, $V^1$ and $V^2$ are not connected:
\begin{itemize}
\item $V^0$: by Proposition~\ref{prop:same_class_neighbors}, there are at most two vertices adjacent to $v_0$ in $V^0$. Therefore, there exists some of the adjacent vertices to $v_0$ in one of the other classes, let us call that vertex $v_1$, suppose w.l.o.g. that $v_1 \in V^1$. Let $w$ be a vertex in the same subtree of $v_1$ such that $w \in V^0$ (we know it exists by Remark~\ref{obs:one_in_each_class}). But then the only simple path from $w$ to $v_0$ passes through $v_1$, so $T_A[V^0]$ is not connected.
\item $V^1$: By Observation~\ref{obs:one_in_each_class}, we know that there exist two vertices $w_1$ and $w_2$ that belong to two distinct subtrees and are in class $V^1$, but the only simple path between these vertices passes through $v_0$, which belongs to $V^0$, so the subgraph induced by $V^1$ is not connected.
\item $V^2$: equivalent to the previous item, it is proved that the subgraph induced by $V^2$ is not connected.
\end{itemize}
Now let $C_0$ be a simple path. If $C_0$ has at least one vertex in each of the three $T_0$ subtrees that $T_A$ has, then $v_0 \in C_0$ would also hold, and so would its three adjacent vertices, which contradicts that $C_0$ is a simple path.
Therefore, some copies of $T_0$ are contained in $T_A - C_0$. By Theorem~\ref{theorem:forbidden_subtrees}, we know that $\pthin(T_A - C_0) = 3$.
\end{proof}









\end{document}